\newtheorem{theorem}{Theorem}[section]
\newtheorem{proposition}[theorem]{Proposition}
\newtheorem{lemma}[theorem]{Lemma}
\theoremstyle{definition}
\newtheorem{example}[theorem]{Example}
\newtheorem{definition}[theorem]{Definition}
\newtheorem*{condS}{Condition~\bf{S}}
\newcommand{\II}{{\mathbb I}} 
\newcommand{\RR}{{\mathbb R}} 
\newcommand{\NN}{{\mathbb N}} 
\newcommand{\ZZ}{{\mathbb Z}} 
\def\cint#1{\left[{#1}\right]} 
\def\rint#1{\left]{#1}\right]} 
\def\opint#1{\left]{#1}\right[} 
\def\uint{\left[0,1\right]} 
\def\ouint{\left]0,1\right[} 
\newcommand{\R}{\mathcal R} 
\newcommand{\D}{\mathcal D} 
\newcommand{\POk}{P_{\mathrm{O},k}} 
\newcommand{\POkD}{P_{\mathrm{O},k,\D}} 
\newcommand{\PMk}{P_{\mathrm{M},k}} 
\newcommand{\PMkD}{P_{\mathrm{M},k,\D}} 
\newcommand{\gkD}{\gamma_{k,\D}} 
\newcommand{\dkD}{\delta_{k,\D}} 
\newcommand{\x}{{\mathbf x}}
\newcommand{\y}{{\mathbf y}}
\newcommand{\dd}{{\mathbf d}}
\newcommand{\vv}{{\mathbf v}}
\newcommand{\uu}{{\mathbf u}}
\newcommand{\ver}{\operatorname{ver}} 
\newcommand{\sign}{\operatorname{sign}} 
\newcommand{\Lk}[1]{L_k^{(#1)}} 
\newcommand{\LkAB}{\Lk{A,B}} 
\newcommand{\LkCB}{\Lk{C,B}} 
\newcommand{\LkAC}{\Lk{A,C}} 
\newcommand{\skb}{{R^*}} 
\newcommand{\dub}{\mathbf R} 
\newcommand{\POkDCB}{P_{\mathrm{O},k,\D}^{(C,B)}} 
\newcommand{\PMkDAC}{P_{\mathrm{M},k,\D}^{(A,C)}} 
 \newcommand{\1}{{\mathbf 1}}
\numberwithin{equation}{section}
\title[Consistency results for standardized functions and semicopulas]{Coherence and avoidance of sure loss for standardized functions and semicopulas}
\author[{\fontsize{7.5}{9}\selectfont E. P. Klement}]{Erich Peter Klement \orcidlink{0000-0002-2054-3286} 
}
\address{Johannes Kepler University Linz, Institute for Mathematical Methods in Medicine and Data Based Modeling, Linz, Austria}
\email{ep.klement@jku.at}
\author[D. Kokol Bukovšek]{Damjana Kokol Bukovšek \orcidlink{0000-0002-0098-6784} }
\address{University of Ljubljana, School of Economics and Business, and Institute of Mathematics, Physics and Mechanics, Ljubljana, Slovenia}
\email{Damjana.Kokol.Bukovsek@ef.uni-lj.si}
\author[B. Mojškerc]{Blaž Mojškerc \orcidlink{0000-0001-8096-355X} }
\address{University of Ljubljana, School of Economics and Business, and Institute of Mathematics, Physics and Mechanics, Ljubljana, Slovenia}
\email{Blaz.Mojskerc@ef.uni-lj.si}
\author[M. Omladič]{Matjaž Omladič \orcidlink{0000-0001-5383-9203} }
\address{Institute of Mathematics, Physics and Mechanics, Ljubljana, Slovenia}
\email{Matjaz@Omladic.net}
\author[S. Saminger-Platz]{Susanne Saminger-Platz \orcidlink{0000-0002-9606-5751} } 
\address{ Johannes Kepler University Linz, Institute for Mathematical Methods in Medicine and Data Based Modeling, Linz, Austria}
\email{Susanne.Saminger-Platz@jku.at}
\author[N. Stopar]{Nik Stopar \orcidlink{0000-0002-0004-4957} } 
\address{University of Ljubljana, Faculty of Civil and Geodetic Engineering, and Institute of Mathematics, Physics and Mechanics, Ljubljana, Slovenia}
\email{Nik.Stopar@fgg.uni-lj.si}
\keywords{Copula, quasi-copula, semicopula, standardized function, coherence, avoidance of sure loss, $k$-increasing function}
\subjclass[2020]{60E05, 62H05}
\begin{document}

\begin{abstract}
We discuss avoidance of sure loss and coherence results for semicopulas and standardized functions, i.e., for grounded, 1-increasing functions with value $1$ at $(1,1,\ldots, 1)$. We characterize the existence of $k$-increasing $n$-variate function $C$ fulfilling $A\leq C\leq B$ for standardized $n$-variate functions $A,B$ and discuss the method for constructing this function. Our proofs also include procedures for extending functions on some countably infinite mesh to  functions on the unit box. We provide a characterization when $A$ respectively $B$ coincides with the pointwise infimum respectively supremum of the set of all $k$-increasing $n$-variate functions $C$ fulfilling $A\leq C\leq B$.
\end{abstract}

\maketitle

\section{Introduction and motivation}\label{se:intro}

In recent literature on statistical reasoning, imprecise probabilities have become one of the main tools for modeling uncertainty,
especially in situations when the use of a precise probability model may be questionable or the exact assessment of probability of events impossible.
This is often the case in decision making with vague, incomplete, or even conflicting information \cite{MonMirMon14a}, and in risk management \cite{ArtDelEbeHea99}. The general theory of imprecise probability \cite{Wal91,AugCooDeCTro14} offers a variety of different models for dealing with imprecision such as lower and upper previsions, lower and upper probabilities, probability boxes, distortion probabilities, capacities, and several others, \cite{Schm23,MonMirPelVic15,MonMirDes20,Sca96}.

An imprecise model is typically required to satisfy some reasonable consistency conditions.
Two such conditions are avoidance of sure loss and coherence that were first introduced for lower and upper probabilities \cite{Wal81} and for lower and upper previsions \cite{Wil07}. In the behavioural interpretation, avoidance of sure loss means that a gambler's assessments of events should not lead to acceptance of bets that would produce net loss, regardless of the outcome.
Coherence, on the other hand, suggests that, given a set of acceptable bets, a gambler should also accept any positive linear combination of these bets.
A major difference between the precise and imprecise setting is that lower and upper probabilities are not additive functions. Instead, they are generally at least monotone with respect to set inclusion, i.e., they are \emph{capacities}.
Capacities as a generalization of additive measures were introduced by Choquet in~\cite{Cho54} (see also~\cite{GilSch95} and note that the original definition given there is less general than the one used nowadays). Together with semicopulas they give rise to the framework of \emph{universal integrals}~\cite{KleMesPap10} providing a common frame for many non-additive integrals~\cite{Den94}, 
including the well-known Choquet integral~\cite{Cho54,GilSch94} and  Sugeno integral \cite{Sug74,SugMur87,MurSug91}. 

Avoidance of sure loss and coherence were recently translated to the setting of cumulative distribution functions 
\cite{PelVicMonMir16}, where imprecision is typically modeled with a probability box, i.e., a set of cumulative distribution functions bounded pointwise from above and from below.

The two notions can also be considered for copulas, which motivated the introduction of imprecise copulas \cite{PelVicMonMir13} as boxes of copulas bounded by two quasi-copulas $\underline{C}$ and $\overline{C}$. 
In this setting, avoidance of sure loss is equivalent to the existence of a copula $C$ satisfying $\underline{C} \le C \le \overline{C}$, where $\le$ denotes the pointwise order,
while coherence is equivalent to the two conditions
\begin{align*}
\sup\{C\mid\underline{C} \le C \le \overline{C}\text{ and } C \text{ is a copula}\} & =\overline{C}, \\
\inf\{C \mid \underline{C} \le C \le \overline{C} \text{ and } C \text{ is a copula}\} & =\underline{C}.
\end{align*}
Favourable topological properties of copulas facilitated the application of discretization techniques
that eventually led to a characterization of the two notions \cite{OmlSto20a,OmlSto22}, given solely in terms of the bounding functions $\underline{C}$ and $\overline{C}$.  The newly developed method, now called the ALGEN method \cite{OmlSto22a}, was later extended to distribution functions and applied to give a description of probability boxes in terms of coherent imprecise copulas \cite{OmlSto20b,OmlSto22}.

It is natural to ask whether this new characterization can now be translated back to the theory of lower and upper probabilities and previsions.
This is one of the motivations for the present paper.
We generalize the ALGEN method to the case where the bounds need not be quasi-copulas.
Instead, they are only assumed to be grounded, $1$-increasing, and have value $1$ at $(1,1,\dots,1)$. We will call such functions \emph{standardized} functions by analogy with \cite{PelVicMonMir16}, where standardized functions were introduced in the setting of distributions. This will allow for our results to be used in the theory of multivariate probability boxes and thus provide a stepping stone towards applications in lower and upper previsions.
In particular, our results can be applied to \emph{semicopulas} (which include distribution functions of capacities with uniform margins \cite{Sca96}, and the representing functions of the envelopes of compatible families of continuous distribution functions \cite{Sto23}), in which case we can omit one of the technical assumptions and also obtain an additional characterization for coherence.
The term semicopula was used for the first time by Bassan and Spizzichino~\cite{BasSpi05} in a statistical context. Semicopulas have been known, in a different context, as \emph{conjunctors} (monotone extensions of the Boolean conjunction with neutral element $1$)~\cite{DurKleMesSem07} or as \emph{t-seminorms}~\cite{SuaGil86}. For (structural) properties of the class of semicopulas  see~\cite{DurMesPap08,DurSem05,DurQueSem06}.

Furthermore, we also adapt the method to work for classes of functions other than copulas. In particular, we focus on the classes of $k$-increasing $n$-quasi-copulas. With $k=2$ this includes the class of supermodular $n$-quasi-copulas. The role of $k$-increasing $n$-quasi-copulas (especially for $k=n-1$) has been investigated in \cite{AriMesDeB17} (see also \cite{AriMesDeB20}), while the importance of supermodular, sometimes also-called $L$-superadditive, functions has long been recognized, see e.g. \cite{AriDeB19,MarOlk79,MueSca00,MarMon05,MarMon08,KleKolMesSam17,SamDibKleMes17,SamKolMesKle20} and the references therein.

The structure of the paper is as follows. In Section \ref{se:preliminaries} we give the necessary definitions and basic properties that will be used throughout the paper.  We state our main results on avoidance of sure loss for standardized functions and semicopulas in Section~\ref{se:maintheorem} and give their proofs in Sections~\ref{se:CbyraisingA}--\ref{se:proofofmaintheorem}. Given two specific functions $A \le B$ we construct a $k$-increasing function $C$ between them on a dense countably infinite mesh by modifying the lower bound~$A$ in Section~\ref{se:CbyraisingA} and by modifying the upper bound~$B$ in Section~\ref{se:CbyloweringB_short}. We extend the function~$C$ to the full unit cube in Section~\ref{se:extension} and collect our findings to conclude the proofs of the main results in Section~\ref{se:proofofmaintheorem}.  Section~\ref{se:cogerence} is dedicated to results on coherence. 

\section{Notions and basic properties}\label{se:preliminaries}

\subsection{On $k$-boxes, multiplicities, and related properties}\label{sse:prel:kboxes_properties}

Throughout the paper we shall denote the unit interval by $\II =\uint$ and we will abbreviate the set $\{1, 2, \dots, n\}$ by $[n]$, where $n$ is an arbitrary positive integer which will be fixed for the whole paper. We also denote the points  $(0,0,\dots,0) \in \II^n$ by $\mathbf{0}$ and $(1,1,\dots,1) \in \II^n$ by $\mathbf{1}$. We will use the terms \emph{increasing} to mean non-decreasing and \emph{decreasing} to mean non-increasing.

\begin{definition}
Choose $k\in\NN$ such that $k\in [n]$. Let $\x=(x_1, x_2, \dots, x_n) \in \II^n$ and $\y=(y_1, y_2, \dots, y_n) \in \II^n$ be two points. A Cartesian product of $n$ closed intervals, i.e., a set of the form
\begin{equation*}
    \cint{\x,\y}=\cint{x_1,y_1}\times\cint{x_2,y_2}\times\dots\times\cint{x_n,y_n}
\end{equation*}
will be called a \emph{$k$-box} if 
    $\bigl|\{i\in[n]\mid x_i<y_i\}\bigr|=k$ and
    $|\{i\in[n]\mid x_i=y_i\}|=n-k$.

The \emph{vertices} of a $k$-box $R=\cint{\x,\y}$ will be denoted by
\begin{equation*}
    \ver R=\ver\cint{\x,\y}=\{x_1,y_1\}\times\{x_2,y_2\}\times\dots\times\{x_n,y_n\}.
\end{equation*}
Putting $m=|\{i\in[n]\mid v_i=x_i\}|$, the \emph{sign} of a vertex $\vv$ of a $k$-box $R=\cint{\x,\y}$ is defined by
\begin{equation*}
    \sign_R(\vv)=(-1)^{m-(n-k)}.
\end{equation*}
The \emph{multiplicity} of an arbitrary point $\uu\in\II^n$ with respect to a $k$-box $R$ is given by
\begin{equation*}
m_R(\uu)=
\begin{cases}
     \sign_R(\uu) & \text{if } \uu \in \ver R,\\
     0 & \text{otherwise.}
\end{cases}
\end{equation*}
\end{definition}

Note that, given a $k$-box $R=\cint{\x,\y}$, for each vertex $\vv$ we have $n-k\le m\le n$. In particular, $\sign_R(\y)=1$, since $m=n-k$ in this case, and  $\sign_R(\x)=(-1)^k$, since $m=n$.

We denote by $\R_k(\II^n)$ the set of all \emph{finite disjoint unions} of $k$-boxes with vertices in $\II^n$.
This means that a typical element $\dub\in\R_k(\II^n)$ is of the form $\dub=\bigsqcup_{j=1}^s R_j$,
where $\{R_j\}_{j=1}^{s}$ is an arbitrary finite family (multi-set) of $k$-boxes with vertices in $\II^n$ and $\bigsqcup$ denotes the formal disjoint union.
We extend the definition of the multiplicity of points from a $k$-box to a finite disjoint union of $k$-boxes $\dub=\bigsqcup_{j=1}^s R_j$ by putting for each $\uu \in \II^n$
\begin{equation*}
m_\dub(\uu)=\sum_{j=1}^s m_{R_j}(\uu).
\end{equation*}
Observe that the \emph{disjoint unions} here are \emph{formal disjoint unions}, i.e., a priori the $k$-boxes~$R_j$ need not be disjoint, we just treat them as such when calculating the multiplicities.

\begin{example}\label{ex:2-boxes}
Figure~\ref{fig:2_box_union} depicts two examples of disjoint unions of $2$-boxes in $\II^3$.
On the left we have a disjoint union of four $2$-boxes, namely, $ABLK$, $CFJG$, $MNTS$, and $OQTR$.
So this union is comprised of two overlapping pairs of boxes that are disconnected from each other.
The multiplicity of the point $T$ is $2$ since it is a vertex of two boxes with corresponding multiplicity $1$.
The points $D$, $E$, $I$, $H$, and $P$ have multiplicity $0$ because they are not vertices of any of the boxes.
All other points have multiplicity $-1$ or $1$.
Note that this union can also be interpreted as a union of ten $2$-boxes, namely, $ABED$, $CDHG$, two copies of $DEIH$, $EFJI$, $HILK$, $MNQP$, $OPSR$, and two copies of $PQTS$.
The multiplicities of the points remain the same in this interpretation. This is always the case, because cutting a box into two has no affect on the multiplicities of its points (the multiplicities cancel where the cut is made).

On the right we have a disjoint union of five $2$-boxes, namely, $ABMK$, $CDHF$, $DHNI$, $JKML$, and $MNPO$, two of them are crossing.
Note that at some points the multiplicities are added, while at others they cancel.
In particular, the point $M$ has multiplicity $3$ and the point $K$ has multiplicity $-2$, while the points $D,H$, and $N$ have multiplicity $0$ due to cancellation.
Furthermore, the points $E$ and $G$ have multiplicity $0$ since they are no vertices. All other points have multiplicity $-1$ or $1$.
\end{example}

\begin{figure}
    \centering
    \includegraphics{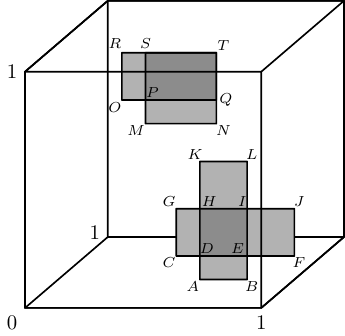} \qquad
    \includegraphics{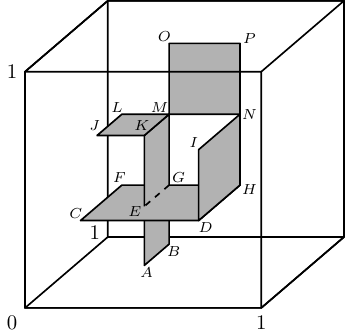}
    \caption{Two examples of a disjoint union of $2$-boxes in $\II^3$ (see Example~\ref{ex:2-boxes}).}
    \label{fig:2_box_union}
\end{figure}

For a $k$-box $R=\cint{\x,\y}$ we have $m_R(\uu)\in\{-1,0, 1\}$ for each $\uu\in\II^n$. Notice that this can also hold for a disjoint union of several $k$-boxes; however, this is a very special case.
In fact, the multiplicity of a point with respect to a finite disjoint union of $k$-boxes can be any integer, as the following lemma shows. 

\begin{lemma}\label{le:KBoxesWithMRFromZ2}
    Let $\x \in \II^n$ be any point which is not a vertex of $\II^n$ and fix some integer $k\in[n]$. 
    Then for every $z\in\ZZ$ there exists a finite disjoint union of $k$-boxes $\dub \in \R_k(\II^n)$ such that $m_{\dub}(\x)=z$.
\end{lemma}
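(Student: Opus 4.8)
The plan is to reduce the whole statement to the construction of two $k$-boxes $R^{+}$ and $R^{-}$ (with vertices in $\II^{n}$) each having $\x$ as a vertex, but with $\sign_{R^{+}}(\x)=1$ and $\sign_{R^{-}}(\x)=-1$. Granting these, the proof finishes at once: for $z>0$ I would take $\dub=\bigsqcup_{j=1}^{z}R^{+}$, for $z<0$ take $\dub=\bigsqcup_{j=1}^{-z}R^{-}$, and for $z=0$ take $\dub=R^{+}\sqcup R^{-}$. Since multiplicities add over a formal disjoint union, and since repeating a box is legitimate (the unions are formal), in each case $m_{\dub}(\x)=z$.

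To build $R^{+}$ and $R^{-}$, I would first use the hypothesis: since $\x$ is not a vertex of $\II^{n}$, there is an index $i_{0}$ with $0<x_{i_{0}}<1$. Fix any $k$-element set $S\subseteq[n]$ with $i_{0}\in S$ (possible since $1\le k\le n$), and form a box $\cint{\aa,\bb}$ by setting $a_{i}=b_{i}=x_{i}$ for $i\notin S$ (degenerate coordinates) and, for each $i\in S$, making the $i$-th coordinate non-degenerate with $\x$ lying on one of its two faces: either $a_{i}=x_{i}$ together with some $b_{i}\in\opint{x_{i},1}$ (possible when $x_{i}<1$), or $b_{i}=x_{i}$ together with some $a_{i}\in\opint{0,x_{i}}$ (possible when $x_{i}>0$). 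Then $\cint{\aa,\bb}$ is a $k$-box and $\x\in\ver\cint{\aa,\bb}$.

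The sign is then controlled as follows. If exactly $\ell$ of the $k$ non-degenerate coordinates are chosen with $\x$ on the lower face (i.e. $a_{i}=x_{i}$), then the integer $m$ from the definition of the sign, evaluated at the vertex $\x$, equals $(n-k)+\ell$, so $\sign_{\cint{\aa,\bb}}(\x)=(-1)^{\ell}$. I would choose the coordinates in $S\setminus\{i_{0}\}$ in any admissible way (forced to the lower face when $x_{i}=0$, to the upper face when $x_{i}=1$, free otherwise), say obtaining $p$ lower-face coordinates among them; then, since $0<x_{i_{0}}<1$, the coordinate $i_{0}$ may be placed on either face, making $\ell\in\{p,p+1\}$ at will. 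Taking $\ell$ even produces $R^{+}$ and $\ell$ odd produces $R^{-}$.

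I do not expect a genuine obstacle here; the only step that needs care is the sign bookkeeping — verifying that the single "interior" coordinate $i_{0}$ indeed provides enough freedom to realize both parities of $\ell$ (which it does, as $p$ and $p+1$ have opposite parity), and checking that the above really is a $k$-box with $\x$ among its vertices. Everything else is a direct unwinding of the definitions of $k$-box, vertex sign, and multiplicity.
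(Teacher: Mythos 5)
Your proposal is correct and takes essentially the same route as the paper: the paper also singles out one interior coordinate $x_1\in\opint{0,1}$ (your $i_0$), builds two $k$-boxes differing only in whether $\x$ sits on the lower or upper face of that coordinate, observes that the two boxes carry opposite signs at $\x$, and then takes $|z|$ formal copies of the appropriate one. Your version is marginally more flexible in allowing an arbitrary $k$-set $S\ni i_0$ and arbitrary admissible face choices for the other coordinates of $S$, and it handles $z=0$ via $R^+\sqcup R^-$ rather than the empty union, but the underlying argument is the same.
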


\begin{proof}
For $z=0$  the conclusion is obvious, so suppose that $z\ne 0$. Since $\x$ is not a vertex of the unit cube $\II^n$, there exists at least one coordinate of $\x$ which equals neither $0$ or $1$, and without loss of generality we may assume $x_1 \in \ouint$. For every $j\in[k]\setminus\{1\}$ put
\begin{equation*} a_j = \begin{cases} 1 & \text{if }   x_j = 0, \\ x_j & \text{if }  x_j > 0. \end{cases} 
\end{equation*}
and define the two $k$-boxes
\begin{align*}
    R' &= \cint{(0, 0, \dots, 0, x_{k+1}, \dots,  x_n), (x_1, a_2, \dots, a_k, x_{k+1}, \dots,  x_n)}, \\[0.5ex]
    R'' &= \cint{(x_1, 0, \dots, 0, x_{k+1}, \dots,  x_n), (1, a_2, \dots, a_k, x_{k+1}, \dots, x_n)}. 
\end{align*}
The point $\x$ is a vertex of both $R'$ and $R''$, so $m_{R'}(\x)\in \{-1, 1\}$ and, since they differ only in the first coordinate, $m_{R''}(\x)= -m_{R'}(\x)$.
If $z\cdot m_{R'}(\x) >0$ put $\dub = \bigsqcup_{s=1}^{|z|} R'$, and  $\dub = \bigsqcup_{s=1}^{|z|} R''$ otherwise. In both cases we have $m_{\dub}(\x)=z$.
\end{proof}

\begin{definition}
Let  $k\in\NN$ be such that $k\in[n]$,
and let $A\colon\II^n\to\II$ be an $n$-variate function. Then
\begin{itemize}
    \item[\textup{(i)}] $A$ is \emph{grounded} if $A(\x)=0$ whenever $x_i=0$ for some $i \in [n]$;
    \item[\textup{(ii)}] $A$ has \emph{uniform marginals} if $A(1,\dots,1,x_i,1,\dots,1)=x_i$ for all $x_i \in \II$ and all $i \in [n]$;
    \item[\textup{(iii)}] $A$ is \emph{$k$-increasing} if for every $k$-box $R=\cint{\x,\y}$
\begin{equation*}
   V_{A,k}(R) := \sum_{\vv \in \ver R} \sign_R(\vv)A(\vv) \ge 0.
\end{equation*}
\end{itemize}
\end{definition}
Since the multiplicities of the vertices are additive, for a $k$-increasing function $A\colon\II^n\to\II$ we also have for any disjoint union of $k$-boxes $\dub\in\R_k(\II^n)$ 
\begin{equation*}
 V_{A,k}(\dub) = \sum_{\vv \in \ver \dub} m_\dub(\vv)A(\vv) \ge 0   
\end{equation*}
Note that a function $A$ is $1$-increasing if and only if it is increasing in each variable. The following types of $1$-increasing functions will be of special interest in our paper: standardized functions, semicopulas \cite{DurQueSem06,DurSem05} and quasi-copulas \cite{AlsNelSch93,AriDeB19,AriMesDeB17,AriMesDeB20,GenQueRodSem99} (for some category-related aspects see, e.g., \cite{DurFerTru16}).

\begin{definition}
An $n$-variate function $A\colon\II^n\to\II$ is called
\begin{itemize}
\item[\textup{(i)}] \emph{standardized} if it is grounded, $1$-increasing, and satisfies $A(\mathbf{1})=1$,
\item[\textup{(ii)}] a \emph{semicopula} if it is grounded, $1$-increasing, and has uniform marginals,
\item[\textup{(iii)}] a \emph{quasi-copula} if it is a $1$-Lipschitz semicopula.
\end{itemize}
\end{definition}

We remark that any nonzero, grounded, $1$-increasing function $A\colon\II^n\to\RR$ can be standardized by dividing it by $A(\mathbf{1})$.

We recall the definitions of avoidance of sure loss and coherence for pairs of bivariate standardized functions adjusted to the case of functions defined on $\II^2$.

\begin{definition}[\cite{PelVicMonMir16}]
    Let $A,B \colon \II^2 \to \II$ be standardized functions with $A \le B$.
    \begin{enumerate}
        \item[\textup{(i)}] Given the pair $(A,B)$, we speak about \emph{avoidance of sure loss}  if there exists a $2$-increasing function $C \colon \II^2 \to \II$ such that $A \le C \le B$.
        \item[\textup{(ii)}] The pair $(A,B)$ is \emph{coherent} if
        \begin{align*}
            A &=\inf\{C \colon \II^2 \to \II \mid C \text{ is $2$-increasing, } A \le C \le B\} \text{ and} \\
            B &=\sup\{C \colon \II^2 \to \II \mid C \text{ is $2$-increasing, } A \le C \le B\}.
        \end{align*}
    \end{enumerate}
\end{definition}

We remark that any $2$-increasing function $C$ that satisfies $A \le C \le B$ is in fact a cumulative distribution function, since it is automatically grounded and satisfies $C(1,1)=1$.
Pairs of standardized functions $(A,B)$ satisfying $A \le B$ are called bivariate \emph{probability boxes}, see \cite{PelVicMonMir13}. Bivariate probability boxes can be constructed using so-called imprecise copulas and marginal univariate probability boxes \cite{PelVicMonMir13}.
An \emph{imprecise copula} is a pair of bivariate quasi-copulas $(A,B)$ satisfying $A \le B$. An imprecise copula avoids sure loss if and only if the interval between~$A$ and~$B$ contains a true copula, and it is coherent if and only if~$A$ equals the infimum (and~$B$ the supremum) of all copulas between $A$ and $B$.

We extend these definitions to $n$-variate standardized functions and $k$-increasing $n$-variate functions (coherence of pairs of $n$-quasi-copulas in the case $k=n$ appeared already in \cite{OmlSto22}).

\begin{definition}\label{def:avoidance}
    Let $A,B \colon \II^n \to \II$ be standardized functions with $A \le B$ and $k\in[n]\setminus\{1\}$. 
    \begin{enumerate}
        \item[\textup{(i)}] Given the pair $(A,B)$, we speak about \emph{$k$-avoidance of sure loss} if there exists a $k$-increasing function $C \colon \II^n \to \II$ such that $A \le C \le B$.
        \item[\textup{(ii)}] The pair $(A,B)$ is \emph{$k$-coherent} if
        \begin{align*}
            A &=\inf\{C \colon \II^n \to \II \mid C \text{ is $k$-increasing, } A \le C \le B\} \text{ and} \\
            B &=\sup\{C \colon \II^n \to \II \mid C \text{ is $k$-increasing, } A \le C \le B\}.
        \end{align*}
    \end{enumerate}
\end{definition}

The following lemma, though summarizing a very basic mathematical fact, will facilitate arguments and readability in the later proofs. 
\begin{lemma}\label{lem:claim-green}
Consider two $n$-variate functions $A,B\colon\II^n\to\II$  with $A\le B$ and fix some integer $k\in[n]$. 
For an arbitrary finite disjoint union of $k$-boxes $\dub\in\R_k(\II^n)$ and an arbitrary $\y\in\II^n$ the following holds: 
\begin{equation*}
\max\{m_{\dub}(\y)A(\y),m_{\dub}(\y)B(\y)\} = m_{\dub}(\y)\cdot\begin{cases}
            A(\y) & \text{if }   m_{\dub}(\y) < 0, \\
            0 & \text{if }  m_{\dub}(\y) = 0, \\
            B(\y) & \text{if }  m_{\dub}(\y) > 0.
            \end{cases}
\end{equation*}
\end{lemma}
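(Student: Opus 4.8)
The plan is to argue by a direct case distinction on the sign of the integer $m_{\dub}(\y)$, since the asserted identity is really just the statement that the larger of two numbers $m_{\dub}(\y)A(\y)$ and $m_{\dub}(\y)B(\y)$ depends only on whether the common scalar factor $m_{\dub}(\y)$ is positive, negative, or zero. Note first that $m_{\dub}(\y)\in\ZZ$ by definition (it is a finite sum of signs $\pm 1$), so exactly one of the three cases applies.

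If $m_{\dub}(\y)=0$, then $m_{\dub}(\y)A(\y)=m_{\dub}(\y)B(\y)=0$, so the maximum equals $0$, which is exactly $m_{\dub}(\y)\cdot 0$; this matches the middle branch of the right-hand side. If $m_{\dub}(\y)>0$, then multiplying the inequality $A(\y)\le B(\y)$ by the positive number $m_{\dub}(\y)$ preserves it, giving $m_{\dub}(\y)A(\y)\le m_{\dub}(\y)B(\y)$, so the maximum is $m_{\dub}(\y)B(\y)=m_{\dub}(\y)\cdot B(\y)$, matching the third branch. Finally, if $m_{\dub}(\y)<0$, multiplying $A(\y)\le B(\y)$ by the negative number $m_{\dub}(\y)$ reverses it, giving $m_{\dub}(\y)A(\y)\ge m_{\dub}(\y)B(\y)$, so the maximum is $m_{\dub}(\y)A(\y)=m_{\dub}(\y)\cdot A(\y)$, matching the first branch. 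In all three cases the two sides of the claimed equality agree, which completes the argument.

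There is no real obstacle here: the only hypothesis used is $A\le B$ (pointwise, hence in particular at $\y$) together with the elementary fact that multiplication by a fixed real number preserves or reverses inequalities according to its sign. The role of $\dub$ and $k$ is merely to supply the integer weight $m_{\dub}(\y)$, and no structural property of $k$-boxes beyond $m_{\dub}(\y)\in\ZZ$ is needed. I would keep the write-up to these few lines, since the lemma is stated only to streamline later bookkeeping (replacing $\max$ of a weighted pair of values by a single case-selected value).
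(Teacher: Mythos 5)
Your argument is correct and is exactly the elementary case analysis the lemma calls for; the paper itself offers no proof, introducing the lemma explicitly as "summarizing a very basic mathematical fact," so your write-up simply makes the obvious reasoning explicit. Nothing further is needed.
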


\begin{definition}\label{prel:def:Lfunction}
Let $A,B \colon \II^n \to \II$ be two functions with $A\le B$ and define the function $\Lk{A,B}\colon\R_k(\II^n)\to\RR$ by
 \begin{equation*}
\Lk{A,B}(\dub) =\sum_{\substack{\y\in\II^n\\m_\dub(\y)>0}} m_\dub(\y)B(\y) + \sum_{\substack{\y\in\II^n\\ m_\dub(\y)<0}} m_\dub(\y)A(\y). 
\end{equation*}
\end{definition}

Because of Lemma~\ref{lem:claim-green} we have for all $\dub \in \R_k(\II^n)$
\begin{equation}\label{eq:LKonD}
\Lk{A,B}(\dub)=\sum_{\y\in\II^n} \max\{m_\dub(\y)A(\y), m_\dub(\y)B(\y)\}. 
\end{equation}
This is actually a finite sum  since $m_\dub(\y) \ne 0$ only for finitely many $\y \in \II^n$. Note that for a disjoint union of $k$-boxes $\dub$ we have $\Lk{A,A}(\dub)=V_{A,k}(\dub)$, and thus the function $A$ is $k$-increasing if and only if  $\Lk{A,A}(\dub) \ge 0$ for all $\dub \in \R_k(\II^n)$.

Furthermore, define the functions $\PMk^{(A,B)}\colon\II^n\to\RR$ and $\POk^{(A,B)}\colon\II^n\to\RR$ by
\begin{equation*}
\PMk^{(A,B)}(\x)=\inf_{\substack{\dub\in\R_k(\II^n)\\ m_\dub(\x)>0}}\frac{\Lk{A,B}(\dub)}{|m_\dub(\x)|} \qquad\text{and}\qquad \POk^{(A,B)}(\x)= \inf_{\substack{\dub\in \R_k(\II^n)\\ m_\dub(\x)<0}} \frac{\Lk{A,B}(\dub)}{|m_\dub(\x)|}.
\end{equation*}
Here we adopt the convention that the infimum of an empty set equals~$0$.
We also define the functions $\gamma_k^{(A,B)}\colon\II^n\to\RR$ and $\delta_k^{(A,B)}\colon\II^n\to\RR$ by
\begin{align*}
    \gamma_k^{(A,B)}(\x)&=\min\{\POk^{(A,B)}(\x),B(\x)-A(\x)\},\\[1ex]
    \delta_k^{(A,B)}(\x)&=\min\{\PMk^{(A,B)}(\x),B(\x)-A(\x)\}.
    \end{align*}
The intuition behind functions defined above is as follows.
The $k$-avoidance of sure loss  for a pair $(A,B)$ of semicopulas will be equivalent to the function $\Lk{A,B}$ being non-negative on any disjoint union of $k$-boxes, see Theorem~\ref{th:maintheorem}.
Now suppose that function $\Lk{A,B}$ is non-negative. The value $\gamma_k^{(A,B)}(\x)$ tells us at most how much we can increase the value of function $A$ at a single point $\x$ so that after the change the function $\Lk{A,B}$ remains non-negative, see Proposition~\ref{pr:step1A}.
Similarly, the value $\delta_k^{(A,B)}(\x)$ tells us at most how much we can decrease the value of function $B$ at point $\x$ so that after the change the function $\Lk{A,B}$ remains non-negative, see Proposition~\ref{pr:step1B}.

\begin{definition}
Let $\delta_1, \delta_2, \dots, \delta_n$ be subsets of $\II$ containing both $0$ and $1$, and put $\D = \prod_{i=1}^n \delta_i \subseteq \II^n$. 
If each set $\delta_i$ is countably infinite and dense in $\II$  then also $\D$ is countably infinite and dense in $\II^n$. We call such a $\D$ a \emph{dense countably infinite mesh} in $\II^n$. 
\end{definition}

Note that a dense countably infinite mesh $\D$ contains the points $\mathbf{0}$ and $\mathbf{1}$.  We define a version of the functions $\PMk^{(A,B)}$, $\POk^{(A,B)}$, $\gamma_k^{(A,B)}$, and $\delta_k^{(A,B)}$ for functions $A, B$ defined on a mesh $\D$ as follows.
Let $\R_k(\D)$ be the set of all finite disjoint unions of $k$-boxes with vertices in $\D$ and let $A,B \colon \D \to \II$ be functions with $A\le B$. Then the functions $\PMkD^{(A,B)}$, $\POkD^{(A,B)}$, $\gkD$ and $\dkD$ all map from $\D$ into $\RR$ and are defined by, respectively,  
\begin{gather*}
\PMkD^{(A,B)}(\dd) =\inf_{\substack{\dub\in\R_k(\D)\\ m_\dub(\dd)>0}}\frac{\Lk{A,B}(\dub)}{|m_\dub(\dd)|}\qquad \text{and} \qquad 
\POkD^{(A,B)}(\dd) = \inf_{\substack{\dub\in \R_k(\D)\\ m_\dub(\dd)<0}} \frac{\Lk{A,B}(\dub)}{|m_\dub(\dd)|},
\\[1ex]
    \gkD^{(A,B)}(\dd) =\min\{\POkD^{(A,B)}(\dd),B(\dd)-A(\dd)\},\\[1ex]
    \dkD^{(A,B)}(\dd) =\min\{\PMkD^{(A,B)}(\dd),B(\dd)-A(\dd)\}.
\end{gather*}
If the point  $\x$ from Lemma~\ref{le:KBoxesWithMRFromZ2} belongs to $\D$, then the disjoint union of $k$-boxes $\dub$ can be chosen from $\R_k(\D)$.
Furthermore, Lemma~\ref{lem:claim-green} is valid also if the functions $A, B$ are defined on $\D$ only and $\y\in\D$, in which case we have for all $\dub \in \R_k(\D)$
\begin{equation*} \Lk{A,B}(\dub)=\sum_{\dd\in\D} \max\{m_\dub(\dd)A(\dd), m_\dub(\dd)B(\dd)\}. 
\end{equation*}

\subsection{Bounds for $\POk^{(A,B)}$ and $\PMk^{(A,B)}$}\label{sse:prel:boundsPOPM} 

\begin{proposition}\label{pr:step2}
Let $\D$ be a dense countably infinite mesh in $\II^n$ and fix some integer $k\in[n]$. 
Let $A, B \colon \D \to \II$ be functions with $A \le B$ and $L_k^{(A,B)}(\dub)\ge 0$ for all $\dub\in \R_k(\D)$. 
Furthermore, assume that $A(\vv) = B(\vv)$ for all vertices $\vv$ of the unit cube $\II^n$. Then for each $\x \in \D$
\begin{equation*}
\POkD^{(A,B)}(\x) + \PMkD^{(A,B)}(\x) \ge B(\x) - A(\x).
\end{equation*} 
\end{proposition}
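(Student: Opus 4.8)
The plan is to fix $\x\in\D$ and produce, for arbitrary $\dub_1,\dub_2\in\R_k(\D)$ with $m_{\dub_1}(\x)<0$ and $m_{\dub_2}(\x)>0$, a lower bound of the form
\[
\frac{\Lk{A,B}(\dub_1)}{|m_{\dub_1}(\x)|}+\frac{\Lk{A,B}(\dub_2)}{|m_{\dub_2}(\x)|}\ \ge\ B(\x)-A(\x),
\]
and then take infima over $\dub_1$ and $\dub_2$ to conclude. If either infimum is over the empty set it equals $0$ by convention, but then the corresponding $P$-term is $0$ and the other term must be shown to be $\ge B(\x)-A(\x)$ on its own; I would treat this degenerate case separately and note that it actually cannot occur for $\x$ not a vertex by Lemma~\ref{le:KBoxesWithMRFromZ2}, while for $\x$ a vertex $B(\x)-A(\x)=0$ by hypothesis, so the inequality is trivial.

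**The main construction.**

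For the generic case, let $p=|m_{\dub_1}(\x)|$ and $q=|m_{\dub_2}(\x)|$. Form the scaled formal disjoint union $\dub = \bigl(\bigsqcup^{q}\dub_1\bigr)\sqcup\bigl(\bigsqcup^{p}\dub_2\bigr)\in\R_k(\D)$, so that $m_{\dub}(\x)=q\cdot m_{\dub_1}(\x)+p\cdot m_{\dub_2}(\x)=-pq+pq=0$ and $m_{\dub}(\y)=q\,m_{\dub_1}(\y)+p\,m_{\dub_2}(\y)$ for every $\y\in\D$. Since $\x$ is a vertex of $\dub$ with multiplicity zero, the key step is to split off from $\dub$ the contribution at $\x$: one compares $\Lk{A,B}(\dub)$, which by \eqref{eq:LKonD} (its mesh version) equals $\sum_{\y\in\D}\max\{m_{\dub}(\y)A(\y),m_{\dub}(\y)B(\y)\}$ with $q\Lk{A,B}(\dub_1)+p\Lk{A,B}(\dub_2)=\sum_{\y\in\D}\bigl(\max\{q m_{\dub_1}(\y)A(\y),q m_{\dub_1}(\y)B(\y)\}+\max\{p m_{\dub_2}(\y)A(\y),p m_{\dub_2}(\y)B(\y)\}\bigr)$. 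At every $\y\ne\x$ subadditivity of $\max$ gives that the $\dub$-term is $\le$ the sum of the $\dub_1$- and $\dub_2$-terms, hence
\[
q\Lk{A,B}(\dub_1)+p\Lk{A,B}(\dub_2)\ \ge\ \Lk{A,B}(\dub)\ +\ \Delta_{\x},
\]
where $\Delta_{\x}$ is the difference of the $\x$-contributions. Since $m_{\dub}(\x)=0$ the $\dub$-side contributes $0$ at $\x$, and the $\dub_1,\dub_2$-side contributes $\max\{q m_{\dub_1}(\x)A(\x),q m_{\dub_1}(\x)B(\x)\}+\max\{p m_{\dub_2}(\x)A(\x),p m_{\dub_2}(\x)B(\x)\}$; because $m_{\dub_1}(\x)<0$ and $m_{\dub_2}(\x)>0$, Lemma~\ref{lem:claim-green} identifies this as $-qp\,A(\x)+pq\,B(\x)=pq\bigl(B(\x)-A(\x)\bigr)$. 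Combined with the hypothesis $\Lk{A,B}(\dub)\ge 0$, we get $q\Lk{A,B}(\dub_1)+p\Lk{A,B}(\dub_2)\ge pq\bigl(B(\x)-A(\x)\bigr)$, and dividing by $pq$ yields the displayed two-term bound.

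**Finishing and the main obstacle.**

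Taking the infimum over all admissible $\dub_1$ gives $\POkD^{(A,B)}(\x)+\dfrac{\Lk{A,B}(\dub_2)}{q}\ge B(\x)-A(\x)$, and then the infimum over all admissible $\dub_2$ gives the claim. The step I expect to require the most care is the termwise comparison at points $\y\ne\x$: one must be sure that $\max\{(r+s)a,(r+s)b\}\le\max\{ra,rb\}+\max\{sa,sb\}$ for integers $r,s$ of possibly opposite sign and $a\le b$, applied with $r=q m_{\dub_1}(\y)$, $s=p m_{\dub_2}(\y)$ — this is where one genuinely uses $A\le B$, and it is really just the subadditivity of the convex function $t\mapsto\max\{ta,tb\}$, but it should be spelled out. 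A secondary point to handle cleanly is bookkeeping: $\Lk{A,B}$ as defined sums only over $\y$ with $m_\dub(\y)\ne 0$, so one should use the form \eqref{eq:LKonD} (valid on $\D$) throughout to make the termwise splitting legitimate, noting all sums involved are finite. The hypothesis $A(\vv)=B(\vv)$ at cube vertices is used only to dispose of the degenerate empty-infimum case and otherwise plays no role in the generic argument.
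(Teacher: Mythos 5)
Your proof is correct and follows essentially the same route as the paper: your union $\dub$ is exactly the paper's $\dub_3$, the split into the contribution at $\x$ versus at $\y\ne\x$ matches, and your termwise bound is the same subadditivity-of-$\max$ argument the paper uses. The only cosmetic difference is that the paper states the intermediate inequality as a three-term inequality (with the $L_k^{(A,B)}(\dub_3)$ term retained, then dropped using nonnegativity), while you fold the nonnegativity in directly.
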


\begin{proof}
If $\x$ is a vertex of the unit cube $\II^n$ then the claim holds, since the right-hand side of the inequality equals~$0$. 
So fix $\x \in \D$ which is not a vertex of the unit cube $\II^n$. By Lemma~\ref{le:KBoxesWithMRFromZ2} there exist some $\dub_1, \dub_2 \in \R_k(\D)$ with $m_{\dub_1}(\x) < 0$ and $m_{\dub_2}(\x) > 0$ which can be used to define a new disjoint union of $k$-boxes $\dub_3$ by
\begin{equation*} \dub_3 = \left(\bigsqcup_{t=1}^{|m_{\dub_2}(\x)|} \dub_1\right) \bigsqcup \left(\bigsqcup_{t=1}^{|m_{\dub_1}(\x)|} \dub_2\right),
\end{equation*}
i.e., $\dub_3$ consists of $|m_{\dub_2}(\x)|$ copies of $\dub_1$ and $|m_{\dub_1}(\x)|$ copies of $\dub_2$.
We want to show that 
\begin{equation} \label{eq:LkR3a}
\frac{L_k^{(A,B)}(\dub_1)}{|m_{\dub_1}(\x)|} + \frac{L_k^{(A,B)}(\dub_2)}{|m_{\dub_2}(\x)|}  \ge B(\x) - A(\x) + \frac{L_k^{(A,B)}(\dub_3)}{|m_{\dub_1}(\x)|\cdot|m_{\dub_2}(\x)|}.
\end{equation}
Inequality~\eqref{eq:LkR3a} is equivalent to 
\begin{equation}\label{eq:LkR3}
\begin{split} 
|m_{\dub_2}(\x)|\cdot L_k^{(A,B)}(\dub_1) &+ |m_{\dub_1}(\x)|\cdot L_k^{(A,B)}(\dub_2)  \\ 
&\ge |m_{\dub_1}(\x)|\cdot|m_{\dub_2}(\x)|\cdot (B(\x) - A(\x)) + L_k^{(A,B)}(\dub_3),
\end{split}
\end{equation}
which in turn can be rewritten into
\begin{align*}
|m_{\dub_2}(\x)| &\cdot\sum_{\dd\in\D} \max\left\{m_{\dub_1}(\dd)A(\dd), m_{\dub_1}(\dd)B(\dd)\right\} \\
 & \phantom{xxx} + |m_{\dub_1}(\x)|\cdot \sum_{\dd\in\D} \max\left\{m_{\dub_2}(\dd)A(\dd), m_{\dub_2}(\dd)B(\dd)\right\} \\
 &\ge |m_{\dub_1}(\x)|\cdot|m_{\dub_2}(\x)|\cdot (B(\x) - A(\x)) + \sum_{\dd\in\D} \max\left\{m_{\dub_3}(\dd)A(\dd), m_{\dub_3}(\dd)B(\dd)\right\}.
\end{align*}
We shall investigate the contribution of each $\dd\in\D$ to both sides of the inequality~\eqref{eq:LkR3} by distinguishing the two cases (1) $\dd=\x$, and (2) $\dd \ne \x$. Note that the term $|m_{\dub_1}(\x)|\cdot|m_{\dub_2}(\x)|\cdot (B(\x) - A(\x))$ contains the point $\x$, so it needs to be considered only in case (1).

\emph{Case} $1$: $\dd=\x$. Since $m_{\dub_1}(\x) < 0$ and $m_{\dub_2}(\x) > 0$, its contribution to the left-hand side of \eqref{eq:LkR3} is
\begin{equation*}
|m_{\dub_2}(\x)|\cdot m_{\dub_1}(\x)\cdot A(\x) + |m_{\dub_1}(\x)|\cdot m_{\dub_2}(\x)\cdot B(\x) 
= |m_{\dub_1}(\x)|\cdot|m_{\dub_2}(\x)|\cdot (B(\x) - A(\x)).
\end{equation*}
Since $m_{\dub_3}(\x) = |m_{\dub_2}(\x)|\cdot m_{\dub_1}(\x) + |m_{\dub_1}(\x)|\cdot m_{\dub_2}(\x) = 0$, its contribution to the right-hand side of \eqref{eq:LkR3} is also $|m_{\dub_1}(\x)|\cdot|m_{\dub_2}(\x)|\cdot (B(\x) - A(\x))$, and the inequality holds.

\emph{Case} $2$:  $\dd \ne \x$. Since $|m_{\dub_2}(\x)|\cdot m_{\dub_1}(\dd) + |m_{\dub_1}(\x)|\cdot m_{\dub_2}(\dd) = m_{\dub_3}(\dd)$,
for the contribution of $\dd$ to the left-hand side of \eqref{eq:LkR3} we obtain the following lower bound
\begin{align*} |m_{\dub_2}(\x)|&\cdot\max\left\{m_{\dub_1}(\dd) A(\dd), m_{\dub_1}(\dd)B(\dd)\right\} + |m_{\dub_1}(\x)| \cdot \max\left\{m_{\dub_2}(\dd) A(\dd), m_{\dub_2}(\dd)B(\dd)\right\} \\[1ex]
&\ge \max\left\{|m_{\dub_2}(\x)| \cdot m_{\dub_1}(\dd)\cdot A(\dd) + |m_{\dub_1}(\x)| \cdot m_{\dub_2}(\dd)\cdot A(\dd),\right. \\[1ex]
& \phantom{xxxxxx} \left.|m_{\dub_2}(\x)|\cdot m_{\dub_1}(\dd)\cdot B(\dd) + |m_{\dub_1}(\x)|\cdot m_{\dub_2}(\dd)\cdot B(\dd)\right\} \\[1ex]
&= \max\left\{m_{\dub_3}(\dd)A(\dd), m_{\dub_3}(\dd)B(\dd)\right\},
\end{align*}
equaling the contribution of $\dd$ to the right-hand side of \eqref{eq:LkR3}.
Thus, inequality~\eqref{eq:LkR3a} is verified.
The last term of inequality \eqref{eq:LkR3a} is non-negative by assumption, so it follows that 
\begin{equation*} 
\frac{L_k^{(A,B)}(\dub_1)}{|m_{\dub_1}(\x)|} + \frac{L_k^{(A,B)}(\dub_2)}{|m_{\dub_2}(\x)|}  \ge B(\x) - A(\x) .
\end{equation*}
In order to obtain the desired result it suffices to take the infimum over all 
$\dub_1\in \R_k(\D)$ with $m_{\dub_1}(\x) < 0$, on the one hand, and the infimum over all 
$\dub_2\in \R_k(\D)$ with $m_{\dub_2}(\x) > 0$, on the other hand.
\end{proof}

\begin{lemma}\label{lem:cont}
Let $A,B\colon\II^n\to\II$ be semicopulas with $A\le B$ and fix some integer $k\in[n]$. 
If $B$ is continuous then for all $\x\in\II^n$ 
\begin{equation*}
  \POk^{(A,B)}(\x)\le B(\x)-A(\x).  
\end{equation*}
If $A$ is continuous then  for all $\x\in\II^n$
\begin{equation*}
   \PMk^{(A,B)}(\x)\le B(\x)-A(\x). 
\end{equation*}
\end{lemma}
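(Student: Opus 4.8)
The goal is to produce, for a given $\x\in\II^n$ and a given $\varepsilon>0$, a disjoint union of $k$-boxes $\dub\in\R_k(\II^n)$ with $m_\dub(\x)<0$ (respectively $>0$) such that $\Lk{A,B}(\dub)/|m_\dub(\x)|$ is close to $B(\x)-A(\x)$. I would handle the two statements in parallel, since they are mirror images of each other: the first uses continuity of $B$ and $k$-boxes having $\x$ as a vertex with sign $-1$; the second uses continuity of $A$ and $\x$ as a vertex with sign $+1$.

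Let me focus on the first statement. If $\x$ is a vertex of $\II^n$, the right-hand side is $0$ and the left-hand side, being an infimum, is $\le 0$ by taking $\dub$ to be any suitable single $k$-box (or by the empty-infimum convention if no $k$-box with $m_\dub(\x)<0$ exists — but for a vertex one can still build one). So assume $\x$ is not a vertex. The natural candidate is a single $k$-box $R=\cint{\x^-,\x}$ sitting "just below" $\x$ in $k$ of the coordinates where $x_i>0$, with $\x$ as the vertex $\y$ of the box, so $\sign_R(\x)=1$; wait — I actually want $m_\dub(\x)<0$, so I should instead take $\x$ to be the vertex $\vv$ with $m=n$, i.e. the lower corner, giving $\sign_R(\x)=(-1)^k$. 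If $k$ is even this is the wrong sign, so the cleaner route is: pick a $k$-box $R$ for which $\x$ is a vertex with sign $-1$ (this requires $\x$ to differ from the opposite corner in an odd number of the "active" coordinates — always arrangeable by choosing which $k$ coordinates to perturb, as in Lemma~\ref{le:KBoxesWithMRFromZ2}, as long as $\x$ has at least one coordinate strictly between $0$ and $1$; if all nonzero coordinates of $\x$ equal $1$ one perturbs downward, if some are in $\ouint$ one can perturb up or down). Shrink this box so that all its vertices lie within distance $\eta$ of $\x$. Then $|m_\dub(\x)|=1$ and
\[
\Lk{A,B}(R)=\sum_{\vv\in\ver R}\max\{m_R(\vv)A(\vv),m_R(\vv)B(\vv)\}
= -A(\x) + B(\text{near-}\x\text{ vertices with }+) + \cdots,
\]
and the point is that all vertices other than $\x$ carry $B$ or $A$ values; by continuity of $B$ (and since $A\le B$, so $\max\{mA,mB\}\le mB$ when $m>0$ and $=mA$ when $m<0$) every term except $-A(\x)$ is within $O(\eta)$ of $\pm B(\x)$, and these nearly cancel in pairs except for the vertex diagonally opposite $\x$. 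A direct computation of the telescoping/alternating sum of $B$ over the vertices of a $k$-box whose side lengths tend to $0$ shows $\Lk{A,B}(R)\to B(\x)-A(\x)$ as $\eta\to 0$: indeed $\sum_{\vv\in\ver R, \vv\ne\x}m_R(\vv)B(\vv) \to B(\x)$ because it equals $V_{B,k}(R)-m_R(\x)B(\x)\to 0-(-B(\x))=B(\x)$ using that $V_{B,k}(R)\to 0$ by continuity, while $m_R(\x)A(\x)=-A(\x)$. Hence the infimum is $\le B(\x)-A(\x)$.

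The one subtlety I want to get right is the sign bookkeeping: I need a $k$-box with $m_R(\x)=-1$, not merely $\pm1$, and I must be able to arrange this while keeping all vertices arbitrarily near $\x$. Since I get to choose which $k$ of the coordinates are "active" and, for coordinates where $x_i\in\ouint$, whether to perturb up or down, I have enough freedom: flipping the perturbation direction in one active coordinate flips $\sign_R(\x)$. The only case where I cannot flip any direction is when every active coordinate has $x_i\in\{0,1\}$, but $\x$ not a vertex guarantees some coordinate lies in $\ouint$, and I simply insist that coordinate be among the $k$ active ones — then I can tune the sign. The semicopula hypothesis (uniform marginals, hence in particular $B$ agrees with $A$ at vertices of $\II^n$, both being determined there) is what makes the boundary behaviour harmless and is presumably why the statement is phrased for semicopulas rather than arbitrary standardized functions; I would remark that only continuity of the relevant bound and the vertex-agreement are actually used. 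The second statement is verbatim the same with the roles of $A$ and $B$ swapped and "$m_R(\x)<0$" replaced by "$m_R(\x)>0$" (equivalently, take $\x$ to be the upper corner $\y$ of the box), using continuity of $A$ so that $V_{A,k}(R)\to 0$ forces $\PMk^{(A,B)}(\x)\le B(\x)-A(\x)$.
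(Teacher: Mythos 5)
Your core argument has a genuine gap. You shrink a $k$-box $R$ to lie entirely within distance $\eta$ of $\x$, arrange $m_R(\x)=-1$, and compute
\[
\sum_{\vv\in\ver R,\, \vv\ne\x} m_R(\vv)B(\vv) \;=\; V_{B,k}(R)-m_R(\x)B(\x)\;\longrightarrow\; B(\x)
\]
by continuity of $B$. But $\Lk{A,B}(R)$ is not $-A(\x)+\sum_{\vv\ne\x} m_R(\vv)B(\vv)$: a vertex $\vv\ne\x$ with $m_R(\vv)=-1$ contributes $-A(\vv)$, not $-B(\vv)$. Since $A\le B$, replacing each such $-A(\vv)$ by $-B(\vv)$ only \emph{decreases} the sum, so what you actually get is $\Lk{A,B}(R)\ge B(\x)-A(\x)-O(\eta)$ — the wrong direction. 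Concretely, with $n=k=2$, $A=W$, $B=M$, $\x=(1/2,1/2)$ and $R=\cint{(1/2,\,1/2-\eta),\,(1/2+\eta,\,1/2)}$ one computes $\Lk{A,B}(R)=1-\eta$, not something near $B(\x)-A(\x)=1/2$. So the infimum you can extract from small boxes near $\x$ does not yield $\POk^{(A,B)}(\x)\le B(\x)-A(\x)$.

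The paper avoids this by not shrinking in all active coordinates: it uses the $k$-box $R_1=\cint{\x',\x''}$ with $\x'=(x_1,0,\dots,0,x_{k+1},\dots,x_n)$, $\x''=(x_1+\varepsilon,x_2,\dots,x_k,x_{k+1},\dots,x_n)$, so that $k-1$ of the active coordinates run all the way down to $0$. Then every vertex other than $\x$ and $\x''$ has a zero coordinate and contributes $0$ by groundedness, giving $\Lk{A,B}(R_1)=B(\x'')-A(\x)$ exactly; only one coordinate ($x_1\to x_1+\varepsilon$) is shrunk, and continuity of $B$ finishes it. Your proposal also glosses over the case where some $x_i=0$ (not just $\x$ a vertex of $\II^n$): there $B(\x)-A(\x)=0$ and one must show $\POk^{(A,B)}(\x)=0$, which is where the uniform-marginals property of semicopulas is genuinely used (to bound the $B$-values at vertices of a small box near the face $\{x_i=0\}$); your remark that the semicopula hypothesis only matters ``at vertices of $\II^n$'' misplaces where it is needed.
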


\begin{proof}
First assume that $\x = (x_1, \dots, x_n) \in\rint{0,1}^n$ and that $B$ is continuous. If $\x = \mathbf{1}$, then the set $\{\dub\in\R_k(\II^n) \mid m_\dub(\x)<0\}$ is empty, $\POk^{(A,B)}(\x) =0$ by definition, and the first inequality holds. If $\x \ne \mathbf{1}$, there exists $x_i < 1$. By permuting the coordinates, we may assume without loss of generality that $x_1 < 1$. Choose an $\varepsilon$ such that $0 < \varepsilon \le 1 - x_1$ and denote
$\x' = (x_1, 0, \dots, 0, x_{k+1}, \dots, x_n)$ and $\x'' = (x_1+\varepsilon, x_2, \dots, x_k, x_{k+1}, \dots, x_n)$.
Then $R_1 = \cint{\x', \x''}$ is a $k$-box with $m_{R'}(\x) = -1$ and 
\begin{equation*}
\POk^{(A,B)}(\x) \le \Lk{A,B}(R_1) = B(\x'') - A(\x),
\end{equation*}
since all other vertices of $R_1$ have at least one coordinate which equals~$0$. Sending $\varepsilon$ to 0 and using the continuity of $B$ gives the first inequality for the point $\x$.

Next assume that $\x\in\rint{0,1}^n$ and that $A$ is continuous. Choose an $\varepsilon$ such that $0 < \varepsilon \le x_1$ and put  
$\x' = (x_1 - \varepsilon, 0, \dots, 0, x_{k+1}, \dots, x_n)$  and $\x'' = (x_1-\varepsilon, x_2, \dots, x_k, x_{k+1}, \dots, x_n)$.
Then $R_2 = \cint{\x', \x}$ is a $k$-box with $m_{R_2}(\x) = 1$, $m_{R_2}(\x'') = -1$, and 
\begin{equation*}\PMk^{(A,B)}(\x) \le \Lk{A,B}(R_2) = B(\x) - A(\x'').
\end{equation*} 
Again, by sending $\varepsilon$ to 0 and using the continuity of $A$ we obtain the second inequality for the point $\x$.

Now assume that $\x\in\II^n\setminus\rint{0,1}^n$, so at least one coordinate $x_i$ of $\x$ equals~$0$, implying that $B(\x) - A(\x) =0$. We need to prove that $\POk^{(A,B)}(\x) = \PMk^{(A,B)}(\x) = 0$. We will do this without using any continuity of $A$ or $B$, so the same reasoning will work for both cases. Choose an $\varepsilon>0$. If the set $\{\dub\in\R_k(\II^n) \mid m_\dub(\x)<0\}$ is empty then $\POk^{(A,B)}(\x) =0$ by definition. If it is non-empty then there exists a $k$-box $R_3 = \cint{\x', \x''}$ with $m_{R_3}(\x) = -1$ such that 
 $\x' = (x'_1, \dots, x'_n)$, $\x'' = (x''_1, \dots, x''_n)$  and $x''_j - x'_j \le \varepsilon$  for all $j \in [n]$.
We have 
\begin{equation*}
   \POk^{(A,B)}(\x) \le \Lk{A,B}(R_3) = \sum_{\substack{\y\in\II^n\\m_{R_3}(\y)=1}} B(\y) - \sum_{\substack{\y\in\II^n\\ m_{R_3}(\y)=-1}} A(\y)
   \le \sum_{\substack{\y\in\II^n\\m_{R_3}(\y)=1}} B(\y) \le \sum_{\y\in \ver R_3} B(\y) .
\end{equation*}
Since $x_i=0$ we have $x''_i \le \varepsilon$, so $y_i \le \varepsilon$ for every $\y = (y_1, \dots, y_n) \in \ver R_3$. This means that $B(\y) \le B(1, \dots, 1, y_i, 1, \dots, 1) = y_i \le \varepsilon$  since $B$ is a semicopula. It follows that $\POk^{(A,B)}(\x) \le 2^k \varepsilon$, and sending $\varepsilon$ to $0$ gives $\POk^{(A,B)}(\x) =0$. The equality $\PMk^{(A,B)}(\x) = 0$ is shown similarly.
\end{proof}

\section{Main theorems}\label{se:maintheorem}

In this section we formulate our main results, the proofs of which will be given in Section~\ref{se:proofofmaintheorem}. These results give characterizations of pairs $(A,B)$ of standardized functions and semicopulas such that we have $k$-avoidance of sure loss (see Definition~\ref{def:avoidance}).
Our results extend the ALGEN method to the setting of standardized functions and semicopulas for any $k\in [n]$.
The acronym ALGEN stands for \underline{Al}gebraic Obstacles in the \underline{Ge}ometry of \underline{N}egative Volumes.
It is a method for constructing a copula lying between two given quasi-copulas~$A$ and $B$ with $A\leq B$, if it exists. For more details on the method see \cite[Appendix~A]{OmlSto22a}. In order to state our main results we first introduce the following notion.

\begin{condS} A function $A \colon \II^n \to \II$ satisfies Condition {\bf S} if there exists a countable set $S \subseteq \II$ such that for every $\uu \in \II^n$ and every $i \in [n]$ the set of discontinuities of the section $t \longmapsto A(u_1,\dots,u_{i-1},t,u_{i+1},\dots,u_n)$ is contained in $S$.
\end{condS}

Note that each section $t \longmapsto A(u_1,\dots,u_{i-1},t,u_{i+1},\dots,u_n)$ of a $1$-increasing function $A \colon \II^n \to \II$ has countably many discontinuities.
Condition~{\bf S} requires that for each section its set of discontinuities is
contained in a common countable set $S$.
Examples of functions that do respectively do not satisfy Condition~{\bf S} are depicted in Figure~\ref{fig:cond_S}. Here is our first main result.

\begin{figure}
    \centering
    \includegraphics[width=0.35\linewidth]{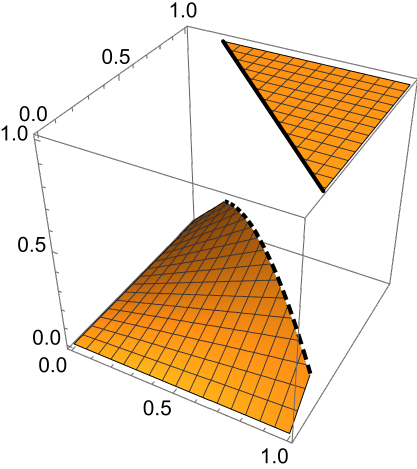} \qquad\includegraphics[width=0.35\linewidth]{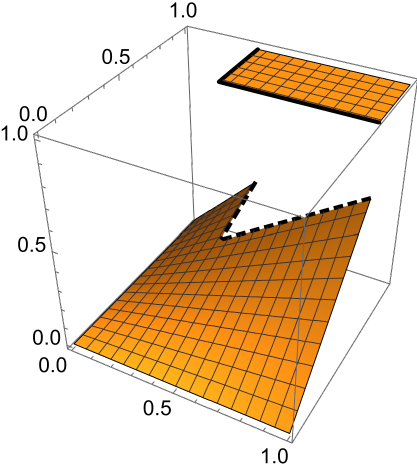}
    \caption{Graph of a function which does not satisfy Condition~{\bf S} (left) and a function which satisfies Condition~{\bf S} with $S=\{\frac 13,\frac 23\}$ (right).}
    \label{fig:cond_S}
\end{figure}

\begin{theorem}\label{th:maintheorem-2}
Let $A,B \colon \II^n \to \II$ be standardized functions with $A \leq B$ and fix some integer $k\in[n]$.
Suppose that at least one of the functions $A$ and $B$ satisfies Condition~{\bf S} for a set $S\subseteq\II$. Then the following statements are equivalent:
\begin{itemize}
    \item[\textup{(i)}] There exists a $k$-increasing $n$-variate function $C\colon\II^n\to\II$ such that $A\le C\le B$.
    \item[\textup{(ii)}] For all $\dub\in \R_k(\II^n): \quad  L_k^{(A,B)}(\dub)\ge 0.$
\end{itemize}
\end{theorem}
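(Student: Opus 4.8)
The plan is to prove the equivalence by establishing (i) $\Rightarrow$ (ii) directly and (ii) $\Rightarrow$ (i) through the three-step programme of Sections~\ref{se:CbyraisingA}--\ref{se:proofofmaintheorem}. The implication (i) $\Rightarrow$ (ii) is immediate: if $C$ is $k$-increasing with $A \le C \le B$, then for every $\dub \in \R_k(\II^n)$ and every $\y \in \II^n$ one has $\max\{m_\dub(\y)A(\y),\, m_\dub(\y)B(\y)\} \ge m_\dub(\y)C(\y)$ --- distinguish whether $m_\dub(\y)$ is positive, zero, or negative and use $A(\y) \le C(\y) \le B(\y)$ --- so by \eqref{eq:LKonD}, $\Lk{A,B}(\dub) \ge \sum_{\y} m_\dub(\y)C(\y) = V_{C,k}(\dub) \ge 0$.

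For (ii) $\Rightarrow$ (i) I would start from two observations. Since $A$ and $B$ are grounded and $A(\mathbf{1}) = B(\mathbf{1}) = 1$, they coincide at every vertex of the cube $\II^n$, so Proposition~\ref{pr:step2} will apply once we pass to a mesh. Also, I may assume that $B$ satisfies Condition~\textbf{S} for a countable set $S \subseteq \II$; the case in which $A$ satisfies Condition~\textbf{S} is handled symmetrically, interchanging the roles of $A$ and $B$ and performing the extension below from the lower side instead of the upper side. Fix a dense countably infinite mesh $\D = \prod_{i=1}^{n}\delta_i$ with $S \subseteq \delta_i$ for each $i$ (for instance $\delta_i = (\mathbb{Q}\cap\II)\cup S\cup\{0,1\}$). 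Since $\R_k(\D)\subseteq\R_k(\II^n)$, hypothesis~(ii) gives $\Lk{A,B}(\dub)\ge 0$ for all $\dub\in\R_k(\D)$.

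The heart of the proof consists of a mesh-level construction and an extension. For the construction (Sections~\ref{se:CbyraisingA}--\ref{se:CbyloweringB_short}): enumerate $\D$, and --- keeping $\Lk{A,B}(\dub)\ge 0$ on $\R_k(\D)$ as a running invariant, where $A$ and $B$ now denote the current, partially modified bounds --- at the $m$-th step first raise the value at $\dd_m$ of the current lower bound by $\gkD^{(A,B)}(\dd_m)$ and then lower the value at $\dd_m$ of the current upper bound by $\dkD^{(A,B)}(\dd_m)$. Proposition~\ref{pr:step2} guarantees that after these two moves the two bounds coincide at $\dd_m$ (and this value is never altered again), so their monotone pointwise limits coincide and define a function $C_\D\colon\D\to\II$ with $A\le C_\D\le B$ on $\D$; moreover, since $\Lk{A,B}(\dub)\ge 0$ persists along the iteration and, being a finite sum, converges to $V_{C_\D,k}(\dub)$, the function $C_\D$ is $k$-increasing on $\D$. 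For the extension (Section~\ref{se:extension}): for $\x\in\II^n$ put $\tilde C(\x)$ equal to the limit of $C_\D$ along appropriate sequences in $\D$ decreasing to $\x$, and check that $\tilde C$ is $k$-increasing by writing $V_{\tilde C,k}(R)$, for a $k$-box $R$, as a limit of values $V_{C_\D,k}(R_m)\ge 0$ over $k$-boxes $R_m$ with vertices in $\D$ that converge to $R$ from above. The bound $A\le\tilde C$ is then automatic, since $\tilde C(\x)$ is a limit of numbers $C_\D(\dd)\ge A(\dd)\ge A(\x)$ with $\dd\ge\x$ (using that $A$ is $1$-increasing); and $\tilde C\le B$ follows from Condition~\textbf{S}: whenever $x_i\notin\delta_i$ we have $x_i\notin S$, so the $i$-th section of $B$ is continuous at $x_i$ and one can take $\dd^{(m)}\downarrow\x$ inside $\D$ with $B(\dd^{(m)})\to B(\x)$, whence $\tilde C(\x)=\lim_m C_\D(\dd^{(m)})\le\lim_m B(\dd^{(m)})=B(\x)$. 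Taking $C=\tilde C$ completes the argument.

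I expect the extension step to be the main obstacle: $C_\D$ carries no Lipschitz or monotonicity property, so one must work to prove that the limits defining $\tilde C$ exist, do not depend on the chosen approximating sequences, and produce a genuinely $k$-increasing function on all of $\II^n$ --- all of this has to be extracted from the inequalities $\Lk{A,B}(\dub)\ge 0$, which bound the oscillation of $C_\D$ on $\D$. The mesh-level iteration is the secondary delicate point, but there the essential work is already done by Proposition~\ref{pr:step2}, which is exactly why it was isolated in advance.
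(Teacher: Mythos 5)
Your proof is correct, and in one important respect it takes a genuinely different and simpler route than the paper. The implication (i)~$\Rightarrow$~(ii) and the extension step (Section~\ref{se:extension}, Proposition~\ref{pr:step5:generalfunctions}) follow the paper's lines. The real divergence is in the mesh-level construction: the paper only raises $A$ towards $B$ (Proposition~\ref{pr:step3A}), leaving $B$ fixed, and then must invest the whole of Section~\ref{sse:step4A} (Lemma~\ref{le:step4A:existenceRi}, Propositions~\ref{pr:step4A:upperboundsLkRhats} and~\ref{pr:step4A}) in a delicate combinatorial argument to prove that the resulting limit is $k$-increasing. You instead interleave raising $A$ with lowering $B$ at the same point and observe --- correctly --- that Proposition~\ref{pr:step2} forces the two bounds to collide at each visited point: after raising, $\gkD^{(A',B)}(\x)=0$ gives either $B(\x)=A'(\x)$ or $\POkD^{(A',B)}(\x)=0$, and in the latter case Proposition~\ref{pr:step2} yields $\PMkD^{(A',B)}(\x)\ge B(\x)-A'(\x)$, so $\dkD^{(A',B)}(\x)=B(\x)-A'(\x)$ and the subsequent lowering makes $B'(\x)=A'(\x)$. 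Since later steps never touch $\x$ again, the increasing sequence $A^{(m)}$ and the decreasing sequence $B^{(m)}$ have the same limit $C_{\D}$, and for any $\dub\in\R_k(\D)$ the finitely many involved vertices eventually satisfy $A^{(m)}=B^{(m)}=C_{\D}$, so $\Lk{A^{(m)},B^{(m)}}(\dub)\ge0$ stabilises at $V_{C_\D,k}(\dub)\ge0$. This buys you a complete bypass of the hardest part of the paper's proof, at no cost. One small inaccuracy in your closing remark: $C_{\D}$ is automatically $1$-increasing on $\D$ (grounded $+$ $k$-increasing $\Rightarrow$ $1$-increasing, as the paper notes at the start of Proposition~\ref{pr:step5:generalfunctions}), so the monotonicity you worry about in the extension step is in fact present; the genuine subtlety there is not existence of limits but the sandwiching construction of a $k$-box $\widehat{R}\subseteq\D$ approximating a given $k$-box $R\subseteq\II^n$, which the paper carries out explicitly and your sketch only gestures at.
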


Note that whenever $\D\subseteq\II^n$ is a countably infinite mesh then the assertion \textup{(ii)} of Theorem~\ref{th:maintheorem-2} implies that 
$\LkAB(\dub)\ge 0$  for all $\dub\in\R_k(\D)$.
In the framework of semicopulas, Theorem~\ref{th:maintheorem-2} can be strengthened by omitting Condition~\textbf{S}.

\begin{theorem}\label{th:maintheorem}
Consider two $n$-variate semicopulas $A,B\colon \II^n\to\II$ with $A\le B$ and fix some integer $k\in [n]$. 
Then the following statements are equivalent:
\begin{itemize}
    \item[\textup{(i)}] There exists a $k$-increasing $n$-variate semicopula $C\colon\II^n\to\II$ such that $A\le C\le B$.
    \item[\textup{(ii)}] For all $\dub\in \R_k(\II^n): \quad L_k^{(A,B)}(\dub)\ge 0.$
\end{itemize}
\end{theorem}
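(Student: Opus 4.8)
First consider the easy direction (i) $\Rightarrow$ (ii). If $C\colon\II^n\to\II$ is a $k$-increasing semicopula with $A\le C\le B$, then fixing any $\dub\in\R_k(\II^n)$ and comparing \eqref{eq:LKonD} term by term gives $\max\{m_\dub(\y)A(\y),m_\dub(\y)B(\y)\}\ge m_\dub(\y)C(\y)$ for every $\y$: by Lemma~\ref{lem:claim-green} the maximum equals $m_\dub(\y)B(\y)$ when $m_\dub(\y)>0$ (and $C\le B$), equals $m_\dub(\y)A(\y)$ when $m_\dub(\y)<0$ (and $C\ge A$, so multiplication by the negative $m_\dub(\y)$ reverses the inequality), and is $0$ when $m_\dub(\y)=0$. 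Summing over $\y$ yields $\LkAB(\dub)\ge V_{C,k}(\dub)\ge 0$. No semicopula-specific input enters here.

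For (ii) $\Rightarrow$ (i) the plan is to run the same machinery that proves Theorem~\ref{th:maintheorem-2} and to check that its single use of Condition~\textbf{S} can be removed when $A$ and $B$ are semicopulas. First I would fix a dense countably infinite mesh $\D$ in $\II^n$; by the remark following Theorem~\ref{th:maintheorem-2}, hypothesis (ii) gives $\LkAB(\dub)\ge 0$ for all $\dub\in\R_k(\D)$. Because $A$ and $B$ are semicopulas they coincide at every vertex of $\II^n$ (both vanish at vertices having a zero coordinate, and $A(\mathbf{1})=1=B(\mathbf{1})$ by the uniform marginals), so Proposition~\ref{pr:step2} applies on $\D$ and yields $\POkD^{(A,B)}+\PMkD^{(A,B)}\ge B-A$ there. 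With this in hand, the construction of Section~\ref{se:CbyraisingA} (raising $A$ by $\gkD^{(A,B)}$) or, dually, that of Section~\ref{se:CbyloweringB_short} (lowering $B$) produces a $k$-increasing function $C$ on $\D$ with $A\le C\le B$ on $\D$. It then remains to extend $C$ from $\D$ to a $k$-increasing function on the whole cube by the procedure of Section~\ref{se:extension}, and to assemble everything as in Section~\ref{se:proofofmaintheorem}.

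The crux is exactly the extension step, where for general standardized bounds Theorem~\ref{th:maintheorem-2} needs Condition~\textbf{S} to keep the extended function between $A$ and $B$ at points of $\II^n\setminus\D$. For semicopulas I would replace this by their intrinsic control near the coordinate hyperplanes: from $1$-increasingness and uniform marginals one has $A(\x),B(\x)\le\min_i x_i$, so both functions are squeezed to $0$ as any coordinate tends to $0$; the boundary case of Lemma~\ref{lem:cont} (which is established there without any continuity hypothesis, purely from the semicopula axioms) together with Proposition~\ref{pr:step2} should then give enough control on $\POk^{(A,B)}$ and $\PMk^{(A,B)}$ for the mesh-to-cube extension of $C$ to remain within $[A,B]$ everywhere on $\II^n$. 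I expect this verification to be the main obstacle.

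Finally, once a $k$-increasing $C$ with $A\le C\le B$ has been produced on $\II^n$, it is automatically a semicopula, so nothing more is needed: it is grounded since $A$ and $B$ vanish on every coordinate hyperplane; it has uniform marginals since $t=A(1,\dots,t,\dots,1)\le C(1,\dots,t,\dots,1)\le B(1,\dots,t,\dots,1)=t$; and it is $1$-increasing because any grounded $k$-increasing function is increasing in each variable. For the last point, to compare the value of $C$ at the point with $i$-th coordinate $s$ with that at the point with $i$-th coordinate $t$ (the other coordinates fixed at $x_j$, $s\le t$), apply $k$-increasingness to the $k$-box that moves coordinate $i$ over $[s,t]$ and $k-1$ further coordinates $j$, chosen with $x_j>0$, over $[0,x_j]$: groundedness annihilates every vertex having a zero coordinate, and the remaining two terms collapse $V_{C,k}\ge 0$ into $C(\dots,t,\dots)-C(\dots,s,\dots)\ge 0$; if fewer than $k-1$ coordinates $j\neq i$ with $x_j>0$ exist, the points in question already have a zero coordinate and $C$ vanishes there.
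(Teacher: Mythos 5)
Your direction \textup{(i)} $\Rightarrow$ \textup{(ii)} is correct and coincides with the paper's argument, and the overall plan for \textup{(ii)} $\Rightarrow$ \textup{(i)} (raise $A$ on a dense mesh via Sections~\ref{se:CbyraisingA}, then extend to $\II^n$) is also the right skeleton. The last paragraph, showing that a $k$-increasing $C$ squeezed between two semicopulas is automatically a semicopula, is a correct and useful sanity check (the reduction of a $(k-1)$-box with a positive degenerate coordinate to a $k$-box, using groundedness to kill the vertices on the hyperplane, is exactly the standard argument).

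However, the step you yourself single out as the crux --- extending $C$ from $\D$ to $\II^n$ while staying in $[A,B]$ without Condition~\textbf{S} --- is where the proposal breaks down, and the idea you sketch does not close the gap. Semicopulas need not be continuous (e.g.\ $A(x,y)=\min(x,y)$ if $\max(x,y)=1$ and $A(x,y)=0$ otherwise is a discontinuous semicopula), so ``intrinsic control near the coordinate hyperplanes'' plus the boundary case of Lemma~\ref{lem:cont} gives you nothing at interior points $\x\in\II^n\setminus\D$: if $A$ jumps at such an $\x$, $\sup\{A(\dd)\mid\dd\in\cint{\mathbf0,\x}\cap\D\}$ can be strictly less than $A(\x)$ and the lower bound $A(\x)\le\widehat C(\x)$ fails by the same route as for general standardized functions. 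The actual key in the paper (Proposition~\ref{pr:step5:semicopulas}) is a regularity property of $C$, not of $A$ or $B$: a grounded, $k$-increasing function with $k\ge2$ that is squeezed between two semicopulas has uniform marginals and hence is $2$-increasing (iterate your own $(k{-}1)$-box reduction) and therefore $1$-Lipschitz on $\D$. Being $1$-Lipschitz on a dense set, $C$ has a unique continuous extension $C'$, which coincides simultaneously with $\widehat C$ (sup from below) and with $\widetilde C$ (inf from above) of Proposition~\ref{pr:step5:generalfunctions}. Since the inequality $\widehat C\le B$ holds without Condition~\textbf{S} and the inequality $A\le\widetilde C$ holds without Condition~\textbf{S}, and $\widehat C=\widetilde C=C'$, you get $A\le C'\le B$ for free. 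This Lipschitz trick is the missing idea. Finally, you also overlook that this argument requires $k\ge2$; for $k=1$ the statement is trivial (take $C=A$, which is itself a semicopula and $1$-increasing), and the paper treats it separately.
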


\section{Construction of $C$ from below and discussion of its properties}\label{se:CbyraisingA}

\subsection{Constructing $C$ from below}\label{sse:Cfrombelow}

We will construct the function $C$ first on a dense countably infinite mesh $\D$. This will be done by raising the values of the function $A$ point by point. The first proposition describes how this is done at a single point.

\begin{proposition} \label{pr:step1A}
Let $\D$ be a dense countably infinite mesh in $\II^n$ and fix some integer $k\in[n]$.
Let $A, B \colon \D \to \II$ be functions with $A \le B$  and $L_k^{(A,B)}(\dub)\ge 0$ for all $\dub\in \R_k(\D)$.  Fix a point $\x \in \D$ and define the function $A' \colon \D \to \II$ by
\begin{equation*}
A'(\uu) =  \begin{cases}
            A(\uu) & \text{if }   \uu \ne \x, \\
            A(\x) + \gkD^{(A,B)}(\x)& \text{if }   \uu = \x.
            \end{cases}
\end{equation*}
Then it follows that $A\le A' \le B$, that the pair $(A', B)$ satisfies the condition $L_k^{(A',B)}(\dub)\ge 0$ for all $\dub\in \R_k(\D)$, and that $\gkD^{(A',B)}(\x) = 0.$
\end{proposition}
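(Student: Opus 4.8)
The plan is to verify the three assertions in turn; the only real content is a book-keeping identity describing how $\Lk{A,B}(\dub)$ changes when the value of $A$ is raised at the single point $\x$. First I would record that $\gkD^{(A,B)}(\x)\ge 0$: indeed $B(\x)-A(\x)\ge 0$ since $A\le B$, and each quotient $\Lk{A,B}(\dub)/|m_\dub(\x)|$ occurring in the infimum defining $\POkD^{(A,B)}(\x)$ is $\ge 0$ by hypothesis (with the convention that the infimum of the empty set is $0$). Hence $A\le A'$; and $A'\le B$ holds trivially away from $\x$, while at $\x$ it follows from $A'(\x)=A(\x)+\gkD^{(A,B)}(\x)\le A(\x)+(B(\x)-A(\x))=B(\x)$.

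The crucial observation is that $A$ and $A'$ agree except at $\x$, and $A(\x)$ enters $\Lk{A,B}(\dub)$ only through the summand $m_\dub(\x)A(\x)$, which is present precisely when $m_\dub(\x)<0$. Therefore, for every $\dub\in\R_k(\D)$,
\[
\Lk{A',B}(\dub)=\begin{cases}\Lk{A,B}(\dub)&\text{if }m_\dub(\x)\ge 0,\\[1ex]\Lk{A,B}(\dub)-|m_\dub(\x)|\,\gkD^{(A,B)}(\x)&\text{if }m_\dub(\x)<0.\end{cases}
\]
In the first case $\Lk{A',B}(\dub)=\Lk{A,B}(\dub)\ge 0$; in the second, dividing by $|m_\dub(\x)|$ and invoking $\gkD^{(A,B)}(\x)\le\POkD^{(A,B)}(\x)\le\Lk{A,B}(\dub)/|m_\dub(\x)|$ shows $\Lk{A',B}(\dub)\ge 0$ as well. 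This establishes that the pair $(A',B)$ satisfies $\Lk{A',B}(\dub)\ge 0$ for all $\dub\in\R_k(\D)$.

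For $\gkD^{(A',B)}(\x)=0$ I would feed the displayed identity back into the definitions. Taking the infimum over all $\dub\in\R_k(\D)$ with $m_\dub(\x)<0$ yields $\POkD^{(A',B)}(\x)=\POkD^{(A,B)}(\x)-\gkD^{(A,B)}(\x)$, an equality that remains valid when this index set is empty, since then both sides vanish and $\gkD^{(A,B)}(\x)=0$. Combining this with $B(\x)-A'(\x)=(B(\x)-A(\x))-\gkD^{(A,B)}(\x)$ and using $\min\{a-c,b-c\}=\min\{a,b\}-c$, we obtain $\gkD^{(A',B)}(\x)=\gkD^{(A,B)}(\x)-\gkD^{(A,B)}(\x)=0$.

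I do not expect a genuine obstacle here: the proposition is essentially a matter of unwinding definitions. The two points that require attention are the sign book-keeping in the case $m_\dub(\x)<0$ — so that raising $A$ at $\x$ lowers $\Lk{\cdot,B}(\dub)$ by exactly $|m_\dub(\x)|\,\gkD^{(A,B)}(\x)$ — and the empty-index-set convention for the infima, which has to be carried consistently through the identity for $\POkD^{(A',B)}(\x)$.
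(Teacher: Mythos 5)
Your proposal is correct and follows essentially the same route as the paper's proof: it records the same change-of-$A$ identity for $\Lk{A',B}(\dub)$ (the paper's equation~\eqref{eq:LkA'}), invokes $\gkD^{(A,B)}(\x)\le\POkD^{(A,B)}(\x)\le\Lk{A,B}(\dub)/|m_\dub(\x)|$ in the same way to get non-negativity when $m_\dub(\x)<0$, and concludes $\gkD^{(A',B)}(\x)=0$ via the same constant-shift-through-the-min argument. The only difference is cosmetic: you explicitly flag the empty-index-set case when pulling the constant $\gkD^{(A,B)}(\x)$ out of the infimum defining $\POkD^{(A',B)}(\x)$, whereas the paper leaves that implicit.
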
 

\begin{proof}
If $\uu \ne \x$ then $A'(\uu) = A(\uu) \le B(\uu)$ and, by definition of $\gkD^{(A,B)}$,
\begin{equation*}
A'(\x) = A(\x) + \gkD^{(A,B)}(\x) \le A(\x) + B(\x) - A(\x) = B(\x).
\end{equation*}
If $\dub \in \R_k(\D)$ with $m_\dub(\x) \ge 0$ then $L_k^{(A',B)}(\dub) = L_k^{(A,B)}(\dub) \ge 0$. If $\dub \in \R_k(\D)$ with $m_\dub(\x) < 0$ then 
\begin{equation} \label{eq:LkA'}
L_k^{(A',B)}(\dub) = L_k^{(A,B)}(\dub) - m_\dub(\x)A(\x) + m_\dub(\x)A'(\x) = L_k^{(A,B)}(\dub) + m_\dub(\x)\gkD^{(A,B)}(\x),
\end{equation}
implying 
\begin{equation*} \gkD^{(A,B)}(\x) \le \POkD^{(A,B)}(\x) \le \frac{L_k^{(A,B)}(\dub)}{|m_\dub(\x)|} = - \frac{L_k^{(A,B)}(\dub)}{m_\dub(\x)} .
\end{equation*}
Since $m_\dub(\x) < 0$ it follows that $m_\dub(\x)\gkD^{(A,B)}(\x) \ge - L_k^{(A,B)}(\dub)$, and thus again $L_k^{(A',B)}(\dub) \ge 0$.
To verify $\gkD^{(A',B)}(\x) = 0$ we compute
\begin{equation*} \POkD^{(A',B)}(\x) = \inf_{\substack{\dub\in\R_k(\D)\\ m_\dub(\x)<0}}\frac{\Lk{A',B}(\dub)}{|m_\dub(\x)|} = \inf_{\substack{\dub\in\R_k(\D)\\ m_\dub(\x)<0}} \frac{L_k^{(A,B)}(\dub)}{|m_\dub(\x)|} - \gkD^{(A,B)}(\x)= \POkD^{(A,B)}(\x) - \gkD^{(A,B)}(\x)\end{equation*}
by \eqref{eq:LkA'}. Note that we may use~\eqref{eq:LkA'} since the infimum is taken over disjoint unions of $k$-boxes $\dub$ with $m_\dub(\x) < 0$ only. Finally,
\begin{align*}
\gkD^{(A',B)}(\x) &= \min\{\POkD^{(A',B)}(\x), B(\x) - A'(\x)\} \\[1ex]
&= \min\{\POkD^{(A,B)}(\x) - \gkD^{(A,B)}(\x), B(\x) - A(\x) - \gkD^{(A,B)}(\x)\} \\[1ex]
&= \gkD^{(A,B)}(\x) - \gkD^{(A,B)}(\x) = 0. 
\tag*{\qedhere}
\end{align*}
\end{proof}

In the following proposition we construct $C$ as a pointwise limit of an increasing sequence of functions~$A^{(i)}$ obtained from $A$.

\begin{proposition}\label{pr:step3A}
Let $\D$ be a dense countably infinite mesh in $\II^n$ and fix some integer $k\in[n]$.
Let $A, B \colon \D \to \II$ be functions with $A \le B$ and  $L_k^{(A,B)}(\dub)\ge 0$ for all $\dub\in \R_k(\D)$. Then there exists a function $C\colon\D\to\II$ such that
\begin{itemize}
	\item[\textup{(i)}] $A \le C \le B$ on $\D$,
	\item[\textup{(ii)}] $\gkD^{(C,B)}(\dd) = 0$ for all $\dd \in \D$,
	\item[\textup{(iii)}] $\LkCB(\dub) \ge 0$ for all $\dub\in\R_k(\D)$.
\end{itemize}
\end{proposition}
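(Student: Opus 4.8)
The plan is to build $C$ by iterating Proposition~\ref{pr:step1A} over an enumeration of the (countable) mesh $\D$, taking a pointwise limit, and then checking that the three desired properties survive the limit.

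\textbf{Construction.} Since $\D$ is a dense countably infinite mesh, fix an enumeration $\D = \{\dd_1, \dd_2, \dd_3, \dots\}$. Set $A^{(0)} = A$. Given $A^{(i)}\colon\D\to\II$ with $A \le A^{(i)} \le B$ and $\LkAB[A^{(i)},B](\dub) \ge 0$ for all $\dub\in\R_k(\D)$, apply Proposition~\ref{pr:step1A} with the point $\x = \dd_{i+1}$ to obtain $A^{(i+1)}$; thus $A^{(i+1)}$ agrees with $A^{(i)}$ except possibly at $\dd_{i+1}$, where its value is raised to $A^{(i)}(\dd_{i+1}) + \gkD[A^{(i)},B](\dd_{i+1})$. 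Proposition~\ref{pr:step1A} guarantees $A^{(i)} \le A^{(i+1)} \le B$, that the pair $(A^{(i+1)}, B)$ still satisfies $\LkAB[A^{(i+1)},B](\dub)\ge 0$ for all $\dub$, and that $\gkD[A^{(i+1)},B](\dd_{i+1}) = 0$. For each $\dd\in\D$ the sequence $(A^{(i)}(\dd))_{i\ge 0}$ is increasing and bounded above by $B(\dd)\le 1$, so we may define $C(\dd) = \lim_{i\to\infty} A^{(i)}(\dd) = \sup_i A^{(i)}(\dd)$.

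\textbf{Verifying the properties.} Property (i) is immediate: $A = A^{(0)} \le C$, and $C \le B$ since each $A^{(i)} \le B$. For (iii), fix $\dub\in\R_k(\D)$; only finitely many points $\dd\in\D$ have $m_\dub(\dd)\ne 0$, so by Lemma~\ref{lem:claim-green} (in its mesh version, via equation~\eqref{eq:LKonD} adapted to $\D$) the quantity $\LkAB[A^{(i)},B](\dub) = \sum_{\dd\in\D}\max\{m_\dub(\dd)A^{(i)}(\dd), m_\dub(\dd)B(\dd)\}$ is a finite sum, each summand of which converges as $i\to\infty$ to $\max\{m_\dub(\dd)C(\dd), m_\dub(\dd)B(\dd)\}$; hence $\LkCB(\dub) = \lim_i \LkAB[A^{(i)},B](\dub) \ge 0$. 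The subtle point is (ii): we must show $\gkD[C,B](\dd) = 0$ for \emph{every} $\dd\in\D$, i.e. $\POkD[C,B](\dd) = 0$ or $B(\dd) = C(\dd)$. Fix $\dd = \dd_j$. Along the construction we arranged $\gkD[A^{(j)},B](\dd_j) = 0$, but later steps raise the values of $A^{(i)}$ at other points, which can only \emph{decrease} $\LkAB[A^{(i)},B](\dub)$ for disjoint unions $\dub$ with $m_\dub(\dd_j) < 0$ — so one must argue that $\POkD$ at $\dd_j$ does not increase back to a positive value. The right way is to compare $C$ with $A^{(i)}$ directly: for $\dub$ with $m_\dub(\dd_j) < 0$, using (iii) for $C$ and the fact $C \ge A^{(i)}$ one shows $\LkAB[C,B](\dub)/|m_\dub(\dd_j)| \ge$ something controlled, and combined with monotone convergence $B(\dd_j) - C(\dd_j) = \lim_i (B(\dd_j) - A^{(i)}(\dd_j))$ and $\gkD[A^{(i)},B](\dd_j)\to 0$, one concludes $\gkD[C,B](\dd_j) = 0$.

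\textbf{Anticipated main obstacle.} The delicate step is precisely property (ii): establishing that a single pass through the enumeration, combined with the limit, forces $\gkD[C,B]$ to vanish everywhere, despite the fact that each raising step at $\dd_{i+1}$ can in principle spoil the equality $\gkD = 0$ previously achieved at earlier points $\dd_j$. The resolution should hinge on the monotonicity $\gkD[A^{(i)},B](\dd_j)$ is non-increasing in $i$ (or at least that its $\liminf$ is $0$): once $\gkD[A^{(j)},B](\dd_j) = 0$ is achieved, later raising of values at other points only lowers the relevant $\LkAB$-ratios and lowers $B - A^{(i)}$ at $\dd_j$, so $\gkD$ stays pinned at $0$; then the continuity argument above transfers this to $C$. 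Writing this monotonicity cleanly — perhaps by noting that $\POkD^{(A^{(i+1)},B)}(\dd_j)\le\POkD^{(A^{(i)},B)}(\dd_j)$ whenever values are raised — is the crux, and I expect it to require the same ``contribution of each $\dd$'' bookkeeping used in the proof of Proposition~\ref{pr:step2}.
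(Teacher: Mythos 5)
Your construction and overall plan coincide with the paper's proof: enumerate $\D$, apply Proposition~\ref{pr:step1A} at each point in turn, pass to the pointwise limit, and check (i)--(iii). You also correctly isolate property (ii) as the only nontrivial point and correctly anticipate that the resolution is a monotonicity statement. However, the way you propose to close it is more complicated than needed, and one of your intermediate claims is slightly off.

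You say raising $A^{(i)}$ at a later point ``can only decrease $L_k^{(A^{(i)},B)}(\dub)$ for disjoint unions $\dub$ with $m_\dub(\dd_j)<0$.'' The decrease actually has nothing to do with $\dd_j$: increasing the first argument $A$ at \emph{any} point lowers $L_k^{(A,B)}(\dub)$ (or leaves it unchanged) for \emph{every} $\dub$, because only the terms with $m_\dub(\y)<0$ are affected, and there $m_\dub(\y)A(\y)$ decreases. This immediately gives $P_{\mathrm{O},k,\D}^{(A^{(i+1)},B)} \le P_{\mathrm{O},k,\D}^{(A^{(i)},B)}$ pointwise, and since $B - A^{(i+1)} \le B - A^{(i)}$ as well, $\gamma_{k,\D}^{(A^{(i)},B)}(\dd_j)$ is nonincreasing in $i$; being nonnegative and hitting $0$ at $i=j$, it is $0$ for all $i\ge j$. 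No Proposition~\ref{pr:step2}-style contribution bookkeeping is needed. Finally, the transfer to $C$ is by the same observation applied once more: $C \ge A^{(i)}$ forces $L_k^{(C,B)}(\dub)\le L_k^{(A^{(i)},B)}(\dub)$ and $B-C \le B-A^{(i)}$, so $\gamma_{k,\D}^{(C,B)}(\dd_j) \le \gamma_{k,\D}^{(A^{(j)},B)}(\dd_j) = 0$, while $\gamma_{k,\D}^{(C,B)}(\dd_j)\ge 0$ because (iii) and $C\le B$ hold. So your outline is essentially the paper's proof; you just need to replace the anticipated heavy machinery at the crux with this one-line monotonicity and correct the quantifier in the decrease claim.
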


\begin{proof}
Since $\D$ is countable we can arrange the elements of $\D$ into a sequence $(\dd_i)_{i\in \NN}$. We recursively define a sequence of functions $A^{(i)} \colon \D \to \II$ putting $A^{(0)}= A$ and, for $i\ge 1$,
\begin{equation}\label{eq:step3A:DefAis}
A^{(i)}(\dd) =  \begin{cases}
            A^{(i-1)}(\dd) & \text{if }  \dd \ne \dd_i, \\
            A^{(i-1)}(\dd_i) + \gkD^{(A^{(i-1)},B)}(\dd_i) & \text{if }   \dd = \dd_i.
            \end{cases}
\end{equation}
The definition of $A^{(i)}$ and Proposition~\ref{pr:step1A}, imply $A \le A^{(1)} \le A^{(2)} \le \cdots \le A^{(i-1)} \le A^{(i)} \le~\cdots$ and $A^{(i)} \le B$. Using Proposition~\ref{pr:step1A}, we also have $\gkD^{(A^{(i)},B)}(\dd_i) = 0$ for all $i \in \NN$ and $\Lk{A^{(i)},B}(\dub) \ge 0$ for all $\dub\in\R_k(\D)$. It follows that $\gkD^{(A,B)}(\dd) \ge \gkD^{(A^{(1)},B)}(\dd) \ge \dots \ge \gkD^{(A^{(i)},B)}(\dd) \ge \cdots$ for all $\dd \in \D$. Hence, $\gkD^{(A^{(i)},B)}(\dd_j)=0$ for all $i \ge j$.

Now, let $C$ be the pointwise limit of the sequence $(A^{(i)})_{i\in\NN}$. The limit exists since at each $\dd \in \D$ the sequence of numbers $(A^{(i)}(\dd))_{i\in\NN}$ is increasing and bounded. It immediately follows that.
\begin{itemize}
	\item[\textup{(i)}] $A \le C \le B$ on $\D$;
	\item[\textup{(ii)}] $\gkD^{(C,B)}(\dd_j) = 0$ for all $j\in \NN$;
	\item[\textup{(iii)}] $\LkCB(\dub)= \lim_{i \to \infty} \Lk{A^{(i)},B}(\dub)\ge 0$ for all $\dub\in\R_k(\D)$; 
\end{itemize}
where~\textup{(iii)} holds because there are only finitely many points $\dd \in \D$ with $m_\dub(\dd) \ne 0$, thus completing the proof.
\end{proof}

\subsection{On the $k$-increasingess of $C$}\label{sse:step4A}

We have so far shown that for two $n$-variate functions $A,B\colon\D\to\II$ on a dense countably infinite mesh $\D\subseteq\II^n$ with $\LkAB(\dub)\ge 0$ for all $\dub\in\R_k(\D)$ we may obtain another function $C\colon\D\to\II^n$ as the pointwise limit of a sequence of functions $(A^{(i)})_{i\in \NN}$ as given by \eqref{eq:step3A:DefAis}. 

We shall show that the  function $C$ obtained in this way is $k$-increasing, i.e., fulfills $\Lk{C,C}(\dub)\ge 0$ for all $\dub\in\R_k(\D).$
Before doing so, let us first look at the consequences a violation of the $k$-increasingness for $C$ would have. 

Without loss of generality we may assume that the $k$-increasingness is violated for a $k$-box $\skb\in\R_k(\D)$, i.e., 
$\Lk{C,C}(\skb)<0$.
Note that the $k$-box $\skb$ has exactly $2^k$ vertices, half of them with positive multiplicities. We shall denote these vertices by $\x_i$, i.e., for each $i\in[2^{k-1}]$
we have
\begin{equation}\label{eq:step4A:defXis}
    \x_i\in\ver \skb  \qquad\text{and}\qquad m_\skb(\x_i)=1. 
\end{equation}
The following lemma illustrates that for a  subset thereof the values of $B$ and $C$ differ and give rise to the existence of a disjoint union of $k$-boxes with respect to which the vertex has a negative multiplicity.

\begin{lemma}\label{le:step4A:existenceRi}
Let $\D$ be a dense countably infinite mesh in $\II^n$ and fix some integer $k\in[n]$.  
Let $B,C \colon \D \to \II$ be functions with $C \le B$, $L_k^{(C,B)}(\dub)\ge 0$ for all $\dub\in \R_k(\D)$, and $\gkD^{(C,B)}(\dd)=0$ for all $\dd\in\D$. 
Furthermore, assume that $C(\vv) = B(\vv)$ for all vertices $\vv$ of the unit cube $\II^n$ and that there exists a $k$-box $\skb\in\R_k(\D)$ with
\begin{equation}\label{eq:step4A:kincrC}
    \Lk{C,C}(\skb)=v< 0.
\end{equation}
Then there exists $s\in[2^{k-1}]$   
such that for each $i\in[s]$ 
there exist a vertex $\x_i\in\ver \skb$ and a finite disjoint union of $k$-boxes $\dub_i\in\R_k(\D)$ with 
\begin{equation}\label{eq:step4A:propXis}
C(\x_i)<B(\x_i), \qquad m_{\skb}(\x_i)=1, \qquad m_{\dub_i}(\x_i)<0\qquad \text{and}\qquad \frac{\LkCB(\dub_i)}{|m_{\dub_i}(\x_i)|}<\frac{|v|}{s}.
\end{equation}
For all other $\x_i \in \ver \skb$ with $m_{\skb}(\x_i)=1$ it holds that $C(\x_i) = B(\x_i)$.
\end{lemma}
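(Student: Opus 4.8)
The plan is to compare, on the single $k$-box $\skb$, the quantity $\Lk{C,B}(\skb)$ with $\Lk{C,C}(\skb)=V_{C,k}(\skb)$, and then to invoke the hypothesis $\gkD^{(C,B)}\equiv 0$ at the vertices that turn out to matter. First I would observe that for a single $k$-box every vertex $\vv\in\ver\skb$ satisfies $m_\skb(\vv)=\sign_\skb(\vv)\in\{-1,1\}$, so by the definitions of $\Lk{C,B}$ and of $V_{C,k}$,
\[
\Lk{C,B}(\skb)-\Lk{C,C}(\skb)=\sum_{\substack{\vv\in\ver\skb\\ m_\skb(\vv)=1}}\bigl(B(\vv)-C(\vv)\bigr).
\]
Since $\Lk{C,B}(\skb)\ge 0$ by hypothesis and $\Lk{C,C}(\skb)=v<0$ by \eqref{eq:step4A:kincrC}, the right-hand side is at least $-v=|v|>0$. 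In particular the positive-multiplicity vertices of $\skb$ cannot all satisfy $C(\vv)=B(\vv)$.

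Next I would \emph{define} $s$ to be the number of vertices $\vv\in\ver\skb$ with $m_\skb(\vv)=1$ and $C(\vv)<B(\vv)$, and relabel these vertices as $\x_1,\dots,\x_s$. From the previous step $s\ge 1$, and since $\skb$ has exactly $2^{k-1}$ vertices of positive multiplicity we get $s\le 2^{k-1}$, so $s\in[2^{k-1}]$; moreover, by the very choice of $s$, every remaining vertex $\vv\in\ver\skb$ with $m_\skb(\vv)=1$ satisfies $C(\vv)=B(\vv)$, which is the last assertion of the lemma.

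Finally, I would fix $i\in[s]$ and note that $\x_i\in\D$ (being a vertex of $\skb\in\R_k(\D)$) and that $\x_i$ is not a vertex of $\II^n$: otherwise $C(\x_i)=B(\x_i)$ by assumption, contradicting $C(\x_i)<B(\x_i)$. Hence Lemma~\ref{le:KBoxesWithMRFromZ2}, in its mesh version, provides some $\dub\in\R_k(\D)$ with $m_\dub(\x_i)<0$, so the infimum in the definition of $\POkD^{(C,B)}(\x_i)$ is taken over a non-empty index set. Since $B(\x_i)-C(\x_i)>0$, the equality $\gkD^{(C,B)}(\x_i)=\min\{\POkD^{(C,B)}(\x_i),B(\x_i)-C(\x_i)\}=0$ forces $\POkD^{(C,B)}(\x_i)=0$. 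Because $|v|/s>0$, the definition of the infimum then yields a finite disjoint union $\dub_i\in\R_k(\D)$ with $m_{\dub_i}(\x_i)<0$ and $\Lk{C,B}(\dub_i)/|m_{\dub_i}(\x_i)|<|v|/s$, which together with $C(\x_i)<B(\x_i)$ and $m_\skb(\x_i)=1$ establishes \eqref{eq:step4A:propXis}. I do not expect a genuine obstacle here; the only points requiring care are that $s$ must be taken as \emph{exactly} this count, so that $s\in[2^{k-1}]$ and the claim about all other positive-multiplicity vertices hold simultaneously, and that the infimum set defining $\POkD^{(C,B)}(\x_i)$ be non-empty, so that the value $0$ genuinely means arbitrarily small achievable ratios rather than the empty-infimum convention.
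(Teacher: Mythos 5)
Your proof is correct and follows essentially the same route as the paper's: establish via $\Lk{C,B}(\skb)\ge 0 > v = \Lk{C,C}(\skb)$ that some positive-multiplicity vertex has $C<B$, define $s$ as the count of such vertices, then use $\gkD^{(C,B)}(\x_i)=0$ together with Lemma~\ref{le:KBoxesWithMRFromZ2} (which guarantees the infimum set for $\POkD^{(C,B)}(\x_i)$ is non-empty) to extract the $\dub_i$ with ratio below $|v|/s$. The only cosmetic difference is that you compute the difference $\Lk{C,B}(\skb)-\Lk{C,C}(\skb)$ explicitly where the paper argues by a short contradiction, but the content is identical.
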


\begin{proof}
Let $\skb\in\R_k(\D)$ 
be a $k$-box with $\Lk{C,C}(\skb)=v<0$,
and denote by $\x_i$ its vertices with positive multiplicity, i.e., for each $i\in[2^{k-1}]$ we have
$\x_i\in\ver \skb$ and $m_{\skb}(\x_i)=1$  (as in~\eqref{eq:step4A:defXis}).

Since $\gkD^{(C,B)}(\dd)=\min\{\POkD^{(C,B)}(\dd), B(\dd)-C(\dd)\}=0$ for all $\dd\in\D$, this holds in particular also for all $\x_i$. Assuming that $B(\x_i)-C(\x_i)=0$ for all vertices $\x_i$ with $i\in[2^{k-1}]$,
leads, on the basis of the assumptions for $C$ (compare  also Proposition~\ref{pr:step3A}~(iii)), to the contradiction
\begin{equation*}
0\le \Lk{C,B}(\skb)=\Lk{C,C}(\skb)<0.
\end{equation*}  
 We may therefore assume that (after a possible rearrangement of the vertices) there exists $s\in[2^{k-1}]$ such that
\begin{equation}\label{eq:gamma_CB_kD}
    \gkD^{(C,B)}(\x_i)=
\begin{cases}
\POkDCB(\x_i)=0<B(\x_i)-C(\x_i) & \text{if } i\in[ s],\\
B(\x_i)-C(\x_i)=0 & \text{if } i\in [2^{k-1}]\setminus [s].
\end{cases}
\end{equation}
Since for each $i\in[s]$ we have $B(\x_i) > C(\x_i)$, the point $\x_i$ is not a vertex of the unit cube $\II^n$, and by Lemma~\ref{le:KBoxesWithMRFromZ2} there exists an $\dub'_i\in\R_k(\D)$ with $m_{\dub'_i}(\x_i) <0$. Since, for all $\dd\in\D$, the function $\POkDCB$ is given by 
\begin{equation*}
    \POkDCB(\dd)=\inf_{\substack{\dub\in\R_k(\D),\\ m_\dub(\dd)<0}} \frac{\LkCB(\dub)}{|m_\dub(\dd)|},
\end{equation*}
we can further conclude that, for each $i\in[s]$, the infimum for $\POkDCB(\x_i)$ is not taken over the empty set, implying that there exists  an $\dub_i\in\R_k(\D)$   with
\begin{equation*}
    m_{\dub_i}(\x_i)<0 \qquad\text{and}\qquad \frac{\LkCB(\dub_i)}{|m_{\dub_i}(\x_i)|}<\frac{|v|}{s}
\end{equation*}
while $m_{\skb}(\x_i)=1$ for $i\in[s]$ is trivially fulfilled.
\end{proof}

Under the assumptions of Lemma~\ref{le:step4A:existenceRi} we obtain $\dub_i\in\R_k(\D)$ with $i\in[s]$ for some
$s\in[2^{k-1}]$, from which additional finite disjoint unions of $k$-boxes  $\widehat{\dub}$ and $\widehat{\dub}_i$ can be constructed. Taking into account that each corresponding vertex $\x_i\in\ver \skb$ fulfills \eqref{eq:step4A:propXis}, and in particular also $C(\x_i)<B(\x_i)$ for all $i\in[s]$, we define the following additional disjoint unions of $k$-boxes putting, for $i\in[s]$,
\begin{align}
m & =|m_{\dub_1}(\x_1)|\cdot\ldots\cdot|m_{\dub_s}(\x_s)|
\qquad \text{and} \qquad
    m_i =\frac{m}{|m_{\dub_i}(\x_i)|},\notag\\[1ex]
    \widehat{\dub} &=\left(\bigsqcup_{t=1}^m \skb\right)\sqcup\left(\bigsqcup_{t=1}^{m_1} \dub_1\right)\sqcup\dots \sqcup \left(\bigsqcup_{t=1}^{m_s} \dub_s\right)=\left(\bigsqcup_{t=1}^m \skb\right)\sqcup\left(\bigsqcup_{l=1}^{s} \left(\bigsqcup_{t=1}^{m_l} \dub_l\right)\right)
    \label{eq:step4A:DefR0},\\
    \widehat{\dub}_i &=\left(\bigsqcup_{t=1}^m \skb\right)\sqcup\left(\bigsqcup_{\substack{l=1\\ l\neq i}}^{s} \left(\bigsqcup_{t=1}^{m_l} \dub_l\right)\right). \label{eq:step4A:DefRi}
\end{align}
For every $\dd\in\D$, the multiplicities with respect to $\widehat{\dub}$ and $\widehat{\dub}_i$, for $i\in[s]$, can be evaluated as
\begin{equation}
    m_{\widehat{\dub}}(\dd)=m\cdot m_\skb(\dd)+\sum_{l=1}^s m_l\cdot m_{\dub_l}(\dd) \qquad \text{and} \qquad
    m_{\widehat{\dub}_i}(\dd)=m\cdot m_\skb(\dd)+\sum_{l\in[s]\setminus\{i\}} m_l\cdot m_{\dub_l}(\dd). \label{eq:mRhati}
\end{equation}
In particular, for each vertex $\x_i$ of the $k$-box $\skb$ with $m_{\skb}(\x_i)=1$ and $i\in[s]$ we obtain
\begin{align*}
    m_{\widehat{\dub}}(\x_i)&=m\cdot \underbrace{m_\skb(\x_i)}_{=1}+\sum_{l=1}^s m_l\cdot m_{\dub_l}(\x_i) \notag
    \\&
        = m + \underbrace{m_i\cdot \underbrace{m_{\dub_i}(\x_i)}_{<0}}_{=-m} 
            +\sum_{l\in[s]\setminus\{i\}} m_l\cdot m_{\dub_l}(\x_i)=\sum_{l\in[s]\setminus\{i\}} m_l\cdot m_{\dub_l}(\x_i)
\end{align*}
and
\begin{equation}\label{eq:step4A:mRi(xi)}
    m_{\widehat{\dub}_i}(\x_i) =m\cdot m_\skb(\x_i)+\sum_{l\in[s]\setminus\{i\}} m_l\cdot m_{\dub_l}(\x_i)=m+m_{\widehat{\dub}}(\x_i).
\end{equation}

Having disjoint unions of $k$-boxes $\widehat{\dub}$ and $\widehat{\dub}_i$ for $i\in[s]$  at hand, we are now interested in identifying upper bounds for $\LkCB(\widehat{\dub})$ and $\LkCB(\widehat{\dub}_i)$.  

\begin{proposition}\label{pr:step4A:upperboundsLkRhats}
Let $\D$ be a dense countably infinite mesh in $\II^n$ and fix some integer $k\in[n]$. 
Let $B,C \colon \D \to \II$ be functions with $C \le B$, $L_k^{(C,B)}(\dub)\ge 0$ for all $\dub\in \R_k(\D)$, and $\gkD^{(C,B)}(\dd)=0$ for all $\dd\in\D$.
Furthermore, assume that $C(\vv) = B(\vv)$ for all vertices $\vv$ of the unit cube $\II^n$ and that there exists a $k$-box $\skb\in\R_k(\D)$ with
$\Lk{C,C}(\skb)=v<0$ such that there are vertices $\x_i$ with $m_{\skb}(\x_i)=1$ and $C(\x_i)<B(\x_i)$ for all $i\in[s]$
and some $s\in[2^{k-1}]$,
while for all other $\x_i \in \ver \skb$ with $m_{\skb}(\x_i)=1$ the equality $C(\x_i) = B(\x_i)$ holds.
\begin{itemize} 
\item[\textup{(i)}] For each $i\in[s]$ and each $\widehat{\dub}_i$ 
as defined by~\eqref{eq:step4A:DefRi} we obtain
\begin{equation}\label{eq:step4A:upperboundsLkRhats:Ri}
    \LkCB(\widehat{\dub}_i)\le m\cdot \big(B(\x_i)-C(\x_i)\big)+m\cdot \Lk{C,C}(\skb)
        +\sum_{l\in[s]\setminus\{i\}} m_l\cdot \LkCB(\dub_l).
\end{equation}
\item[\textup{(ii)}] For $\widehat{\dub}$ as defined by~\eqref{eq:step4A:DefR0} we get
\begin{equation}\label{eq:step4A:upperboundsLkRhats:R0}
    \LkCB(\widehat{\dub})\le m\cdot \Lk{C,C}(\skb)+\sum_{l=1}^s m_l\cdot \LkCB(\dub_l).
\end{equation}
\end{itemize}
\end{proposition}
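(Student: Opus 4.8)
The plan is to split $\Lk{C,B}$ into a ``$k$-volume part'' and a ``defect part'' and treat the two separately. For every $\dub\in\R_k(\D)$ one has, directly from Definition~\ref{prel:def:Lfunction} together with $\Lk{C,C}(\dub)=V_{C,k}(\dub)$, the identity $\Lk{C,B}(\dub)=V_{C,k}(\dub)+P(\dub)$, where $P(\dub):=\sum_{\dd\in\D,\,m_\dub(\dd)>0}m_\dub(\dd)\bigl(B(\dd)-C(\dd)\bigr)\ge 0$; equivalently $P(\dub)=\sum_{\dd\in\D}\max\{0,m_\dub(\dd)\}\bigl(B(\dd)-C(\dd)\bigr)$. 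Since $V_{C,k}$ is additive and positively homogeneous under formal disjoint unions, \eqref{eq:step4A:DefR0} gives $V_{C,k}(\widehat{\dub})=m\,V_{C,k}(\skb)+\sum_{l=1}^{s}m_l\,V_{C,k}(\dub_l)=m\,\Lk{C,C}(\skb)+\sum_{l=1}^{s}m_l\bigl(\Lk{C,B}(\dub_l)-P(\dub_l)\bigr)$, hence $\Lk{C,B}(\widehat{\dub})=m\,\Lk{C,C}(\skb)+\sum_{l=1}^{s}m_l\,\Lk{C,B}(\dub_l)+E$ with $E:=P(\widehat{\dub})-\sum_{l=1}^{s}m_l\,P(\dub_l)$. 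Thus \eqref{eq:step4A:upperboundsLkRhats:R0} reduces to $E\le 0$, and, by the same computation applied to \eqref{eq:step4A:DefRi}, \eqref{eq:step4A:upperboundsLkRhats:Ri} reduces to $E_i\le m\bigl(B(\x_i)-C(\x_i)\bigr)$ with $E_i:=P(\widehat{\dub}_i)-\sum_{l\in[s]\setminus\{i\}}m_l\,P(\dub_l)$.

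These two remaining inequalities involve multiplicities only. From $P(\dub)=\sum_\dd\max\{0,m_\dub(\dd)\}\bigl(B(\dd)-C(\dd)\bigr)$ one gets $E=\sum_{\dd\in\D}\bigl(B(\dd)-C(\dd)\bigr)\bigl[\max\{0,m_{\widehat{\dub}}(\dd)\}-\sum_{l=1}^{s}m_l\max\{0,m_{\dub_l}(\dd)\}\bigr]$, and since $B-C\ge 0$ it suffices to check, for each $\dd\in\D$ with $B(\dd)>C(\dd)$, that $\max\{0,m_{\widehat{\dub}}(\dd)\}\le\sum_{l=1}^{s}m_l\max\{0,m_{\dub_l}(\dd)\}$. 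Substituting $m_{\widehat{\dub}}(\dd)=m\,m_{\skb}(\dd)+\sum_l m_l m_{\dub_l}(\dd)$ from \eqref{eq:mRhati} and using only the elementary facts $\max\{0,\sum_l a_l\}\le\sum_l\max\{0,a_l\}$ and $\max\{0,a+b\}\le a+\max\{0,b\}$ for $a\ge 0$, the cases $m_{\skb}(\dd)\in\{0,-1\}$ are immediate. The only case that uses the hypothesis is $m_{\skb}(\dd)=1$, i.e.\ $\dd$ is a positive vertex of $\skb$: if $C(\dd)=B(\dd)$ this $\dd$ contributes $0$ to $E$; and if $\dd=\x_j$ with $j\in[s]$, then the exact cancellation $m\,m_{\skb}(\x_j)+m_j\,m_{\dub_j}(\x_j)=m-m=0$ removes the $m$ copies of $\skb$, leaving $m_{\widehat{\dub}}(\x_j)=\sum_{l\ne j}m_l m_{\dub_l}(\x_j)$, which is dominated as required. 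This cancellation is exactly what keeps $\Lk{C,C}(\skb)$ (the negative number $v$) rather than $\Lk{C,B}(\skb)$ on the right-hand side.

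For \eqref{eq:step4A:upperboundsLkRhats:Ri} the identical estimate applies to $\widehat{\dub}_i$, the one difference being at $\dd=\x_i$: now $\dub_i$ is not a summand of $\widehat{\dub}_i$, so the cancellation above does not occur and by \eqref{eq:step4A:mRi(xi)} one has $m_{\widehat{\dub}_i}(\x_i)=m+m_{\widehat{\dub}}(\x_i)=m+\sum_{l\ne i}m_l m_{\dub_l}(\x_i)$; applying $\max\{0,a+b\}\le a+\max\{0,b\}$ with $a=m$ bounds the contribution of $\x_i$ to $E_i$ by $m\bigl(B(\x_i)-C(\x_i)\bigr)$, precisely the extra term in \eqref{eq:step4A:upperboundsLkRhats:Ri}. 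At a vertex $\x_j$ with $j\in[s]\setminus\{i\}$ the block $\dub_j$ is still present in $\widehat{\dub}_i$, so the cancellation there is unaffected and that point contributes $\le 0$. Summing the pointwise estimates over $\dd$ yields $E\le 0$ and $E_i\le m\bigl(B(\x_i)-C(\x_i)\bigr)$, and combining with the first paragraph proves both parts.

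I expect the main obstacle to be bookkeeping rather than conceptual: one must track, point by point, which copies of $\skb$ and of the $\dub_l$ enter the multiplicity of $\widehat{\dub}$ and of $\widehat{\dub}_i$, invoke the cancellation $m-m=0$ at each $\x_i$ in the right place (it is present for $\widehat{\dub}$ at every $\x_i$ and for $\widehat{\dub}_i$ at $\x_j$ with $j\ne i$, but absent for $\widehat{\dub}_i$ at $\x_i$), and confirm that the single uncancelled block of $m$ copies of $\skb$ in $\widehat{\dub}_i$ produces the term $m\bigl(B(\x_i)-C(\x_i)\bigr)$ and nothing more. The splitting $\Lk{C,B}=V_{C,k}+P$ is what makes this bookkeeping linear and keeps the value $v=\Lk{C,C}(\skb)$ attached to $C$ throughout.
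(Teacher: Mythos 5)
Your proof is correct and takes a genuinely different, and in some respects cleaner, route than the paper's. The paper works directly with the sums of $\max\{m_\dub(\dd)B(\dd),\,m_\dub(\dd)C(\dd)\}$, expanding case by case according to the sign of $m_{\widehat{\dub}_i}(\dd)$ and $m_{\widehat{\dub}}(\dd)$ at each $\dd$; crucially, its proof of part~(ii) establishes $m_{\widehat{\dub}}(\x_j)\le 0$ for $j\in[s]$ by a contradiction argument that invokes Proposition~\ref{pr:step2}, part~(i), \emph{and} the quantitative property $\LkCB(\dub_l)/|m_{\dub_l}(\x_l)|<|v|/s$ coming from Lemma~\ref{le:step4A:existenceRi}. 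Your splitting $\Lk{C,B}(\dub)=V_{C,k}(\dub)+P(\dub)$ with $P(\dub)=\sum_\dd\max\{0,m_\dub(\dd)\}(B(\dd)-C(\dd))\ge 0$ pushes the whole proof into the defect term $P$: since $V_{C,k}$ is additive over formal disjoint unions, the inequalities become $E\le 0$ and $E_i\le m(B(\x_i)-C(\x_i))$, and these reduce to the pointwise estimate $\max\{0,m_{\widehat{\dub}}(\dd)\}\le\sum_l m_l\max\{0,m_{\dub_l}(\dd)\}$ (and its analogue for $\widehat{\dub}_i$), which follows from $\max\{0,\cdot\}$ being subadditive, the sign of $m\cdot m_\skb(\dd)$, and the exact cancellation $m\cdot m_\skb(\x_j)+m_j m_{\dub_j}(\x_j)=0$ at each $\x_j$. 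As a bonus this proof uses nothing beyond $m_{\dub_l}(\x_l)<0$ from \eqref{eq:step4A:propXis}, so it does not need Proposition~\ref{pr:step2}, does not use part~(i) to prove part~(ii), and does not need the quantitative bound $\LkCB(\dub_l)/|m_{\dub_l}(\x_l)|<|v|/s$ — a mild strengthening. Both approaches isolate the same structural fact (the cancellation of the $m$ copies of $\skb$ against $m_j$ copies of $\dub_j$ at $\x_j$); yours makes the bookkeeping linear by moving the volume term out of the way first, which is why it reads shorter.
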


\begin{proof}
We first focus on the upper bound for $\LkCB(\widehat{\dub}_i)$ for some arbitrary but fixed $i\in[s]$.
Due to $C\le B$, and taking into account~\eqref{eq:LKonD}, we can express 
\begin{equation*}
\LkCB(\widehat{\dub}_i)
    = \sum_{\dd\in\D}\max\{m_{\widehat{\dub}_i}(\dd)B(\dd), m_{\widehat{\dub}_i}(\dd)C(\dd)\},
\end{equation*}
leading to the following equivalent expression of~\eqref{eq:step4A:upperboundsLkRhats:Ri} 
\begin{align*}
    \sum_{\dd\in\D}\max\{m_{\widehat{\dub}_i}(\dd)&\cdot B(\dd), m_{\widehat{\dub}_i}(\dd)\cdot C(\dd)\} 
       \le m\cdot \left(B(\x_i)-C(\x_i)\right)+m\cdot \sum_{\dd\in\D}m_{\skb}(\dd)\cdot C(\dd)\\
        &+\sum_{l\in[s]\setminus\{i\}} m_l\cdot 
            \left(\sum_{\dd\in\D}\max\{m_{\dub_l}(\dd)\cdot B(\dd), m_{\dub_l}(\dd)\cdot C(\dd)\}\right).
\end{align*}
We shall investigate the contribution of each $\dd\in\D$ to both sides of the above equivalent form of inequality~\eqref{eq:step4A:upperboundsLkRhats:Ri} by distinguishing the following three cases: (1) $\dd=\x_i$, (2) $\dd=\x_j$, with $j\in[s]\setminus\{i\}$ and (3) $\dd\in\D\setminus\{\x_j\mid j\in[s]\}$, i.e., whether or not $\dd$ is one of the vertices of $\skb$ with positive multiplicity and different values at $B$ and $C$.

\emph{Case} $1$: Suppose $\dd=\x_i$, i.e., $m_\skb(\x_i)=1$ and $B(\x_i)>C(\x_i)$ (and $\POkDCB(\x_i)=0$). 
We consider two subcases. First assume that $m_{\widehat{\dub}_i}(\x_i)\ge 0$. By \eqref{eq:step4A:mRi(xi)}, for the contribution of $\x_i$ to $\LkCB(\widehat{\dub}_i)$ we obtain the following (in)equalities: 
    \begin{align*}
         m_{\widehat{\dub}_i}(\x_i)\cdot B(\x_i)
         &=m\cdot m_\skb(\x_i) \cdot B(\x_i)+\sum_{l\in[s]\setminus\{i\}} m_l\cdot m_{\dub_l}(\x_i)\cdot B(\x_i)\\
         &=m \cdot B(\x_i)+\sum_{l\in[s]\setminus\{i\}} m_l\cdot m_{\dub_l}(\x_i)\cdot B(\x_i)\\
        &= m (B(\x_i)-C(\x_i))+m\cdot m_\skb(\x_i)\cdot C(\x_i)+\sum_{l\in[s]\setminus\{i\}} m_l\cdot m_{\dub_l}(\x_i)\cdot B(\x_i)\\
        &\le m (B(\x_i)-C(\x_i))+m\cdot m_\skb(\x_i)\cdot C(\x_i)\\[1ex]
        &\quad{}+\sum_{l\in[s]\setminus\{i\}} m_l\cdot \max\{m_{\dub_l}(\x_i)\cdot B(\x_i),m_{\dub_l}(\x_i)\cdot C(\x_i)\}.
    \end{align*}
     Secondly, if $m_{\widehat{\dub}_i}(\x_i)< 0$, then for the contribution of $\x_i$ to $\LkCB(\widehat{\dub}_i)$ we obtain, taking into account $C(\x_i)< B(\x_i)$, 
    \begin{align*}
         m_{\widehat{\dub}_i}(\x_i)\cdot C(\x_i)
         &=m\cdot m_\skb(\x_i) \cdot C(\x_i)+\sum_{l\in[s]\setminus\{i\}} m_l\cdot m_{\dub_l}(\x_i) \cdot C(\x_i)\\
        &\le m\cdot  (B(\x_i)-C(\x_i))+m\cdot m_\skb(\x_i)\cdot C(\x_i)
        \\[1ex]
        &\quad{} 
        +\sum_{l\in[s]\setminus\{i\}} m_l\cdot \max\{m_{\dub_l}(\x_i)\cdot B(\x_i),m_{\dub_l}(\x_i)\cdot C(\x_i)\}.
    \end{align*}
    
\emph{Case} $2$: Consider $\dd=\x_j$ with $j\in[s]\setminus\{i\}$, i.e., $m_\skb(\x_j)=1$, and $C(\x_j)< B(\x_j)$. Assuming first that $m_{\widehat{\dub}_i}(\x_j)\ge 0$ we obtain for the contribution of $\x_j$ to $\LkCB(\widehat{\dub}_i)$ the following series of (in)equalities:  
        \begin{align*}
        m_{\widehat{\dub}_i}(\x_j)\cdot B(\x_j)&=m\cdot m_\skb(\x_j)\cdot B(\x_j)
                +\sum_{l\in[s]\setminus\{i\}} m_l\cdot m_{\dub_l}(\x_j)\cdot B(\x_j)\\
        &=m\cdot B(\x_j)+\underbrace{m_j\cdot m_{\dub_j}(\x_j)}_{=-m}\cdot B(\x_j)
                +\sum_{l\in[s]\setminus\{i,j\}} m_l\cdot m_{\dub_l}(\x_j) \cdot B(\x_j)\\
        &=\sum_{l\in[s]\setminus\{i,j\}} m_l\cdot m_{\dub_l}(\x_j)\cdot B(\x_j)\\
        &\le \sum_{l\in[s]\setminus\{i,j\}} m_l\cdot \max\{m_{\dub_l}(\x_j)\cdot B(\x_j), m_{\dub_l}(\x_j) \cdot C(\x_j)\}\\
        &= m\cdot m_\skb(\x_j)\cdot C(\x_j)-m \cdot C(\x_j)\\[1ex]
            &\quad{} +\sum_{l\in[s]\setminus\{i,j\}} m_l\cdot \max\{m_{\dub_l}(\x_j)\cdot B(\x_j), m_{\dub_l}(\x_j) \cdot C(\x_j)\}.
        \end{align*}
    Using the definition of $m$, it follows that
    \begin{align*}
       m_{\widehat{\dub}_i}(\x_j)\cdot B(\x_j) &\le m\cdot m_\skb(\x_j)\cdot C(\x_j)+m_j\cdot m_{\dub_j}(\x_j) \cdot C(\x_j)\\
            &\quad{} +\sum_{l\in[s]\setminus\{i,j\}} m_l\cdot \max\{m_{\dub_l}(\x_j)\cdot B(\x_j), m_{\dub_l}(\x_j) \cdot C(\x_j)\}\\
        &\le  m\cdot m_\skb(\x_j)\cdot C(\x_j) \\
        &\quad{} +\sum_{l\in[s]\setminus\{i\}} 
                m_l\cdot \max\{m_{\dub_l}(\x_j)\cdot B(\x_j), m_{\dub_l}(\x_j) \cdot C(\x_j)\}.
    \end{align*}
    Next assume that $m_{\widehat{\dub}_i}(\x_j)< 0$. Then the contribution of $\x_j$ to $\LkCB(\widehat{\dub}_i)$ satisfies 
    \begin{align*}
        m_{\widehat{\dub}_i}(\x_j)\cdot C(\x_j)&=m\cdot m_\skb(\x_j) \cdot C(\x_j)
                +\sum_{l\in[s]\setminus\{i\}} m_l\cdot m_{\dub_l}(\x_j)\cdot C(\x_j)\\
        &\le  m\cdot m_\skb(\x_j)\cdot C(\x_j) \\ 
        &\quad{} +\sum_{l\in[s]\setminus\{i\}} 
                m_l\cdot \max\{m_{\dub_l}(\x_j)\cdot B(\x_j), m_{\dub_l}(\x_j) \cdot C(\x_j)\}.
    \end{align*}

\emph{Case} $3$: 
Consider $\dd\in\D\setminus\{\x_1,\dots, \x_s\}$.
If $m_\skb(\dd)=1$ then $\dd$ is a vertex of $\skb$ with positive multiplicity and necessarily fulfilling $B(\dd)=C(\dd)$. As a consequence,
\begin{align*}
    \max\{m_{\widehat{\dub}_i}(\dd)&\cdot B(\dd), m_{\widehat{\dub}_i}(\dd)\cdot C(\dd)\}\\
    &=m_{\widehat{\dub}_i}(\dd)\cdot C(\dd)\\
    &=m\cdot m_{\skb}(\dd) \cdot C(\dd) + \sum_{l\in[s]\setminus\{i\}} m_l \cdot m_{\dub_l}(\dd)\cdot C(\dd)\\
    &=m\cdot m_{\skb}(\dd) \cdot C(\dd) 
        + \sum_{l\in[s]\setminus\{i\}} m_l \cdot \max\{m_{\dub_l}(\dd)\cdot B(\dd), m_{\dub_l}(\dd)\cdot C(\dd)\}.
\end{align*}
Otherwise $m_{\skb}(\dd)\in\{-1,0\}$, i.e., $m\cdot m_\skb(\dd) \cdot B(\dd)\le m\cdot m_\skb(\dd) \cdot C(\dd)$. Therefore, evaluating each term separately using \eqref{eq:mRhati}, we obtain
    \begin{align*}
    \max\{&m_{\widehat{\dub}_i}(\dd)\cdot B(\dd), m_{\widehat{\dub}_i}(\dd)\cdot C(\dd)\}\\[1ex]
        &\le \max\{m\cdot m_{\skb}(\dd)\cdot B(\dd), m\cdot m_{\skb}(\dd)\cdot C(\dd)\} \\
        &\quad{} + \sum_{l\in[s]\setminus\{i\}} m_l\cdot \max\{m_{\dub_l}(\dd)\cdot B(\dd),m_{\dub_l}(\dd)\cdot C(\dd)\}\\
        &=m\cdot m_{\skb}(\dd)\cdot C(\dd)+ \sum_{l\in[s]\setminus\{i\}} m_l\cdot \max\{m_{\dub_l}(\dd)\cdot B(\dd),m_{\dub_l}(\dd)\cdot C(\dd)\}.
        \end{align*}
Summarizing all three cases we have shown that~\eqref{eq:step4A:upperboundsLkRhats:Ri} holds since the contribution of any $\dd\in D$ to its left-hand side is surpassed by its contribution to the right-hand side of the inequality.

We now turn to $\LkCB(\widehat{\dub})$ and show that the inequality~\eqref{eq:step4A:upperboundsLkRhats:R0} holds  which, following~\eqref{eq:LKonD}, can be equivalently expressed by
\begin{equation*}
\begin{split}
    \sum_{\dd\in\D}\max&\{m_{\widehat{\dub}}(\dd) \cdot B(\dd), m_{\widehat{\dub}}(\dd) \cdot C(\dd)\}\\
        &\le \sum_{\dd\in\D} m\cdot m_{\skb}(\dd) \cdot C(\dd) + \sum_{l=1}^s m_l\cdot \left(\sum_{\dd\in\D}\max\{m_{\dub_l}(\dd)\cdot B(\dd), m_{\dub_l}(\dd)\cdot C(\dd) \}\right).
\end{split}
\end{equation*}
We again look at the contribution of each $\dd\in\D$ to both sides of this inequality and distinguish different cases for $\dd\in\D$.

\emph{Case} $1$: $\dd\in\{\x_1,\dots, \x_s\}$, i.e., $m_{\skb}(\x_j)=1$ and $C(\x_j)<B(\x_j)$ for each $j\in[s]$.
Assuming that $m_{\widehat{\dub}}(\x_j)>0$,  \eqref{eq:step4A:mRi(xi)} implies that also $m_{\widehat{\dub}_j}(\x_j)=m+m_{\widehat{\dub}}(\x_j)>0$. As a consequence,
\begin{equation*}\PMkD^{(C,B)}(\x_j)=\inf_{\substack{\dub\in\R_k(\D)\\ m_\dub(\x_j)>0}}\frac{\Lk{C,B}(\dub)}{|m_\dub(\x_j)|}\le \frac{\Lk{C,B}(\widehat{\dub}_j)}{|m_{\widehat{\dub}_j}(\x_j)|}.
\end{equation*}
Since $\POkDCB(\x_j)=0$, it follows from Proposition~\ref{pr:step2} that 
\begin{equation*}
    |m_{\widehat{\dub}_j}(\x_j)|\cdot(B(\x_j)-C(\x_j))\le |m_{\widehat{\dub}_j}(\x_j)|\cdot(\POkDCB(\x_j)+\PMkD^{(C,B)}(\x_j))\le \LkCB(\widehat{\dub}_j).
\end{equation*}
Moreover, by~\eqref{eq:step4A:upperboundsLkRhats:Ri} and taking into account~\eqref{eq:step4A:kincrC} and~\eqref{eq:step4A:propXis}, we may argue that
\begin{align*}
    m_{\widehat{\dub}_j}(\x_j)\cdot(B(\x_j)-C(\x_j))&\le \LkCB(\widehat{\dub}_j)\\
        &\le m\cdot (B(\x_j)-C(\x_j))+m\cdot \Lk{C,C}(\skb)+\sum_{l\in[s]\setminus\{j\}}m_l\cdot \LkCB(\dub_l)\\
        &<m\cdot (B(\x_j)-C(\x_j))+m\cdot v+\sum_{l\in[s]\setminus\{j\}} m_l\cdot |m_{\dub_l}(\x_l)|\cdot \frac{|v|}{s} \\
        &=m\cdot (B(\x_j)-C(\x_j))+m\cdot v+ m \frac{|v|}{s} (s-1)\\
        &=m\cdot (B(\x_j)-C(\x_j))- \frac{m|v|}{s}.
\end{align*}
Since $m_{\widehat{\dub}_j}(\x_j)=m+m_{\widehat{\dub}}(\x_j)$, we further obtain the contradiction
\begin{align*}
    0\le \underbrace{m_{\widehat{\dub}}(\x_j)}_{>0}\cdot (B(\x_j)-C(\x_j))< -\frac{m\cdot |v|}{s}<0,
\end{align*}
showing that necessarily $m_{\widehat{\dub}}(\x_j)\le 0$ for all $j\in[s]$.
Therefore, for the contribution of any $\x_j$ with $j\in[s]$
to the left-hand side of~\eqref{eq:step4A:upperboundsLkRhats:R0} we obtain 
\begin{align*}
    m_{\widehat{\dub}}(\x_j)\cdot C(\x_j)&=\left(m\cdot m_{\skb}(\x_j)+\sum_{l=1}^s m_l\cdot m_{\dub_l}(\x_j)\right)\cdot C(\x_j)\\
        &\le m\cdot m_\skb(\x_j)\cdot C(\x_j)+\sum_{l=1}^s m_l \cdot\max\{m_{\dub_l}(\x_j)\cdot B(\x_j),m_{\dub_l}(\x_j)\cdot C(\x_j)\}.
\end{align*}

\emph{Case} $2$:  $\dd\in\D\setminus\{\x_1,\dots, \x_s\}$. If $m_{\skb}(\dd)=1$ then $\dd$ is a vertex of~$\skb$ with positive multiplicity and necessarily fulfills $B(\dd)=C(\dd)$ (compare also~\eqref{eq:gamma_CB_kD}). Then $m_{\widehat{\dub}}(\dd) \cdot B(\dd)=m_{\widehat{\dub}}(\dd) \cdot C(\dd)$ and $m_{\dub_l}(\dd) \cdot B(\dd)=m_{\dub_l}(\dd) \cdot C(\dd)$ for all $l\in[s]$, 
and therefore trivially
\begin{align*}
    \max\{m_{\widehat{\dub}}(\dd) \cdot B(\dd)&, m_{\widehat{\dub}}(\dd) \cdot C(\dd)\}=m_{\widehat{\dub}}(\dd) \cdot C(\dd)\\
        &=m\cdot m_{\skb}(\dd) \cdot C(\dd) + \sum_{l=1}^s m_l \cdot m_{\dub_l}(\dd)\cdot C(\dd)\\
        &=m\cdot m_{\skb}(\dd) \cdot C(\dd) + \sum_{l=1}^s m_l \cdot \max\{m_{\dub_l}(\dd)\cdot B(\dd),m_{\dub_l}(\dd)\cdot C(\dd)\}.
\end{align*}
If  $m_{\skb}(\dd)\neq 1$ then $\dd$ is not a vertex of $\skb$ or it is a vertex with negative multiplicity to $\skb$, i.e., fulfills $m_{\skb}(\dd)\in\{0,-1\}$ since $\skb$ is a $k$-box.
Assume first that $m_{\widehat{\dub}}(\dd)\ge 0$. Then, due to $m_{\skb}(\dd)\in\{0,-1\}$, the contribution to the left-hand side of ~\eqref{eq:step4A:upperboundsLkRhats:R0} satisfies  
\begin{align*}
    m_{\widehat{\dub}}(\dd)\cdot B(\dd)&= m\cdot m_{\skb}(\dd)\cdot B(\dd) + \sum_{l=1}^s m_l\cdot m_{\dub_l}(\dd) \cdot B(\dd)\\
    &\le m\cdot m_{\skb}(\dd)\cdot C(\dd) + \sum_{l=1}^s m_l\cdot m_{\dub_l}(\dd \cdot B(\dd))\\
    &\le m\cdot m_{\skb}(\dd)\cdot C(\dd) + \sum_{l=1}^s m_l\cdot \max\{m_{\dub_l}(\dd) \cdot B(\dd),m_{\dub_l}(\dd) \cdot C(\dd)\}.
\end{align*}
If $m_{\widehat{\dub}}(\dd)< 0$ then for the contribution to the left-hand side of ~\eqref{eq:step4A:upperboundsLkRhats:R0} we obtain 
\begin{align*}
    m_{\widehat{\dub}}(\dd)\cdot C(\dd)&= m\cdot m_{\skb}(\dd)\cdot C(\dd) + \sum_{l=1}^s m_l\cdot m_{\dub_l}(\dd) \cdot C(\dd)\\
    &\le m\cdot m_{\skb}(\dd)\cdot C(\dd) + \sum_{l=1}^s m_l\cdot \max\{m_{\dub_l}(\dd) \cdot B(\dd),m_{\dub_l}(\dd) \cdot C(\dd)\},
\end{align*}
verifying that inequality~\eqref{eq:step4A:upperboundsLkRhats:R0} holds. 
\end{proof}

We have now collected all the necessary details for showing that the function $C$ obtained as the pointwise limit of the sequence $(A^{(i)})_{i\in\NN}$ does not only have all the properties shown in Proposition~\ref{pr:step3A} but is also $k$-increasing on $\D$. 

\begin{proposition}\label{pr:step4A}
Let $\D$ be a dense countably infinite mesh in $\II^n$ and fix some integer $k\in[n]$. 
Let $A, B \colon \D \to \II$ be functions with $A \le B$ and $L_k^{(A,B)}(\dub)\ge 0$ for all $\dub\in \R_k(\D)$. Furthermore, assume that $A(\vv) = B(\vv)$ for all vertices $\vv$ of the unit cube $\II^n$. 
Let $C\colon\D\to\II$ be the pointwise limit of the sequence $(A^{(i)})_{i\in\NN}$ defined in~\eqref{eq:step3A:DefAis}.
Then $C$ is $k$-increasing on $\D$, i.e., $\Lk{C,C}(\dub)\ge 0 \text{ for all } \dub\in\R_k(\D)$.
\end{proposition}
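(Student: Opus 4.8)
The plan is a proof by contradiction: assume $k$-increasingness fails, reduce via additivity to a single offending $k$-box, and then feed the estimates of Proposition~\ref{pr:step4A:upperboundsLkRhats} into the arithmetic supplied by Lemma~\ref{le:step4A:existenceRi} to reach an impossibility. First I would record that $C$ inherits all the properties the downstream statements require: by Proposition~\ref{pr:step3A} we have $A\le C\le B$ on $\D$, $\LkCB(\dub)\ge 0$ for all $\dub\in\R_k(\D)$, and $\gkD^{(C,B)}(\dd)=0$ for all $\dd\in\D$; and since $A(\vv)=B(\vv)$ at every vertex $\vv$ of $\II^n$ by hypothesis and $A\le C\le B$, also $C(\vv)=B(\vv)$ there. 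Thus the pair $(B,C)$ satisfies the hypotheses of both Lemma~\ref{le:step4A:existenceRi} and Proposition~\ref{pr:step4A:upperboundsLkRhats}.

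Now suppose $C$ is not $k$-increasing on $\D$, so $\Lk{C,C}(\dub)<0$ for some $\dub\in\R_k(\D)$. Writing $\dub$ as a formal disjoint union of $k$-boxes and using that $\Lk{C,C}=V_{C,k}$ is additive with respect to such unions, at least one constituent $k$-box $\skb$ must already satisfy $\Lk{C,C}(\skb)=v<0$. Applying Lemma~\ref{le:step4A:existenceRi} to $\skb$ produces $s\in[2^{k-1}]$, vertices $\x_1,\dots,\x_s\in\ver\skb$ with $m_{\skb}(\x_i)=1$ and $C(\x_i)<B(\x_i)$, and finite disjoint unions $\dub_1,\dots,\dub_s\in\R_k(\D)$ with $m_{\dub_i}(\x_i)<0$ and $\LkCB(\dub_i)/|m_{\dub_i}(\x_i)|<|v|/s$, while $C=B$ at every other positive-multiplicity vertex of $\skb$. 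I then form $\widehat{\dub}\in\R_k(\D)$ as in~\eqref{eq:step4A:DefR0}, with $m=\prod_{l=1}^s|m_{\dub_l}(\x_l)|\ge 1$ and $m_l=m/|m_{\dub_l}(\x_l)|\ge 1$, and invoke Proposition~\ref{pr:step4A:upperboundsLkRhats}(ii):
\[
\LkCB(\widehat{\dub})\le m\cdot\Lk{C,C}(\skb)+\sum_{l=1}^s m_l\cdot\LkCB(\dub_l).
\]
Since $\Lk{C,C}(\skb)=v$ and $0\le\LkCB(\dub_l)<|m_{\dub_l}(\x_l)|\cdot|v|/s$, multiplying the latter bound by $m_l>0$ and summing over the $s$ indices gives $\sum_{l=1}^s m_l\LkCB(\dub_l)<m|v|$, whence $\LkCB(\widehat{\dub})<mv+m|v|=0$ (recalling $v<0$, so $|v|=-v$). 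This contradicts $\LkCB(\dub)\ge 0$ for all $\dub\in\R_k(\D)$ from Proposition~\ref{pr:step3A}(iii); hence no such $\skb$ exists and $C$ is $k$-increasing on $\D$.

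The genuinely difficult content has already been front-loaded into Lemma~\ref{le:step4A:existenceRi} and Proposition~\ref{pr:step4A:upperboundsLkRhats}, so the remaining work is essentially bookkeeping. The one point I would watch most carefully is strictness: the strict inequality $\LkCB(\dub_i)/|m_{\dub_i}(\x_i)|<|v|/s$ must survive multiplication by the positive integers $m_l$ and the sum over the nonempty index set $[s]$, so that $\LkCB(\widehat{\dub})$ comes out \emph{strictly} below $0$ rather than merely $\le 0$; and the telescoping $mv+m|v|=0$ must be read with $v<0$. Beyond that, one only needs the trivialities that $\widehat{\dub}$ indeed lies in $\R_k(\D)$ (it is a formal disjoint union of $k$-boxes with vertices in $\D$) and that the hypotheses transfer from the pair $(A,B)$ to the pair $(B,C)$, both of which are immediate.
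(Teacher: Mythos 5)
Your proof is correct and follows essentially the same path as the paper: derive from Proposition~\ref{pr:step3A} that the pair $(C,B)$ satisfies the hypotheses of Lemma~\ref{le:step4A:existenceRi} and Proposition~\ref{pr:step4A:upperboundsLkRhats}, reduce a putative violation to a single offending $k$-box $\skb$, then combine $\skb$ with the $\dub_i$ from Lemma~\ref{le:step4A:existenceRi} into $\widehat{\dub}$ and use the bound~\eqref{eq:step4A:upperboundsLkRhats:R0} together with $m_l|m_{\dub_l}(\x_l)|=m$ to get the strict contradiction $0\le\LkCB(\widehat{\dub})<m(v+|v|)=0$. The only cosmetic difference is that you spell out the additivity reduction to a single $k$-box, which the paper compresses into a ``without loss of generality.''
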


\begin{proof}
Note that $C=\lim_{i\to\infty} A^{(i)}$ implies that for each $\dd\in\D$
\begin{equation*}
\gkD^{(C,B)}(\dd)=\min\{\POkD^{(C,B)}(\dd), B(\dd)-C(\dd)\}=0,   
\end{equation*}
and $\LkCB(\dub)\ge 0 $ for all $\dub\in\R_k(\D)$, according to Proposition~\ref{pr:step3A}. 

Assume that $C$ is not $k$-increasing, i.e., there exists a $k$-box $\skb$ such that 
$\Lk{C,C}(\skb)=v<0$,
where the vertices $\x_i$ of $\skb$ fulfill $m_{\skb}(\x_i)=1$ and $C(\x_i)<B(\x_i)$ for all $i\in[s]$
and some $s\in[2^{k-1}]$,
while $C(\x_i)=B(\x_i)$ for all other vertices with $m_{\skb}(\x_i)=1$. 
Note that $C(\vv) = B(\vv)$ for all vertices $\vv$ of the unit cube $\II^n$ since $A \le C \le B$ and $A(\vv) = B(\vv)$.
Then, following Lemma~\ref{le:step4A:existenceRi}, there exist finite disjoint unions of $k$-boxes $\dub_i\in\R_k(\D)$ such that for all
$i\in[s]$
\begin{equation*}
    m_{\dub_i}(\x_i)<0\qquad \text{and}\qquad \frac{\LkCB(\dub_i)}{|m_{\dub_i}(\x_i)|}<\frac{|v|}{s}.
\end{equation*}
Combining the $k$-box $\skb$ and the corresponding disjoint unions of $k$-boxes $\dub_i$, we introduce an additional disjoint union $\widehat{\dub}\in\R_k(\D)$ by means of~\eqref{eq:step4A:DefR0}. 
Proposition~\ref{pr:step4A:upperboundsLkRhats} provides us with an upper bound for $\Lk{C,B}(\widehat{\dub})$ by means of~\eqref{eq:step4A:upperboundsLkRhats:R0}, implying the contradiction
\begin{align*}
    0\le \LkCB(\widehat{\dub})&\le m\cdot \Lk{C,C}(\skb)+\sum_{l=1}^s m_l \cdot \LkCB(\dub_l)\\
    &<m\cdot v + \sum_{l=1}^s m_l \cdot \frac{|v|}{s}\cdot |m_{\dub_l}(\x_l)|\\
    &=m\cdot (v+|v|)=0,
\end{align*}
showing that $C$ has to be $k$-increasing on $\D$. 
\end{proof}

\section{Construction of $C$ from above and discussion of its properties}\label{se:CbyloweringB_short}

The results of the previous section already prove the existence of a $k$-increasing $n$-variate function~$C$ on a dense countable mesh $\D$ by a construction from below, i.e., starting from the lower bound~$A$. A rather natural question is whether or not the construction of a, possibly different, function $C$ could also be initiated from the upper bound~$B$. This question can be answered to the positive. In this section we briefly sketch the proof steps and the construction, pointing to possibly different arguments needed in the proofs in comparison to the results related to the construction of a function~$C$ from the lower bound~$A$. 

In the single construction step, function $B$ defined on the mesh $\D$ is reduced at an arbitrary but fixed point $\x\in\D$ by means of $\dkD^{(A,B)}$, leading to a smaller function $B'$ still satisfying $\Lk{A,B'}(\dub)\ge 0$ for all $\dub\in\R_k(D)$. The proof of the following proposition formalizing this step is in complete analogy to the proof of Proposition~\ref{pr:step1A}.

\begin{proposition}\label{pr:step1B}
Let $\D$ be a dense countably infinite mesh in $\II^n$ and fix some integer $k\in[n]$.  
Let $A, B\colon \D \to \II$ be functions with $A \le B$  and $L_k^{(A,B)}(\dub)\ge 0$ for all $\dub\in \R_k(\D)$.  Fix a point $\x \in \D$ and define the function $B'\colon \D \to \II$ by
\begin{equation*}
B'(\uu) =  \begin{cases}
            B(\uu) & \text{if }   \uu \ne \x, \\
            B(\x) - \dkD^{(A,B)}(\x) & \text{if }   \uu = \x.
            \end{cases}    
\end{equation*}
Then we have that $A\le B' \le B$, that the pair $(A, B')$ satisfies the condition $L_k^{(A,B')}(\dub)\ge 0$ for all $\dub\in \R_k(\D)$, and that $\dkD^{(A,B')}(\x) = 0.$
\end{proposition}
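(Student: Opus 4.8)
The plan is to mirror the proof of Proposition~\ref{pr:step1A} line by line, interchanging the roles of $A$ and $B$, reversing the sign of the modification at $\x$, and replacing $\POkD$, $\gkD$ throughout by $\PMkD$, $\dkD$. There are three things to verify: the two-sided bound, the inequality $\Lk{A,B'}(\dub)\ge 0$, and the vanishing of $\dkD^{(A,B')}(\x)$.

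First I would check $A\le B'\le B$. For $\uu\ne\x$ this is immediate, and at $\x$ we have $B'(\x)=B(\x)-\dkD^{(A,B)}(\x)\le B(\x)$ because $\dkD^{(A,B)}(\x)\ge 0$, and $B'(\x)\ge B(\x)-(B(\x)-A(\x))=A(\x)$ because $\dkD^{(A,B)}(\x)\le B(\x)-A(\x)$ by its very definition. In particular $A\le B'$, so Lemma~\ref{lem:claim-green} and the mesh version of formula~\eqref{eq:LKonD} remain available for the pair $(A,B')$.

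Next I would establish $\Lk{A,B'}(\dub)\ge 0$ for every $\dub\in\R_k(\D)$. Lowering $B$ at the single point $\x$ changes $\Lk{A,B}(\dub)$ only through the term attached to $\x$, and by Definition~\ref{prel:def:Lfunction} that term is present exactly when $m_\dub(\x)>0$. So if $m_\dub(\x)\le 0$ then $\Lk{A,B'}(\dub)=\Lk{A,B}(\dub)\ge 0$, while if $m_\dub(\x)>0$ then
\[
\Lk{A,B'}(\dub)=\Lk{A,B}(\dub)+m_\dub(\x)\bigl(B'(\x)-B(\x)\bigr)=\Lk{A,B}(\dub)-m_\dub(\x)\,\dkD^{(A,B)}(\x).
\]
Since $\dkD^{(A,B)}(\x)\le\PMkD^{(A,B)}(\x)\le \Lk{A,B}(\dub)/|m_\dub(\x)|$ and $|m_\dub(\x)|=m_\dub(\x)>0$, we get $m_\dub(\x)\,\dkD^{(A,B)}(\x)\le \Lk{A,B}(\dub)$, hence $\Lk{A,B'}(\dub)\ge 0$. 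Finally, for $\dkD^{(A,B')}(\x)$: taking the infimum of the displayed identity over all $\dub\in\R_k(\D)$ with $m_\dub(\x)>0$ gives $\PMkD^{(A,B')}(\x)=\PMkD^{(A,B)}(\x)-\dkD^{(A,B)}(\x)$, while $B'(\x)-A(\x)=\bigl(B(\x)-A(\x)\bigr)-\dkD^{(A,B)}(\x)$; subtracting the common constant $\dkD^{(A,B)}(\x)$ from both entries of the minimum yields $\dkD^{(A,B')}(\x)=\dkD^{(A,B)}(\x)-\dkD^{(A,B)}(\x)=0$. (If the index set $\{\dub\mid m_\dub(\x)>0\}$ is empty — for instance when $\x$ is a vertex of the cube — then $\PMkD^{(A,B)}(\x)=0$, so $\dkD^{(A,B)}(\x)=0$, $B'=B$, and the claim is trivial.)

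No genuinely new difficulty arises; the only point demanding care — the sole cosmetic difference from Proposition~\ref{pr:step1A} — is the sign bookkeeping: the modified point now enters $\Lk{A,B'}$ through the \emph{positive}-multiplicity sum rather than the negative one, so it is $\PMkD$ and $\dkD$ (not $\POkD$ and $\gkD$) that govern the estimate, and one must systematically use $m_\dub(\x)>0$ where the earlier proof used $m_\dub(\x)<0$.
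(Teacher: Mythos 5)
Your proof is correct and is exactly the ``analogous'' argument the paper alludes to (the paper states that the proof of Proposition~\ref{pr:step1B} follows in complete analogy to that of Proposition~\ref{pr:step1A} and does not write it out). You have carried out the sign bookkeeping correctly, replacing $\gkD$, $\POkD$, and the condition $m_{\dub}(\x)<0$ by $\dkD$, $\PMkD$, and $m_{\dub}(\x)>0$, and the edge case of an empty index set is handled properly.
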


Since $\D$ is a dense countably infinite mesh in $\II^n$ containing the points $\mathbf{0}$ and $\1$, its elements can be rearranged into a sequence $(\dd_i)_{i\in\NN}$. As a consequence, a function $C$ can be obtained as the pointwise limit of a sequence of functions $B^{(i)}\colon\D\to\II$ recursively defined by successively applying Proposition~\ref{pr:step1B}, i.e., putting $B^{(0)}= B$ and, for $i\geq 1$,
\begin{equation}
\label{eq:step3B:DefBis}
B^{(i)}(\dd) =  \begin{cases}
            B^{(i-1)}(\dd)  & \text{if }  \dd \ne \dd_i, \\
            B^{(i-1)}(\dd_i) - \dkD^{(A,B^{(i-1)})}(\dd_i)  & \text{if }  \dd = \dd_i,
            \end{cases}
\end{equation}
(compare also Proposition~\ref{pr:step3A}). The function $C$  constructed in this way has the following properties.
\begin{proposition}\label{pr:step3B}
Let $\D$ be a dense countably infinite mesh in $\II^n$ and fix some integer $k\in[n]$.
Let $A, B \colon \D \to \II$ be functions with $A \le B$ and  $L_k^{(A,B)}(\dub)\ge 0$ for all $\dub\in \R_k(\D)$. Then there exists $C\colon\D\to\II$ such that
\begin{itemize}
	\item[\textup{(i)}] $A \le C \le B$ on $\D$,
	\item[\textup{(ii)}] $\dkD^{(A,C)}(\dd) = 0$ for all $\dd \in \D$,
	\item[\textup{(iii)}] $\LkAC(\dub) \ge 0$ for all $\dub\in\R_k(\D)$.
\end{itemize}
\end{proposition}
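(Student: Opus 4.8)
The plan is to mirror the proof of Proposition~\ref{pr:step3A}, replacing the upward modifications of $A$ by the downward modifications of $B$ from Proposition~\ref{pr:step1B}. Concretely, since $\D$ is countable we list its elements as a sequence $(\dd_i)_{i\in\NN}$ and define the sequence $(B^{(i)})_{i\in\NN}$ via~\eqref{eq:step3B:DefBis}, with $B^{(0)}=B$. Applying Proposition~\ref{pr:step1B} at each step (with the current pair $(A,B^{(i-1)})$ in the roles of $(A,B)$, and the point $\dd_i$ in the role of $\x$) gives, for every $i\ge 1$, the chain of inequalities $A\le B^{(i)}\le B^{(i-1)}\le\cdots\le B^{(1)}\le B$, together with $L_k^{(A,B^{(i)})}(\dub)\ge 0$ for all $\dub\in\R_k(\D)$ and $\dkD^{(A,B^{(i)})}(\dd_i)=0$.

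Next I would record the monotonicity of $\dkD^{(A,\cdot)}$ in its second argument: decreasing $B$ can only decrease $\PMkD^{(A,\cdot)}$ and $B(\dd)-A(\dd)$, hence $\dkD^{(A,B^{(i)})}(\dd)\le\dkD^{(A,B^{(i-1)})}(\dd)$ for all $\dd\in\D$. Combined with the fact that $\dkD^{(A,B^{(i)})}(\dd_i)=0$, this yields $\dkD^{(A,B^{(i)})}(\dd_j)=0$ for all $i\ge j$. For each $\dd\in\D$ the sequence $\bigl(B^{(i)}(\dd)\bigr)_{i\in\NN}$ is decreasing and bounded below by $A(\dd)$, so the pointwise limit $C(\dd)=\lim_{i\to\infty}B^{(i)}(\dd)$ exists and satisfies $A\le C\le B$, giving~(i).

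For~(iii), I would use that for any fixed $\dub\in\R_k(\D)$ only finitely many $\dd\in\D$ have $m_\dub(\dd)\ne 0$; hence, by~\eqref{eq:LKonD} applied on $\D$, $\LkAC(\dub)=\lim_{i\to\infty}\Lk{A,B^{(i)}}(\dub)\ge 0$ since each term of the sequence is nonnegative. For~(ii), the same finiteness argument shows $\PMkD^{(A,C)}(\dd)=\lim_{i\to\infty}\PMkD^{(A,B^{(i)})}(\dd)$ at each fixed $\dd$ (each defining quotient involves $B^{(i)}$ only through the finitely many vertices of a single $\dub$, and the infimum over $\dub$ passes to the limit), and $B^{(i)}(\dd)-A(\dd)\to C(\dd)-A(\dd)$; taking $\dd=\dd_j$ and using $\dkD^{(A,B^{(i)})}(\dd_j)=0$ for all $i\ge j$ gives $\dkD^{(A,C)}(\dd_j)=0$, which is~(ii) since $j$ was arbitrary.

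The only genuine subtlety, as in Proposition~\ref{pr:step3A}, is the interchange of the infimum defining $\PMkD$ with the pointwise limit; the resolution is exactly the finiteness observation above, so no new idea is needed and the proof is a routine transcription of the ``from below'' argument with $A$, $\gkD$, and raising replaced by $B$, $\dkD$, and lowering. I would therefore present it briefly, citing Propositions~\ref{pr:step1B} and~\ref{pr:step3A} for the structure.
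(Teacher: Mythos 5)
Your proof is correct and follows the paper's intended approach exactly: it is the verbatim dual of the proof of Proposition~\ref{pr:step3A}, lowering $B$ via Proposition~\ref{pr:step1B} at each step of~\eqref{eq:step3B:DefBis} rather than raising $A$, and invoking the same monotonicity, bounded-limit, and finiteness-of-$\{\dd : m_\dub(\dd)\ne 0\}$ arguments for items (i)--(iii). The paper itself only sketches this construction and defers to Proposition~\ref{pr:step3A}, so your fuller justification of the interchange of $\inf$ and limit (valid here since the family $\Lk{A,B^{(i)}}(\dub)$ is monotone decreasing in $i$) is a harmless elaboration of the same route.
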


When showing that $C$ is $k$-increasing, assume first, to the contrary, that there is some $k$-box $\skb$ where the vertices $\x_i\in\ver \skb$ with \emph{negative} multiplicities, i.e., fulfilling $m_\skb(\x_i)=-1$, are in the focus of our argumentation, in particular those fulfilling in addition $A(\x_i)< C(\x_i)$ with $i\in[s]$ and some 
$s\in[2^{k-1}]$.

\begin{lemma}\label{le:step4B:existenceRi}
Let $\D$ be a dense countably infinite mesh in $\II^n$ and fix some integer $k\in[n]$.
Let $A,C \colon \D \to \II$ be functions with $A\le C $, $L_k^{(A,C)}(\dub)\ge 0$ for all $\dub\in \R_k(\D)$, and $\dkD^{(A,C)}(\dd)=0$ for all $\dd\in\D$.
Furthermore, assume that $A(\vv) = C(\vv)$ for all vertices $\vv$ of the unit cube~$\II^n$ and that there exists a $k$-box $\skb\in\R_k(\D)$ with
$\Lk{C,C}(\skb)=v< 0$.
Then there exists $s\in[2^{k-1}]$
such that for each $i\in[s]$
there exist a vertex $\x_i\in\ver \skb$ and a finite disjoint union of $k$-boxes $\dub_i\in\R_k(\D)$ with 
\begin{equation*}
A(\x_i) < C(\x_i), \qquad m_{\skb}(\x_i)=-1, \qquad m_{\dub_i}(\x_i)>0\qquad \text{and}\qquad \frac{\LkAC(\dub_i)}{|m_{\dub_i}(\x_i)|}<\frac{|v|}{s}.
\end{equation*}
For all other $\x_i \in \ver \skb$ with $m_{\skb}(\x_i)=-1$ the equality $A(\x_i) = C(\x_i)$ holds. 
\end{lemma}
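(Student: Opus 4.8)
The plan is to prove Lemma~\ref{le:step4B:existenceRi} by following the blueprint of Lemma~\ref{le:step4A:existenceRi}, with the roles of the lower and upper bounds interchanged: now $A$ plays the part that $C$ played there (the continuous-from-below bound pushed up), and $C$ plays the part that $B$ played there (the fixed bound from above); the condition $\gkD^{(C,B)}=0$ is replaced by $\dkD^{(A,C)}=0$, and vertices with positive multiplicity in $\skb$ are replaced by vertices with negative multiplicity. First I would fix a $k$-box $\skb\in\R_k(\D)$ with $\Lk{C,C}(\skb)=v<0$ and list its $2^{k-1}$ vertices of negative multiplicity as $\x_1,\dots,\x_{2^{k-1}}$, so $\x_i\in\ver\skb$ and $m_\skb(\x_i)=-1$.

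Next I would argue by contradiction that not all of these vertices can satisfy $A(\x_i)=C(\x_i)$. Indeed, $\dkD^{(A,C)}(\dd)=\min\{\PMkD^{(A,C)}(\dd),C(\dd)-A(\dd)\}=0$ holds at every $\dd\in\D$; if $A(\x_i)=C(\x_i)$ for every vertex of $\skb$ with negative multiplicity, then since the hypothesis already gives $A(\vv)=C(\vv)$ on all vertices of the unit cube, and since the vertices of $\skb$ with \emph{positive} multiplicity enter $\Lk{A,C}$ via the $B$-slot (here the $C$-slot), the equality on the negative-multiplicity vertices would force $\Lk{A,C}(\skb)=\Lk{C,C}(\skb)$, contradicting $0\le\Lk{A,C}(\skb)$ (Proposition~\ref{pr:step3B}~(iii)) and $\Lk{C,C}(\skb)=v<0$. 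One must be a little careful here: unlike in Lemma~\ref{le:step4A:existenceRi}, the relevant slot in $\Lk{A,C}$ for a vertex $\y$ with $m_\skb(\y)=-1$ is the lower one, $A(\y)$, so $A(\x_i)=C(\x_i)$ on these vertices is precisely what is needed to turn $\Lk{A,C}(\skb)$ into $\Lk{C,C}(\skb)$; the vertices of positive multiplicity contribute $C(\y)$ in both expressions automatically. After a possible relabelling we obtain $s\in[2^{k-1}]$ with $\dkD^{(A,C)}(\x_i)=\PMkD^{(A,C)}(\x_i)=0<C(\x_i)-A(\x_i)$ for $i\in[s]$ and $C(\x_i)-A(\x_i)=0$ for $i\in[2^{k-1}]\setminus[s]$.

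Then, for $i\in[s]$, since $A(\x_i)<C(\x_i)$ the point $\x_i$ is not a vertex of $\II^n$, so Lemma~\ref{le:KBoxesWithMRFromZ2} (in its $\R_k(\D)$ form) furnishes some $\dub\in\R_k(\D)$ with $m_\dub(\x_i)>0$; hence the infimum defining $\PMkD^{(A,C)}(\x_i)=\inf_{\dub\in\R_k(\D),\,m_\dub(\x_i)>0}\frac{\LkAC(\dub)}{|m_\dub(\x_i)|}$ is over a nonempty set and equals $0<\frac{|v|}{s}$. Therefore there exists $\dub_i\in\R_k(\D)$ with $m_{\dub_i}(\x_i)>0$ and $\frac{\LkAC(\dub_i)}{|m_{\dub_i}(\x_i)|}<\frac{|v|}{s}$, while $m_\skb(\x_i)=-1$ is automatic. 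This yields all four required conditions; the final sentence, that $A(\x_i)=C(\x_i)$ for the remaining negative-multiplicity vertices, is exactly the second case of the relabelling.

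The proof is essentially a mirror of Lemma~\ref{le:step4A:existenceRi}, so the only genuine obstacle is bookkeeping: making sure that the sign/slot conventions in the definition of $\Lk{\cdot,\cdot}$ (Definition~\ref{prel:def:Lfunction}) come out correctly when one switches from raising $A$ against a fixed $B$ to lowering $B$ against a fixed $A$ — in particular that it is the negative-multiplicity vertices whose $A$-values feed $\Lk{A,C}$, so that the contradiction in the middle step goes through, and that one invokes $\PMkD$ (not $\POkD$) with the correct inequality direction $m_{\dub_i}(\x_i)>0$. No new idea beyond Lemma~\ref{le:step4A:existenceRi} is needed.
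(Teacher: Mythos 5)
Your argument is correct and is precisely the mirror-image proof the paper has in mind when it writes that Lemma~\ref{le:step4B:existenceRi} ``follows in analogy to the arguments for Lemma~\ref{le:step4A:existenceRi} and is thus omitted'': the contradiction step, the relabelling to define $s$, the observation that $A(\x_i)<C(\x_i)$ together with $A=C$ on vertices of $\II^n$ places $\x_i$ strictly inside, and the invocation of Lemma~\ref{le:KBoxesWithMRFromZ2} to make the infimum defining $\PMkD^{(A,C)}(\x_i)$ nonvacuous all match the paper's intended argument with the correct slot and sign translations ($\gkD\mapsto\dkD$, $\POkD\mapsto\PMkD$, $m_\skb(\x_i)=1\mapsto m_\skb(\x_i)=-1$, $m_{\dub_i}(\x_i)<0\mapsto m_{\dub_i}(\x_i)>0$).
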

The proof of Lemma~\ref{le:step4B:existenceRi} follows in analogy to the arguments for Lemma~\ref{le:step4A:existenceRi} and is thus omitted at this place. 

For any such vertex $\x_i$ with $i\in[s]$, $m_\skb(\x_i)=-1$ and $m_{\dub_i}(\x_i)>0$ the following notations and additional disjoint unions of $k$-boxes can be defined by putting, for $i\in[s]$,
\begin{align}
    m & =|m_{\dub_1}(\x_1)|\cdot\ldots\cdot|m_{\dub_s}(\x_s)| \qquad \text{and} \qquad
    m_i =\frac{m}{|m_{\dub_i}(\x_i)|},\notag\\[1ex]
    \widecheck{\dub} & =\left(\bigsqcup_{t=1}^m \skb\right)\sqcup\left(\bigsqcup_{t=1}^{m_1} \dub_1\right)\sqcup\dots \sqcup \left(\bigsqcup_{t=1}^{m_s} \dub_s\right)=\left(\bigsqcup_{t=1}^m \skb\right)\sqcup\left(\bigsqcup_{l=1}^{s} \left(\bigsqcup_{t=1}^{m_l} \dub_l\right)\right)\label{eq:step4B:DefR0},\\
    \widecheck{\dub}_i & =\left(\bigsqcup_{t=1}^m \skb\right)\sqcup\left(\bigsqcup_{\substack{l=1\\ l\neq i}}^{s} \left(\bigsqcup_{t=1}^{m_l} \dub_l\right)\right).\label{eq:step4B:DefRi}
\end{align}
In analogy to Proposition~\ref{pr:step4A:upperboundsLkRhats},
we are now interested in finding upper bounds for $\LkAC(\widecheck{\dub})$ and $\LkAC(\widecheck{\dub}_i)$.

\begin{proposition}\label{pr:step4B:upperboundsLkRhats}
Let $\D$ be a dense countably infinite mesh in $\II^n$ and fix some integer $k\in[n]$.
Let $A,C \colon \D \to \II$ be functions with $A \le C$, $L_k^{(A,C)}(\dub)\ge 0$ for all $\dub\in \R_k(\D)$, and $\dkD^{(A,C)}(\dd)=0$ for all $\dd\in\D$.
Furthermore, assume that $A(\vv) = C(\vv)$ for all vertices $\vv$ of the unit cube $\II^n$ and that there exists a $k$-box $\skb\in\R_k(\D)$ with
$\Lk{C,C}(\skb)=v<0$ such that there are vertices $\x_i$ with $m_{\skb}(\x_i)=-1$ and $A(\x_i)<C(\x_i)$ for all $i\in[s]$
and some $s\in[2^{k-1}]$,
while for all other vertices $\x_i \in \ver \skb $ with $m_{\skb}(\x_i)=-1$ we have $A(\x_i) = C(\x_i)$.
\begin{itemize}
\item[\textup{(i)}] For each $i\in[s]$ and each $\widecheck{\dub}_i$ as defined by~\eqref{eq:step4B:DefRi}, the following holds
\begin{equation}\label{eq:step4B:upperboundsLkRhats:Ri}
    \LkAC(\widecheck{\dub}_i)\le m\cdot \big(C(\x_i)-A(\x_i)\big)+m\cdot \Lk{C,C}(\skb)
        +\sum_{l\in[s]\setminus\{i\}} m_l\cdot \LkAC(\dub_l).
\end{equation}
\item[\textup{(ii)}] For $\widecheck{\dub}$ as defined by~\eqref{eq:step4B:DefR0} the following holds
\begin{equation}\label{eq:step4B:upperboundsLkRhats:R0}
    \LkAC(\widecheck{\dub})\le m\cdot\Lk{C,C}(\skb)+\sum_{l=1}^s m_l\cdot \LkAC(\dub_l).
\end{equation}
\end{itemize}
\end{proposition}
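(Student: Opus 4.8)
The plan is to carry over the proof of Proposition~\ref{pr:step4A:upperboundsLkRhats} almost verbatim, after \emph{dualizing} the construction. Under the present hypotheses the lower bound $A$ takes over the role that the upper bound $B$ played there; the nonnegative quantity $C(\x_i)-A(\x_i)$ replaces $B(\x_i)-C(\x_i)$; the vertices $\x_i$ in focus now satisfy $m_\skb(\x_i)=-1$ instead of $+1$; the associated disjoint unions satisfy $m_{\dub_i}(\x_i)>0$ instead of $<0$ (so that $m_i\cdot m_{\dub_i}(\x_i)=m$ with a plus sign); and every inequality relating $A$, $C$ and the multiplicities flips accordingly. As in the from-below case, part~(i) will not use the hypothesis $A(\vv)=C(\vv)$ on $\ver\II^n$; that assumption enters only in part~(ii) through Proposition~\ref{pr:step2}.

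For part~(i), I would first record, in analogy with \eqref{eq:mRhati} and \eqref{eq:step4A:mRi(xi)}, that $m_{\widecheck{\dub}}(\dd)=m\cdot m_\skb(\dd)+\sum_{l=1}^{s} m_l\cdot m_{\dub_l}(\dd)$ and $m_{\widecheck{\dub}_i}(\dd)=m\cdot m_\skb(\dd)+\sum_{l\in[s]\setminus\{i\}} m_l\cdot m_{\dub_l}(\dd)$, and in particular, using $m_i\cdot m_{\dub_i}(\x_i)=m$, that $m_{\widecheck{\dub}}(\x_i)=\sum_{l\neq i}m_l\cdot m_{\dub_l}(\x_i)$ and $m_{\widecheck{\dub}_i}(\x_i)=-m+m_{\widecheck{\dub}}(\x_i)$. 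Then, invoking \eqref{eq:LKonD} on $\D$, I would rewrite \eqref{eq:step4B:upperboundsLkRhats:Ri} as the termwise inequality $\sum_{\dd\in\D}\max\{m_{\widecheck{\dub}_i}(\dd)A(\dd),m_{\widecheck{\dub}_i}(\dd)C(\dd)\}\le m(C(\x_i)-A(\x_i))+m\sum_{\dd\in\D}m_\skb(\dd)C(\dd)+\sum_{l\neq i}m_l\sum_{\dd\in\D}\max\{m_{\dub_l}(\dd)A(\dd),m_{\dub_l}(\dd)C(\dd)\}$, and verify it point by point, splitting into the three cases $\dd=\x_i$, $\dd=\x_j$ with $j\in[s]\setminus\{i\}$, and $\dd\in\D\setminus\{\x_1,\dots,\x_s\}$, exactly as in the proof of Proposition~\ref{pr:step4A:upperboundsLkRhats}. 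The only facts used are $A\le C$ (so that $m\cdot m_\skb(\dd)A(\dd)\le m\cdot m_\skb(\dd)C(\dd)$ whenever $m_\skb(\dd)\in\{0,1\}$), the equality $A(\dd)=C(\dd)$ at the negative-multiplicity vertices of $\skb$ other than $\x_1,\dots,\x_s$, subadditivity of $\max$ along the multiplicity decomposition, and in each case a further split according to the sign of $m_{\widecheck{\dub}_i}(\dd)$.

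For part~(ii), I would mirror Case~1 of the $\widehat{\dub}$-analysis in Proposition~\ref{pr:step4A:upperboundsLkRhats}: suppose, for contradiction, that $m_{\widecheck{\dub}}(\x_j)<0$ for some $j\in[s]$; then $m_{\widecheck{\dub}_j}(\x_j)=-m+m_{\widecheck{\dub}}(\x_j)<0$, so $\POkD^{(A,C)}(\x_j)\le \LkAC(\widecheck{\dub}_j)/|m_{\widecheck{\dub}_j}(\x_j)|$. Since $\dkD^{(A,C)}(\x_j)=0$ while $A(\x_j)<C(\x_j)$, we get $\PMkD^{(A,C)}(\x_j)=0$, and Proposition~\ref{pr:step2} (here the vertex-agreement hypothesis is used) yields $\POkD^{(A,C)}(\x_j)\ge C(\x_j)-A(\x_j)$, hence $|m_{\widecheck{\dub}_j}(\x_j)|(C(\x_j)-A(\x_j))\le\LkAC(\widecheck{\dub}_j)$. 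Feeding the bound \eqref{eq:step4B:upperboundsLkRhats:Ri} together with $\Lk{C,C}(\skb)=v$ and $\LkAC(\dub_l)/|m_{\dub_l}(\x_l)|<|v|/s$ into this chain, and using $|m_{\widecheck{\dub}_j}(\x_j)|=m-m_{\widecheck{\dub}}(\x_j)$, collapses to $m_{\widecheck{\dub}}(\x_j)(C(\x_j)-A(\x_j))\ge m|v|/s>0$, contradicting $m_{\widecheck{\dub}}(\x_j)<0$. Thus $m_{\widecheck{\dub}}(\x_j)\ge 0$ for all $j\in[s]$, and with this sign information the termwise verification of \eqref{eq:step4B:upperboundsLkRhats:R0} (the analogue of Case~1 at the $\x_j$'s, and Case~2 at all other points) goes through exactly as before.

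I expect the main obstacle to be purely bookkeeping: correctly tracking the many sign reversals introduced by passing from positive to negative multiplicities of $\skb$, and from $B\ge C$ to $A\le C$, so that the three case distinctions — and especially the subcases on the sign of $m_{\widecheck{\dub}_i}(\dd)$ and of $m_{\widecheck{\dub}}(\dd)$ — land on the correct side of each inequality. There is no genuinely new idea beyond what is already contained in Proposition~\ref{pr:step4A:upperboundsLkRhats}, Proposition~\ref{pr:step2}, and Lemma~\ref{le:step4B:existenceRi}: the argument is a true dualization, and in the paper it would be natural to state it as being ``in complete analogy to the proof of Proposition~\ref{pr:step4A:upperboundsLkRhats}'' and to supply only the dictionary of replacements together with the few places where the dualized signs must be checked.
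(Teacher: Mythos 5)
Your proposal is correct and follows essentially the same route as the paper, which itself presents this proposition as being ``in analogy to the proof of Proposition~\ref{pr:step4A:upperboundsLkRhats}'' and spells out only the one genuinely new step (the contradiction argument at $\x_j$ in part (ii), combining $\PMkD^{(A,C)}(\x_j)=0$, Proposition~\ref{pr:step2}, and the bound from part (i) to force $m_{\widecheck{\dub}}(\x_j)\ge 0$). Your dictionary of sign replacements, the relation $m_{\widecheck{\dub}_j}(\x_j)=-m+m_{\widecheck{\dub}}(\x_j)$, and the observation that the $\ver\II^n$ hypothesis is used only in part (ii) via Proposition~\ref{pr:step2} all match the paper exactly.
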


\begin{proof}
  When showing the validity of~\eqref{eq:step4B:upperboundsLkRhats:Ri},  the contribution of each $\dd\in\D$ to both sides of the inequality can be considered in analogy to the scenario when constructing $C$ from below, i.e., starting from $A$ (compare also the proof of Proposition~\ref{pr:step4A:upperboundsLkRhats}). The cases to be checked are (1) $\dd=\x_i$, (2) $\dd=\x_j$ with $j\in[s]\setminus\{i\}$ and (3) $\dd\in\D\setminus\{\x_j\mid j\in[s]\}$, i.e., distinguishing whether or not $\dd$ is one of the vertices of $\skb$ with negative multiplicity and different values with respect to $A$ and $C$ or not.
  
  In order to show that~\eqref{eq:step4B:upperboundsLkRhats:R0} holds,  only the contribution of $\dd\in\{\x_1, \dots, \x_s\}$, i.e., for elements $\x_j\in\D$ with $m_{\skb}(\x_j)=-1$ and $A(\x_j)< C(\x_j)$ for all $j\in[s]$
  to both sides of the corresponding inequality needs slightly different arguments compared with the situation when constructing $C$ from $A$ (see also the proof of Proposition~\ref{pr:step4A:upperboundsLkRhats}). We briefly discuss these differences:
  
  \emph{Case} $1$: $\dd\in\{\x_1,\dots, \x_s\}$, i.e., $m_{\skb}(x_j)=-1$ and $A(\x_j)<C(\x_j)$ for all $j\in[s]$.
  If $m_{\widecheck{\dub}}(\x_j)<0$ then also $m_{\widecheck{\dub}_j}(\x_j)=-m+m_{\widecheck{\dub}}(\x_j)<0$ and, as a consequence, we get
\begin{equation*}\POkD^{(A,C)}(\x_j)=\inf_{\substack{\dub\in\R_k(\D)\\ m_\dub(\x_j)<0}}\frac{\Lk{A,C}(\dub)}{|m_\dub(\x_j)|}\le \frac{\Lk{A,C}(\widecheck{\dub}_j)}{|m_{\widecheck{\dub}_j}(\x_j)|}.
\end{equation*}
Since $\PMkDAC(\x_j)=0$, Proposition~\ref{pr:step2} implies
\begin{equation*}
    |m_{\widecheck{\dub}_j}(\x_j)|\cdot(C(\x_j)-A(\x_j))\le |m_{\widecheck{\dub}_j}(\x_j)|\cdot(\POkD^{(A,C)}(\x_j)+\PMkD^{(A,C)}(\x_j))\le \LkAC(\widecheck{\dub}_j).
\end{equation*}
Moreover, by~\eqref{eq:step4B:upperboundsLkRhats:Ri} and in analogy to the proof of Proposition~\ref{pr:step4A:upperboundsLkRhats} we may argue that
\begin{align*}
    |m_{\widecheck{\dub}_j}(\x_j)|\cdot(C(\x_j)-A(\x_j))
        &\le m\cdot (C(\x_j)-A(\x_j))+m\cdot \Lk{C,C}(\skb) \\
        &\quad{} +\sum_{l\in[s]\setminus\{j\}}m_l\cdot \LkAC(\dub_l)\\
        &<m\cdot (C(\x_j)-A(\x_j))+m\cdot v+\sum_{l\in[s]\setminus\{j\}} m_l\cdot |m_{\dub_l}(\x_l)|\cdot \frac{|v|}{s} \\
        &=m\cdot (C(\x_j)-A(\x_j))- \frac{m\cdot |v|}{s}.
\end{align*}
Since $m_{\widecheck{\dub}}(\x_j)=m_{\widecheck{\dub}_j}(\x_j)+m$, we obtain the contradiction
\begin{equation*}
    0\le (-1)\cdot \underbrace{m_{\widecheck{\dub}}(\x_j)}_{<0}\cdot (C(\x_j)-A(\x_j))
=\underbrace{(-m_{\widecheck{\dub}_j}(\x_j)-m)}_{=|m_{\widecheck{\dub}_j}(\x_j)|-m}\cdot (C(\x_j)-A(\x_j))
    < -\frac{m\cdot |v|}{s}<0,
\end{equation*}
showing that $m_{\widecheck{\dub}}(\x_j)\ge 0$ for all 
$l\in[s]$.
Therefore, for the contribution of any $\x_j$ with $j\in[s]$ 
to the left-hand side of~\eqref{eq:step4B:upperboundsLkRhats:R0} we obtain 
\begin{align*}
    m_{\widecheck{\dub}}(\x_j)\cdot C(\x_j)&=\left(m\cdot m_{\skb}(\x_j)+\sum_{l=1}^s m_l\cdot m_{\dub_l}(\x_j)\right)\cdot C(\x_j)\\
        &\le m\cdot m_{\skb}(\x_j)\cdot C(\x_j)+\sum_{l=1}^s m_l \cdot\max\{m_{\dub_l}(\x_j)\cdot A(\x_j),m_{\dub_l}(\x_j)\cdot C(\x_j)\}.
\end{align*}

\emph{Case} $2$:  $\dd\in\D\setminus\{\x_1,\dots, \x_s\}$ with $m_{\skb}(\dd)=-1$ and fulfilling $A(\dd)=C(\dd)$. This case can  be handled in analogy to the situation when constructing $C$ from $A$ (see Proposition~\ref{pr:step4B:upperboundsLkRhats}). 
\end{proof}

Based on these results it can be shown that the function $C$ obtained as pointwise limit of the sequence $(B^{(i)})_{i \in\NN}$ does not only have all the properties mentioned in Proposition~\ref{pr:step3B}, but is also $k$-increasing on $\D$.

\begin{proposition}\label{pr:step4B}
Let $\D$ be a dense countably infinite mesh in $\II^n$ and fix some integer $k\in[n]$.  
Let $A, B \colon \D \to \II$ be functions with $A \le B$ and $L_k^{(A,B)}(\dub)\ge 0$ for all $\dub\in \R_k(\D)$. Furthermore, assume that $A(\vv) = B(\vv)$ for all vertices $\vv$ of the unit cube $\II^n$.
Let $C\colon\D\to\II$ be the pointwise limit of the sequence $(B^{(i)})_{i\in\NN}$ as given by~\eqref{eq:step3B:DefBis}.
Then $C$ is $k$-increasing on $\D$, i.e., $\Lk{C,C}(\dub)\ge 0 \text{ for all } \dub\in\R_k(\D)$.
\end{proposition}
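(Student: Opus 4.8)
The plan is to mirror the proof of Proposition~\ref{pr:step4A}, interchanging the roles of the two bounds and reversing the sign conventions on multiplicities. First I would record what the construction~\eqref{eq:step3B:DefBis} provides: by Proposition~\ref{pr:step3B}, the pointwise limit $C=\lim_{i\to\infty}B^{(i)}$ satisfies $A\le C\le B$ on $\D$, $\dkD^{(A,C)}(\dd)=\min\{\PMkD^{(A,C)}(\dd),C(\dd)-A(\dd)\}=0$ for every $\dd\in\D$, and $\LkAC(\dub)\ge 0$ for all $\dub\in\R_k(\D)$. Moreover, since $A\le C\le B$ and $A(\vv)=B(\vv)$ at every vertex $\vv$ of $\II^n$, we also have $A(\vv)=C(\vv)$ at these points. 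Thus the pairs $(A,C)$ and $(A,C)$ meet exactly the hypotheses needed to invoke Lemma~\ref{le:step4B:existenceRi} and Proposition~\ref{pr:step4B:upperboundsLkRhats}.

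Next I would argue by contradiction. Assume $C$ is not $k$-increasing. Since the multiplicity of a point is additive over disjoint unions of $k$-boxes, it suffices to produce a single $k$-box $\skb\in\R_k(\D)$ with $\Lk{C,C}(\skb)=v<0$. Lemma~\ref{le:step4B:existenceRi} then yields an integer $s\in[2^{k-1}]$ and, for each $i\in[s]$, a vertex $\x_i\in\ver\skb$ with $m_\skb(\x_i)=-1$ and $A(\x_i)<C(\x_i)$, together with a finite disjoint union $\dub_i\in\R_k(\D)$ satisfying $m_{\dub_i}(\x_i)>0$ and $\LkAC(\dub_i)/|m_{\dub_i}(\x_i)|<|v|/s$, while $A(\x_i)=C(\x_i)$ for all remaining vertices of $\skb$ with negative multiplicity. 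From these data I assemble the disjoint union $\widecheck{\dub}\in\R_k(\D)$ according to~\eqref{eq:step4B:DefR0}, with weights $m=|m_{\dub_1}(\x_1)|\cdot\ldots\cdot|m_{\dub_s}(\x_s)|$ and $m_i=m/|m_{\dub_i}(\x_i)|$.

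The decisive inequality is Proposition~\ref{pr:step4B:upperboundsLkRhats}(ii), namely $\LkAC(\widecheck{\dub})\le m\cdot\Lk{C,C}(\skb)+\sum_{l=1}^s m_l\cdot\LkAC(\dub_l)$. Combining this with $\LkAC(\widecheck{\dub})\ge 0$ (Proposition~\ref{pr:step3B}(iii)), with $\Lk{C,C}(\skb)=v$, and with the bound $m_l\cdot\LkAC(\dub_l)<m_l\cdot|m_{\dub_l}(\x_l)|\cdot\tfrac{|v|}{s}=\tfrac{m|v|}{s}$ valid for each $l\in[s]$, I obtain after summing over $l$ the contradiction
\begin{equation*}
0\le\LkAC(\widecheck{\dub})<m\cdot v+s\cdot\tfrac{m|v|}{s}=m\,(v+|v|)=0 .
\end{equation*}
Hence no such $\skb$ exists, and $C$ is $k$-increasing on $\D$. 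I do not expect any genuine obstacle here: the technical core of the argument has already been isolated in Lemma~\ref{le:step4B:existenceRi} and Proposition~\ref{pr:step4B:upperboundsLkRhats}, so the present statement reduces to bookkeeping that runs exactly parallel to Proposition~\ref{pr:step4A}.
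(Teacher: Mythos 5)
Your proof is correct and follows exactly the route the paper intends: the paper's own proof of this proposition is given only as a pointer to the argument for Proposition~\ref{pr:step4A}, with Lemma~\ref{le:step4B:existenceRi}, Proposition~\ref{pr:step3B}, and Proposition~\ref{pr:step4B:upperboundsLkRhats} substituted for their ``from below'' counterparts, and you have spelled out that analogy accurately, including the observation that $A(\vv)=C(\vv)$ at vertices of $\II^n$ and the reduction (via additivity of multiplicities) to a single violating $k$-box $\skb$.
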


The proof of Proposition~\ref{pr:step4B} can be carried out in analogy to the proof of Proposition~\ref{pr:step4A}, using some of the results of Lemma~\ref{le:step4B:existenceRi} and Propositions~\ref{pr:step3B} and~\ref{pr:step4B:upperboundsLkRhats}. 

\section{From a dense mesh to the unit cube} \label{se:extension}

Given functions $A\leq B$ defined on $\II^n$, in both Propositions~\ref{pr:step3A} and~\ref{pr:step3B} a function $C$, defined on $\D$, is constructed satisfying 
$A(\x)\leq C(\x)\leq B(\x)$
for all $\x\in\D$. Propositions~\ref{pr:step4A} and~\ref{pr:step4B} show that  the function $C$ obtained in this way is $k$-increasing on $\D$ for some $k\in[n]$. 
Thus we can extend $C$ to the entire unit cube $\II^n$ and show that this extension is still $k$-increasing and lies between $A$ and $B$ (on the whole~$\II^n$).

\begin{proposition} \label{pr:step5:generalfunctions}
    Let $A,B \colon \II^n \to \II$ be standardized functions with $A \leq B$ and fix some integer $k\in[n]$. Suppose that at least one of the functions $A$ and $B$ satisfies Condition~{\bf S} with set $S$.
    Furthermore, let $\D \subseteq \II^n$ be a dense countably infinite mesh with $S^n \subseteq \D$ and $C \colon \D \to \II$ a $k$-increasing function such that $A(\dd) \leq C(\dd) \leq B(\dd)$ for all $\dd \in \D$.
    Then $C$ can be extended to a $k$-increasing function $\widehat{C} \colon \II^n \to \II$ such that $A(\x) \leq \widehat{C}(\x) \leq B(\x)$ for all $\x \in \II^n$.
\end{proposition}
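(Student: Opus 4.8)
The plan is to extend $C$ to $\II^n$ by a one-sided limit along the mesh $\D$, exploiting that $C$ is automatically $1$-increasing and that Condition~{\bf S} forces the relevant bound to be attained in the limit. First I would reduce to the case that $B$ satisfies Condition~{\bf S} for $S$; the case where $A$ does is symmetric, with limits ``from above'' replaced by limits ``from below'' and the roles of $B$ and $A$ exchanged. Since $S^n\subseteq\D$ we have $S\subseteq\delta_i$ for every $i$, hence $B$ is continuous in its $i$-th variable at every point whose $i$-th coordinate lies outside $\delta_i$. I would then record two facts about $C$ on $\D$: \textup{(1)} $C$ is grounded, since on each face $\{x_i=0\}$ one has $0=A\le C\le B=0$; and \textup{(2)} $C$ is $1$-increasing on $\D$. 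For \textup{(2)}, given $\dd,\dd'\in\D$ agreeing off the $i$-th coordinate with $d_i<d'_i$, the inequality $C(\dd)\le C(\dd')$ is immediate if some coordinate of $\dd$ vanishes (by \textup{(1)} and $C\ge A\ge 0$), and otherwise one forms a $k$-box $R\in\R_k(\D)$ whose $i$-th coordinate runs from $d_i$ to $d'_i$, in which some $k-1$ further coordinates (possible since $k\le n$) run from $0$ to their values in $\dd$, and whose remaining $n-k$ coordinates are frozen at their values in $\dd$; by \textup{(1)} every vertex of $R$ having a zero among those $k-1$ coordinates contributes $0$, leaving only $\dd'$ (sign $+1$) and $\dd$ (sign $-1$), so $0\le V_{C,k}(R)=C(\dd')-C(\dd)$.

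\emph{The extension.} For $\x=(x_1,\dots,x_n)\in\II^n$ put $J(\x)=\{\,i\in[n]\mid x_i\notin\delta_i\,\}$ and define
\[
\widehat C(\x)=\inf\bigl\{\,C(\dd)\mid \dd=(d_1,\dots,d_n)\in\D,\ \dd\ge\x,\ d_i=x_i\ \text{for all}\ i\notin J(\x)\,\bigr\}.
\]
The infimum is over a nonempty set (take $d_i=x_i$ for $i\notin J(\x)$ and $d_i\in\delta_i\cap[x_i,1]$ for $i\in J(\x)$, which exists since $\delta_i$ is dense and $1\in\delta_i$), and by \textup{(2)} the quantity $C(\dd)$ decreases as $\dd$ decreases, so in fact $\widehat C(\x)=\lim_{m\to\infty}C(\dd^{(m)})$ for every sequence $(\dd^{(m)})$ in $\D$ with $d^{(m)}_i=x_i$ for $i\notin J(\x)$ and $d^{(m)}_i\downarrow x_i$, $d^{(m)}_i\in\delta_i$, for $i\in J(\x)$. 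Consequently: $\widehat C$ extends $C$, because $\x\in\D$ forces $J(\x)=\emptyset$ and hence $\dd=\x$; $\widehat C(\x)\ge A(\x)$, because every competitor satisfies $\dd\ge\x$, so $C(\dd)\ge A(\dd)\ge A(\x)$; and $\widehat C(\x)\le B(\x)$, because $\widehat C(\x)\le\inf\{B(\dd)\mid\dd\ \text{as above}\}$ and lowering the coordinates indexed by $J(\x)$ to $x_i$ one at a time, using continuity of $B$ in each such variable at the point in question — this is the only place Condition~{\bf S} is needed, those coordinates lying outside $S$ — shows this last infimum equals $B(\x)$. In particular $\widehat C$ maps into $\II$.

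\emph{$k$-increasingness.} It suffices to show $V_{\widehat C,k}(R)\ge0$ for a single $k$-box $R=\cint{\x,\y}\subseteq\II^n$, since every $\dub\in\R_k(\II^n)$ is a disjoint union of $k$-boxes. Let $V$ be the set of varying coordinates of $R$, $|V|=k$. For large $m$ I would build a $k$-box $R_m\in\R_k(\D)$ by replacing, for $i\in V$, the endpoints $x_i,y_i$ by $x_i^{(m)},y_i^{(m)}$ that equal $x_i,y_i$ when these already lie in $\delta_i$ and are otherwise points of $\delta_i$ decreasing to them, arranged so that $x_i^{(m)}<y_i^{(m)}$ (possible once $m$ is large), and by replacing, for $i\notin V$, the common value $x_i=y_i$ by a point of $\delta_i$ decreasing to it (or leaving it if it already lies in $\delta_i$). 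Then $R_m$ is a genuine $k$-box with vertices in $\D$ and the same sign pattern as $R$; each vertex of $R_m$ decreases coordinatewise to the corresponding vertex of $R$ exactly along the pattern used to define $\widehat C$, so by the limit description above $C$ at that vertex of $R_m$ converges to $\widehat C$ at the corresponding vertex of $R$. Hence $0\le V_{C,k}(R_m)\to V_{\widehat C,k}(R)$, giving $V_{\widehat C,k}(R)\ge0$.

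\emph{The main obstacle} is fact~\textup{(2)}: without knowing that $C$ is $1$-increasing on $\D$ the one-sided limits defining $\widehat C$ need not exist, and the sandwich $A\le C\le B$ alone does not force convergence because the one-sided limits of $A$ and of $B$ need not agree. Once \textup{(2)} is in hand the remaining work is a careful but routine limiting argument, the only points deserving attention being that the approximating boxes $R_m$ must remain honest $k$-boxes (strict inequalities in all $k$ varying coordinates) and that the vertex approximation must be matched to the one-sided definition of $\widehat C$; Condition~{\bf S} enters only to guarantee $\widehat C\le B$ (respectively $\widehat C\ge A$ in the symmetric case).
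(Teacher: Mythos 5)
Your proof is correct and takes essentially the same route as the paper's. Reducing to the case where $B$ satisfies Condition~\textbf{S}, your extension $\widehat{C}(\x)=\inf\{C(\dd)\mid \dd\ge\x,\ \dd\in\D,\ d_i=x_i \text{ for }i\notin J(\x)\}$ coincides, by the $1$-increasingness of $C$ on $\D$, with the paper's $\widetilde{C}(\x)=\inf\{C(\dd)\mid\dd\in\cint{\x,\1}\cap\D\}$, and the remaining steps (groundedness, Condition~\textbf{S} to pin down the inequality $\widehat C\le B$, approximating a $k$-box $R$ by mesh $k$-boxes) mirror the paper's argument. Two points where you go beyond the paper's write-up are worth noting: you actually prove, via grounded $k$-boxes with $k-1$ auxiliary coordinates dropped to $0$, that groundedness plus $k$-increasingness on $\D$ implies $1$-increasingness on $\D$ (the paper asserts this in one line); and for $k$-increasingness of the extension you package the approximation as a monotone sequential limit tailored to the one-sided limit description of $\widehat C$, rather than the paper's one-shot $\varepsilon$-argument picking $\dd_j$'s and then forming a dominating box $\widehat{R}$ by coordinatewise maxima. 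Both refinements are sound; they clarify rather than change the argument, and you correctly identify $1$-increasingness of $C$ on $\D$ as the linchpin that makes the one-sided limits exist.
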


\begin{proof}
    Suppose first that the function $A$ satisfies Condition~{\bf S},
    let $\D = \delta_1 \times \delta_2 \times \dots \times \delta_n$ and define the function $\widehat{C} \colon \II^n \to \II$ by
    \begin{equation}\label{eq:Chat}
     \widehat{C}(\x)=\sup\{C(\dd) \mid \dd \in \cint{\mathbf{0},\x}\cap \D\}.   
    \end{equation}
    Note that the set on the right-hand side is non-empty because of $\mathbf{0} \in \D$. Since $C$ is grounded and $k$-increasing on $\D$, it is $1$-increasing on $\D$. Hence, for each $\dd \in \D$ we have $\widehat{C}(\dd) = C(\dd)$, and $\widehat{C}$ is an extension of $C$ to $\II^n$.

    We claim that $\widehat{C}$ is $k$-increasing on $\II^n$.
    Let $R=\cint{\x,\y} \subseteq \II^n$ be a $k$-box.
    Denote the vertices of $R$ by $\vv_1,\vv_2,\dots,\vv_r$, where $r=2^k$.
    We may assume that $m_R(\vv_j)=1$ if $j\in[\frac r2]$  and $m_R(\vv_j)=-1$ if 
    $j\in[r]\setminus [\frac r2]$.
    Let $\varepsilon >0$.
    For every $j \in [r]$ there exists $\dd_j \in \cint{\mathbf{0},\vv_j} \cap \D$ such that $\widehat{C}(\vv_j)-\frac{2\varepsilon}{r} < C(\dd_j)$.
    Using these points $\dd_j$ we construct a $k$-box $\widehat{R}$ with vertices in $\D$ which approximates $R$.
    For each $i \in [n]$ let $J_i^1=\{j \in [r] \mid (\vv_j)_i=x_i\}$ and $J_i^2=\{j \in [r] \mid (\vv_j)_i=y_i\}$, so that $J_i^1 \cup J_i^2 =[r]$.
    If $x_i=y_i$, we define $\widehat{x}_i=\max\{(\dd_j)_i \mid j \in [r]\} \in \delta_i$ and $\widehat{y}_i=\widehat{x}_i$,
    so that $\widehat{x}_i=\widehat{y}_i \leq x_i=y_i$.
    If $x_i<y_i$, we define $\widehat{x}_i=\max\{(\dd_j)_i \mid j \in J_i^1\} \in \delta_i$,
    choose $d_i \in \opint{x_i,y_i} \cap \delta_i$,
    and define $\widehat{y}_i=\max\{d_i,\max\{(\dd_j)_i \mid j \in J_i^2\}\} \in \delta_i$,
    so that $\widehat{x}_i\leq x_i<\widehat{y}_i\leq y_i$.
    Finally, we put $\widehat{\x}=(\widehat{x}_1,\widehat{x}_2,\dots,\widehat{x}_n)$, $\widehat{\y}=(\widehat{y}_1,\widehat{y}_2,\dots,\widehat{y}_n)$, and $\widehat{R}=\cint{\widehat{\x},\widehat{\y}}$, the latter being a $k$-box with vertices in $\D$.
    Denote the vertices of $\widehat{R}$ by $\widehat{\dd}_1,\widehat{\dd}_2,\dots,\widehat{\dd}_r$ in such a way that for all $i \in [n]$ and $j \in [r]$ we have $\bigl(\widehat{\dd}_j\bigr)_i=\widehat{x}_i$ if and only if $(\vv_j)_i=x_i$, i.e., if and only if $j \in J_i^1$.
    Then $m_{\widehat{R}}\bigl(\widehat{\dd}_j\bigr)=m_R(\vv_j)$ and $\dd_j \leq \widehat{\dd}_j \leq \vv_j$ for all $j \in [r]$. By \eqref{eq:Chat}, this implies $\widehat{C}(\vv_j) \geq C(\widehat{\dd}_j)$.
    Therefore we have the following inequality,
    \begin{align*}
        V_{\widehat{C}}(R) &=\sum_{j=1}^{r/2} \widehat{C}(\vv_j)-\sum_{j=r/2+1}^r \widehat{C}(\vv_j)\\
        &>
        \sum_{j=1}^{r/2} C\left(\widehat{\dd}_j\right)-\sum_{j=r/2+1}^r \left(C(\dd_j)+\frac{2\varepsilon}{r}\right)
        =\sum_{j=1}^{r/2} C\left(\widehat{\dd}_j\right)-\sum_{j=r/2+1}^r C(\dd_j)-\varepsilon,
    \end{align*}
    and since $C$ is $1$-increasing and $k$-increasing on $\D$ we obtain
    \begin{equation*}
    V_{\widehat{C}}(R) >\sum_{j=1}^{r/2} C\left(\widehat{\dd}_j\right)-\sum_{j=r/2+1}^r C\left(\widehat{\dd}_j\right)-\varepsilon =V_C\left(\widehat{R}\right)-\varepsilon \geq -\varepsilon.
    \end{equation*}
    Sending $\varepsilon$ to $0$, we get $V_{\widehat{C}}(R) \geq 0$ for all $k$-boxes $R \subseteq \II^n$, i.e., $\widehat{C}$ is $k$-increasing on $\II^n$.

    We have $A(\dd) \leq \widehat{C}(\dd) \leq B(\dd)$ for any $\dd \in \D$ by assumption. For any $\x \in \II^n$ and $\dd \in \cint{\mathbf{0},\x} \cap \D$ we have $C(\dd) \leq B(\dd) \leq B(\x)$, so $\widehat{C}(\x) \leq B(\x)$ by \eqref{eq:Chat}. To complete the proof it remains to show that $A(\x) \leq \widehat{C}(\x)$ for each $\x \in \II^n \setminus \D$.

    Let $\x \in \II^n \setminus \D$. Since $\x \notin \D$, at least one coordinate $x_i$ of $\x$ does not belong to $\delta_i$. We may assume without loss of generality that, for some $m \in [n]$, we have $x_j \notin \delta_j$ for $j \in [m]$ and $x_j \in \delta_j$ for $j\in[n]\setminus [m]$.
    For any $\dd \in \cint{\mathbf{0},\x} \cap \D$ let $\widehat{\dd} = (d_1, \dots, d_m, x_{m+1}, \dots, x_n) \in \cint{\mathbf{0},\x} \cap \D$.
    The function $f_1 \colon t_1 \longmapsto A(t_1,d_2, \dots, d_m, x_{m+1}, \dots, x_n)$ is continuous at $t_1 = x_1$ by Condition~{\bf S}, since $x_1 \notin \delta_1$ and $S\subseteq \delta_1$. Since $f_1$ is also increasing and $\delta_1$ is dense in $\II$, it follows that
    \begin{equation*} 
    A(x_1,d_2, \dots, d_m, x_{m+1}, \dots, x_n) = \sup\bigl\{A(t_1,d_2, \dots, d_m, x_{m+1}, \dots, x_n) \bigm| t_1 \in \cint{0, x_1} \cap \delta_1\bigr\}.
    \end{equation*}
    The function $f_2 \colon t_2 \longmapsto A(x_1,t_2,d_3, \dots, d_m, x_{m+1}, \dots, x_n)$ is continuous at $t_2 = x_2$, increasing, and $\delta_2$ is dense in $\II$, so
    \begin{align*}
       A(x_1, x_2, d_3, &\dots, d_m, x_{m+1}, \dots, x_n) \\[1ex]
       &= \sup\bigl\{A(x_1, t_2, d_3, \dots, d_m, x_{m+1}, \dots, x_n) \bigm| t_2 \in \cint{0, x_2} \cap \delta_2\bigr\} \\[1ex]
       &= \sup\bigl\{A(t_1, t_2, d_3, \dots, d_m, x_{m+1}, \dots, x_n) \bigm| t_1 \in \cint{0, x_1} \cap \delta_1, t_2 \in \cint{0, x_2} \cap \delta_2\bigr\} .
    \end{align*}
    Continuing inductively up to index $m$, and at the last step using the increasing function $f_m \colon t_m \longmapsto A(x_1,\dots, x_{m-1}, t_m, x_{m+1}, \dots, x_n)$ which is continuous at $t_m=x_m$, we obtain
    \begin{align*}
       A(\x) &= \sup\bigl\{A(x_1, \dots, x_{m-1}, t_m, x_{m+1}, \dots, x_n) \bigm| t_m \in \cint{0, x_m} \cap \delta_m\bigr\} \\[1ex]
       &= \sup\bigl\{A(t_1, \dots, t_m, x_{m+1}, \dots, x_n) \bigm| t_j \in \cint{0, x_j} \cap \delta_j \text{ for all } j \in [m]\bigr\} \\[1ex]
       &= \sup\Bigl\{A(\widehat{\dd}) \Bigm| \dd \in \cint{\mathbf{0},\x} \cap \D\Bigr\} 
       = \sup\bigl\{A(\dd) \bigm| \dd \in \cint{\mathbf{0},\x} \cap \D\bigr\} \\[1ex]
       &\leq \sup\bigl\{C(\dd) \bigm| \dd \in \cint{\mathbf{0},\x} \cap \D\bigr\} 
       = \widehat{C}(\x), 
    \end{align*}
    completing the proof when $A$ satisfies Condition~{\bf S}.
    Suppose now that the function $B$ satisfies Condition~{\bf S}. In this case we define for every $\x \in \II^n$
    \begin{equation*} 
    \widetilde{C}(\x)=\inf\{C(\dd) \mid \dd \in \cint{\x,\1} \cap \D\}.  
    \end{equation*} 
    The function $\widetilde{C}$ is another extension of $C$ to $\II^n$. Similarly as in the previous case we show that also~$\widetilde{C}$ is $k$-increasing. It is obvious that $A(\x) \leq \widetilde{C}(\x)$ for any $\x \in \II^n$. In order to prove $\widetilde{C}(\x) \leq B(\x)$ we use Condition~{\bf S} for the function $B$ to show that    \begin{equation*} B(\x)=\inf\{B(\dd) \mid \dd \in \cint{\x,\1} \cap \D\} \end{equation*} 
    in a similar way as above. 
\end{proof}

When formulating the counterpart of Proposition~\ref{pr:step5:generalfunctions} for the case of semicopulas, we can drop Condition~\textbf{S}.

\begin{proposition} \label{pr:step5:semicopulas}
Consider two $n$-variate semicopulas $A,B\colon \II^n\to\II$ with $A\le B$ and fix some integer $k\in[n]\setminus\{1\}$. 
Further, let $\D \subseteq \II^n$ be a dense countably infinite mesh and $C \colon \D \to \II$ a $k$-increasing function such that $A(\dd) \leq C(\dd) \leq B(\dd)$ for all $\dd \in \D$.
    Then $C$ can be extended to a $k$-increasing function $C' \colon \II^n \to \II$ such that $A(\x) \leq C'(\x) \leq B(\x)$ for all $\x \in \II^n$.
\end{proposition}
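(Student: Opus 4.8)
The plan is to extend $C$ by the same sup-construction used in the proof of Proposition~\ref{pr:step5:generalfunctions}, and to replace the role of Condition~{\bf S} by the continuity of $k$-increasing semicopulas, which is exactly where the hypothesis $k\ge 2$ enters.

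First I would observe that $C$ is a ``semicopula on $\D$''. Since $A\le C\le B$ and $B$ is a semicopula, $C(\dd)\le B(\dd)\le\min_i d_i$ for $\dd\in\D$, so $C$ is grounded on $\D$; and a grounded $k$-increasing function on a mesh is automatically $1$-increasing on that mesh (embed any $1$-box of $\D$ into a $k$-box whose remaining $k-1$ coordinates run from $0$ up to the corresponding coordinates of the $1$-box, and use groundedness to annihilate all but two of its vertices). Next I would define $\widehat C\colon\II^n\to\II$ by $\widehat C(\x)=\sup\{C(\dd)\mid\dd\in\cint{\mathbf 0,\x}\cap\D\}$, exactly as in~\eqref{eq:Chat}. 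The part of the proof of Proposition~\ref{pr:step5:generalfunctions} showing that $\widehat C$ extends $C$ and is $k$-increasing on $\II^n$ uses only that $C$ is $1$-increasing and $k$-increasing on $\D$, so it carries over verbatim -- here we need neither Condition~{\bf S} nor any special property of $\D$. Using $C\le B\le\min_i x_i$ and the uniform marginals of $A$ and $B$ one then checks directly that $\widehat C$ is grounded, $1$-increasing, and has uniform marginals, hence $\widehat C$ is itself a $k$-increasing semicopula; and $\widehat C\le B$ on $\II^n$ is immediate, since $C(\dd)\le B(\dd)\le B(\x)$ for every $\dd\in\cint{\mathbf 0,\x}\cap\D$.

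The one remaining point is the lower bound $A\le\widehat C$ on all of $\II^n$, and for this I would invoke the lemma that \emph{every $k$-increasing semicopula on $\II^n$ with $k\ge 2$ is continuous} (indeed $1$-Lipschitz in each variable). Granting it, $\widehat C$ is continuous, so for any $\x\in\II^n$ we may pick $\dd_j\in\D$ with $\dd_j\ge\x$ and $\dd_j\to\x$ and obtain $\widehat C(\x)=\lim_j\widehat C(\dd_j)=\lim_j C(\dd_j)\ge\lim_j A(\dd_j)\ge A(\x)$, the last step because $A$ is $1$-increasing. Hence $A\le\widehat C\le B$ on $\II^n$, and $C'=\widehat C$ is the required extension.

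The main work is thus the lemma, which I expect to be the chief obstacle. I would prove it by induction on $n$, the base case $n=k$ being clear because a $k$-increasing semicopula on $\II^k$ is a copula in $k$ variables, hence $1$-Lipschitz in each argument. For $k<n$, the restriction of a $k$-increasing semicopula $S$ to a coordinate face $\{x_i=1\}$ is a $k$-increasing semicopula on $\II^{n-1}$, so by the inductive hypothesis it is $1$-Lipschitz in each remaining variable; to pass from these faces to all of $\II^n$, take $\x,\x'$ differing only in a coordinate $j$, say $x_j<x_j'$ (the cases where some coordinate $\ne j$ of $\x$ equals $0$, or where all coordinates $\ne j$ equal $1$, are trivial, and if some coordinate $i\ne j$ equals $1$ we are already on the face $\{x_i=1\}$). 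Choose a coordinate $i_1\ne j$ with $x_{i_1}<1$ and $k-2$ further coordinates $i_2,\dots,i_{k-1}$, distinct from $i_1$ and $j$, with $x_{i_l}>0$, and evaluate $V_{S,k}$ on the $k$-box varying coordinates $i_1,\dots,i_{k-1},j$ in which coordinate $i_1$ runs from $x_{i_1}$ to $1$, coordinate $j$ from $x_j$ to $x_j'$, and each coordinate $i_2,\dots,i_{k-1}$ from $0$ to its value in $\x$. Groundedness annihilates every vertex except $\x$, $\x'$ and the two points $\y,\y'$ obtained from them by replacing coordinate $i_1$ with $1$; working out the signs, $V_{S,k}\ge 0$ becomes $S(\x')-S(\x)\le S(\y')-S(\y)$, and since $\y,\y'$ lie on the face $\{x_{i_1}=1\}$ and differ only in coordinate $j$, the inductive hypothesis gives $S(\y')-S(\y)\le x_j'-x_j$; together with $S(\x')-S(\x)\ge 0$ (as $S$ is $1$-increasing) this is the Lipschitz bound in coordinate $j$. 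The only delicate bit is the sign bookkeeping for the surviving vertices, and here the assumption $k\ge2$ (so that $k-2\ge0$ ``spectator'' coordinates are available to be killed by groundedness) is essential -- which is also why this proposition has to exclude $k=1$.
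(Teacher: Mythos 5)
Your proposal is correct and takes essentially the same route as the paper: both use the sup-extension $\widehat{C}$ from \eqref{eq:Chat}, note that the $k$-increasingness of $\widehat{C}$ and the bound $\widehat{C}\le B$ carry over from Proposition~\ref{pr:step5:generalfunctions} without Condition~\textbf{S}, and replace Condition~\textbf{S} by the $1$-Lipschitz continuity that comes from groundedness, $k\ge 2$ and uniform marginals. The only organizational difference is where that Lipschitz fact is applied: the paper notes $C$ is $1$-Lipschitz on $\D$, so its unique continuous extension coincides with both $\widehat{C}$ and $\widetilde{C}$, and then reads off $A\le\widetilde{C}$; you instead verify $\widehat{C}$ is itself a $k$-increasing semicopula, invoke Lipschitz continuity of $\widehat{C}$, and obtain $A\le\widehat{C}$ by approximating from above -- a valid minor variant, and your inductive proof of the Lipschitz lemma (which the paper takes as known, and which in the proof of Theorem~\ref{th:coherence-upper-semicopulas} is justified instead by the shorter ``grounded $k$-increasing $\Rightarrow$ $2$-increasing $\Rightarrow$ $1$-Lipschitz'' chain) is also correct.
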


\begin{proof}
The functions $A$ and $B$ are semicopulas and $C$ lies between them, so $C$ is grounded and has uniform marginals on $\D$. 
  Since $C$ is grounded and $k$-increasing on $\D$ for some $k\geq2$, it is also 1-Lipschitz.  A 1-Lipschitz function has a unique continuous extension to the closure of its definition set. The fact that $\D$ is dense in $\II^n$ makes the unique extension $C'$ defined for all $\x\in \II^n$. Note that $C'$ coincides with the extension $\widehat{C}$ of $C$ defined in \eqref{eq:Chat}.
  However, when proving $k$-increasingness of $\widehat{C}$ in the proof of Proposition~\ref{pr:step5:generalfunctions}, Condition~{\bf S} was not utilized. Thus the same proof can be used here to show that $C'$ is $k$-increasing.
  
  Furthermore, since $C$ is 1-Lipschitz on $\D$, the extension $C'$ also coincides with the extension $\widetilde{C}$ from the proof of Proposition~\ref{pr:step5:generalfunctions}. The proofs that $A(\x)\le \widetilde{C}(\x)$ and $\widehat{C}(\x)\le B(\x)$ for all $\x \in \II^n$ did not require Condition~{\bf S}, hence $A(\x) 
  \le C'(\x)\le B(\x)$.
\end{proof}

\section{Proofs of the main theorems}\label{se:proofofmaintheorem}

Now we can combine our findings to prove our main results, i.e., Theorems~\ref{th:maintheorem-2} and~\ref{th:maintheorem} which were stated in Section~\ref{se:maintheorem}.

\begin{proof}[Proof of Theorem~\textup{\ref{th:maintheorem-2}}]
Let $A,B \colon \II^n \to \II$ be standardized functions with $A \leq B$. Suppose that at least one of the functions $A$ and $B$ satisfies Condition~{\bf S} with a set $S$.

We first show that \textup{(i)} implies \textup{(ii)}. Let $C\colon \II^n \to \II$ be a $k$-increasing function with $A \le C \le B$ and $\dub \in \R_k(\II^n)$.
Then
\begin{align*}
\Lk{A,B}(\dub) &=\sum_{\substack{\y\in\II^n\\m_\dub(\y)>0}} m_\dub(\y)B(\y) + \sum_{\substack{\y\in\II^n\\ m_\dub(\y)<0}} m_\dub(\y)A(\y)\\
&\ge \sum_{\substack{\y\in\II^n\\m_\dub(\y)>0}} m_\dub(\y)C(\y) + \sum_{\substack{\y\in\II^n\\ m_\dub(\y)<0}} m_\dub(\y)C(\y) = L_k^{(C,C)}(\dub) \ge 0.
\end{align*}
To prove that \textup{(ii)} implies \textup{(i)}, suppose $L_k^{(A,B)}(\dub)\ge 0$ for all $\dub\in \R_k(\II^n)$.
Let $\D$ be a dense countably infinite mesh that contains $S^n$. Then $L_k^{(A,B)}(\dub)\ge 0$ for all $\dub\in \R_k(\D)$. By Proposition~\ref{pr:step3A} there exists a function $C \colon \D \to \II$ defined by \eqref{eq:step3A:DefAis} such that $A(\dd) \le C(\dd) \le B(\dd)$ for all $\dd \in \D$.
Since $A$ and $B$ are standardized functions, we have $A(\vv) = B(\vv)$ for all vertices $\vv$ of the unit cube $\II^n$.
Proposition~\ref{pr:step4A} implies that $C$ is $k$-increasing on $\D$.
Hence, by Proposition~\ref{pr:step5:generalfunctions}, function $C$ can be extended to a $k$-increasing function $C \colon \II^n \to \II$ such that $A(\x) \le C(\x) \le B(\x)$ for all $\x \in \II^n$.
\end{proof}

In the proof of Proposition~\ref{pr:step3A} we arranged the elements of $\D$ into a sequence $(\dd_i)_{i \in \NN}$. The order is not important for the proof to work.
Note, however, that the obtained function $C$, constructed from below, satisfies
\begin{equation}\label{eq:Cd1-below}
\begin{split}
C(\dd_1) &=A'(\dd_1)=A(\dd_1)+\gkD^{(A,B)}(\dd_1)=A(\dd_1)+\min\{B(\dd_1)-A(\dd_1),\POkD^{(A,B)}(\dd_1)\}\\[1ex]
&=\min\{B(\dd_1),A(\dd_1)+\POkD^{(A,B)}(\dd_1)\},
\end{split}
\end{equation}
where the function $A'$ is defined as in Proposition~\ref{pr:step1A}.

The proof of Theorem~\ref{th:maintheorem-2} could analogously be done with the use of Propositions~\ref{pr:step3B} and \ref{pr:step4B}, in which case the obtained function $C$, constructed from above, would satisfy
\begin{equation}\label{eq:Cd1-above}
C(\dd_1) =B'(\dd_1)=B(\dd_1)-\dkD^{(A,B)}(\dd_1)=\max\{A(\dd_1),B(\dd_1)-\PMkD^{(A,B)}(\dd_1)\}.
\end{equation}

\begin{proof}[Proof of Theorem~\textup{\ref{th:maintheorem}}]
The proof for $k \ge 2$ is analogous to the proof of Theorem~\ref{th:maintheorem-2} (note that the assumption $k \ge 2$ appears in Proposition~\ref{pr:step5:semicopulas}). The only differences are that we can choose the dense countably infinite mesh $\D$ arbitrarily since we do not have Condition~\textbf{S}, and that we use Proposition~\ref{pr:step5:semicopulas} instead of Proposition~\ref{pr:step5:generalfunctions}.
Note that the obtained function $C$ is a semicopula because it has uniform marginals, a property that it inherits from $A$ and $B$, since
$A \le C \le B$.
If $k=1$ then the proof that \textup{(i)} implies \textup{(ii)} is the same while the opposite direction is trivial since we can take $C=A$.
\end{proof}

\section{Coherence results} \label{se:cogerence}

In the previous sections we were dealing with results related to the avoidance of sure loss. Now we present four consequences concerning coherence (see Definition~\ref{def:avoidance}).
Theorems~\ref{th:coherence-upper-general} and \ref{th:coherence-lower-general} consider the case when the bounds are standardized functions, while Theorems~\ref{th:coherence-upper-semicopulas} and \ref{th:coherence-lower-semicopulas} deal with the case when the bounds are semicopulas. The $k$-coherence for the upper bound is given in Theorems~\ref{th:coherence-upper-general} and \ref{th:coherence-upper-semicopulas}, while the $k$-coherence for the lower bound is considered in Theorems~\ref{th:coherence-lower-general} and \ref{th:coherence-lower-semicopulas}.

\begin{theorem}\label{th:coherence-upper-general}
Let $A,B \colon \II^n \to \II$ be standardized functions with $A \leq B$  and fix some integer $k\in[n]$. 
Suppose that at least one of the functions $A$ and $B$ satisfies Condition~{\bf S} and that  
    $L_k^{(A,B)}(\dub)\ge 0$ for all $\dub\in\R_k(\II^n)$. 
Then the following are equivalent:
\begin{itemize}
    \item[\textup{(i)}] for all $\x\in\II^n: \quad \POk^{(A,B)}(\x)\ge B(\x)-A(\x)$;
    \item[\textup{(ii)}] for all $\x\in\II^n:$
     \begin{equation*}
         B(\x)=\sup\{C(\x)\mid C\colon\II^n\to\II,\ A\le C\le B,\ C\textrm{ is a $k$-increasing standardized function}\}.
     \end{equation*}
\end{itemize}
\end{theorem}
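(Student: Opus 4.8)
The plan is to prove the two implications separately. Throughout, recall that by the standing hypothesis $\Lk{A,B}(\dub)\ge 0$ for all $\dub\in\R_k(\II^n)$, Theorem~\ref{th:maintheorem-2} guarantees that the set of $k$-increasing standardized functions $C$ with $A\le C\le B$ is non-empty; since each such $C$ satisfies $C\le B$, the supremum in~(ii) is automatically at most $B(\x)$, so~(ii) merely asserts that this supremum is attained in the limit.

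For (ii)$\Rightarrow$(i), fix $\x\in\II^n$. If $\x$ is a vertex of $\II^n$, then $A(\x)=B(\x)$ (both functions being standardized) while $\POk^{(A,B)}(\x)\ge 0$ because all $\Lk{A,B}(\dub)$ are non-negative, so~(i) holds at $\x$. Otherwise, pick any $\dub\in\R_k(\II^n)$ with $m_{\dub}(\x)<0$ (such $\dub$ exists by Lemma~\ref{le:KBoxesWithMRFromZ2}) and any admissible $C$ as in~(ii). Expanding $\Lk{C,C}(\dub)=\sum_{\y}m_{\dub}(\y)C(\y)$ and $\Lk{A,B}(\dub)$ and isolating the terms at $\x$, one uses $C\le B$ on the vertices of positive multiplicity and $C\ge A$ on the remaining vertices of negative multiplicity to obtain $\Lk{C,C}(\dub)-m_{\dub}(\x)C(\x)\le\Lk{A,B}(\dub)-m_{\dub}(\x)A(\x)$. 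Since $\Lk{C,C}(\dub)\ge 0$, this rearranges to $\lvert m_{\dub}(\x)\rvert\,(C(\x)-A(\x))\le\Lk{A,B}(\dub)$, hence $C(\x)-A(\x)\le\Lk{A,B}(\dub)/\lvert m_{\dub}(\x)\rvert$ for every such $\dub$, so $C(\x)-A(\x)\le\POk^{(A,B)}(\x)$. Taking the supremum over all admissible $C$ and invoking~(ii) gives $B(\x)-A(\x)\le\POk^{(A,B)}(\x)$.

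For (i)$\Rightarrow$(ii), since~(ii) may use a different $C$ at each point, it suffices to produce, for each fixed $\x_0\in\II^n$, one $k$-increasing standardized $C$ with $A\le C\le B$ and $C(\x_0)=B(\x_0)$. Choose a dense countably infinite mesh $\D=\prod_{i}\delta_i$ with $\{0,1\}\cup S\cup\{(\x_0)_i\}\subseteq\delta_i$ for each $i$ (possible since $S$ is countable), so that $\x_0\in\D$, $S^n\subseteq\D$, and, since $\R_k(\D)\subseteq\R_k(\II^n)$, also $\Lk{A,B}(\dub)\ge 0$ for all $\dub\in\R_k(\D)$. Run the point-by-point construction from below of Proposition~\ref{pr:step3A}, ordering $\D$ so that $\dd_1=\x_0$; by~\eqref{eq:Cd1-below} the resulting function $C$ satisfies $C(\x_0)=\min\{B(\x_0),\,A(\x_0)+\POkD^{(A,B)}(\x_0)\}$. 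The crucial observation is that $\POkD^{(A,B)}(\x_0)\ge\POk^{(A,B)}(\x_0)$, since $\POkD^{(A,B)}(\x_0)$ is an infimum over the smaller family $\R_k(\D)$; combined with hypothesis~(i) this yields $A(\x_0)+\POkD^{(A,B)}(\x_0)\ge A(\x_0)+(B(\x_0)-A(\x_0))=B(\x_0)$, hence $C(\x_0)=B(\x_0)$.

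It remains to pass from $\D$ to $\II^n$. Since $A,B$ are standardized, $A(\vv)=B(\vv)$ at every vertex $\vv$ of $\II^n$, so Proposition~\ref{pr:step4A} applies and $C$ is $k$-increasing on $\D$; Proposition~\ref{pr:step5:generalfunctions} then extends it to a $k$-increasing $\widehat{C}\colon\II^n\to\II$ with $A\le\widehat{C}\le B$, and $\widehat{C}(\x_0)=C(\x_0)=B(\x_0)$ because $\widehat{C}$ extends $C$ and $\x_0\in\D$. Squeezed between the standardized functions $A$ and $B$, the function $\widehat{C}$ is grounded and satisfies $\widehat{C}(\mathbf{1})=1$, and, being grounded and $k$-increasing, it is $1$-increasing; thus $\widehat{C}$ is standardized. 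Therefore $B(\x_0)\ge\sup\{C(\x_0)\mid\dots\}\ge\widehat{C}(\x_0)=B(\x_0)$, which is~(ii) at $\x_0$. The main obstacle is the (i)$\Rightarrow$(ii) direction: one must realize that restricting the disjoint unions of $k$-boxes to the mesh only \emph{increases} $\POk^{(A,B)}$, so hypothesis~(i) survives on $\D$, and combine this with the precise value of the first constructed point recorded in~\eqref{eq:Cd1-below}; the careful bookkeeping that isolates the $\x$-contribution in the two $\Lk{\cdot}$-sums is the only other delicate point.
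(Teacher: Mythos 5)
Your proof is correct and follows essentially the same route as the paper. The (i)$\Rightarrow$(ii) direction is identical: fix $\x_0$, choose a mesh $\D$ containing $\x_0$ and $S^n$, run the construction of Proposition~\ref{pr:step3A} with $\dd_1=\x_0$, use \eqref{eq:Cd1-below} together with $\POkD^{(A,B)}(\x_0)\ge\POk^{(A,B)}(\x_0)\ge B(\x_0)-A(\x_0)$, and extend via Propositions~\ref{pr:step4A} and~\ref{pr:step5:generalfunctions}. For (ii)$\Rightarrow$(i) you isolate the $\x$-contribution in $\Lk{A,B}(\dub)$ directly and take the supremum over admissible $C$ at the end, whereas the paper routes the same computation through $\POk^{(A,C)}$ and an $\varepsilon$-argument; this is a cosmetic reformulation of the same estimate, not a different idea.
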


\begin{proof} (i) $\Rightarrow$ (ii):
Assume that condition~\textup{(i)} holds and fix some $\x\in\II^n$. Choose a dense countable mesh $\D\subset\II^n$ such that $\x\in \D$ and $S^n \subseteq \D$. Note that for all $\dd\in\D$ we have
\begin{equation*}
\POk^{(A,B)}(\dd)=\inf_{\substack{\dub\in\R_k(\II^n),\\ m_\dub(\dd)<0}} \frac{L_k^{(A,B)}(\dub)}{|m_\dub(\dd)|}\le \inf_{\substack{\dub\in\R_k(\D),\\ m_\dub(\dd)<0}} \frac{L_k^{(A,B)}(\dub)}{|m_\dub(\dd)|}=\POkD^{(A,B)}(\dd).
\end{equation*}
We repeat the proof from Theorem~\ref{th:maintheorem-2} by choosing $\dd_1=\x$ in the proof of Proposition~\ref{pr:step3A}. By Equation~\eqref{eq:Cd1-below} we get $C(\x)=B(\x)$ since $\POkD^{(A,B)}(\x) \ge \POk^{(A,B)}(\x)$ by the above and $\POk^{(A,B)}(\x)\ge B(\x)-A(\x)$ by assumption. Doing this for all $\x\in\II^n$ gives us condition~\textup{(ii)}, because any $k$-increasing function between the standardized functions $A$ and $B$ is automatically standardized.

(ii) $\Rightarrow$ (i): Now assume that condition~\textup{(ii)} holds. Fix some $\x\in\II^n$ and some $\varepsilon>0$. By condition~\textup{(ii)} there is a $k$-increasing standardized function $C\colon\II^n\to\II$ such that $A \le C \le B$ and $B(\x)-\varepsilon<C(\x)$. Then
$\POk^{(A,B)}(\x)\ge \POk^{(A,C)}(\x)$ 
and, for any $\dub\in\R_k(\II^n)$ with $m_\dub(\x)<0$, we have
\begin{align*}
L_k^{(A,C)}(\dub)&=\underbrace{L_k^{(C,C)}(\dub)}_{\ge 0}+\sum_{\substack{\y\in\II^n\\m_\dub(\y)<0}}\underbrace{m_\dub(\y)(A(\y)-C(\y))}_{\ge 0}
\ge m_\dub(\x)(A(\x)-C(\x)).
\end{align*}
Hence, we get
\begin{equation*}
    \POk^{(A,C)}(\x)=\inf_{\substack{\dub\in\R_k(\II^n)\\m_\dub(\x)<0}}\frac{L_k^{(A,C)}(\dub)}{|m_\dub(\x)|}
    \ge -(A(\x)-C(\x))
    > B(\x)-A(\x)-\varepsilon
\end{equation*}
and, therefore,
$\POk^{(A,B)}(\x)\ge B(\x)-A(\x)$.    
\end{proof}

When the bounds are semicopulas, Condition \textbf{S} can be omitted and the equivalent condition (\textup{i}) can be given as an equality.

\begin{theorem}\label{th:coherence-upper-semicopulas}
Let $A,B\colon\II^n\to\II$ be semicopulas with $A\le B$ and fix some integer $k\in[n]\setminus\{1\}$. 
Assume that  
    $L_k^{(A,B)}(\dub)\ge 0$ for all $\dub\in\R_k(\II^n)$. 
Then the following are equivalent:
\begin{itemize}
    \item[\textup{(i)}] for all $\x\in\II^n: \quad \POk^{(A,B)}(\x) = B(\x)-A(\x)$;
    \item[\textup{(ii)}] for all $\x\in\II^n:$
     \begin{equation*}
         B(\x)=\sup\{C(\x)\mid C\colon\II^n\to\II,\ A\le C\le B,\ C\textrm{ is a $k$-increasing semicopula}\};
     \end{equation*}
\end{itemize}
Each of the conditions \textup{(i)} and \textup{(ii)} implies that $B$ is a quasi-copula.
\end{theorem}

\begin{proof}
Assume that the condition~\textup{(i)} holds.
It trivially follows that
\begin{equation} \label{eq:8.1}
 \textrm{for all} \ \x\in\II^n: \quad \POk^{(A,B)}(\x) \ge B(\x)-A(\x).
\end{equation}
We prove the condition~\textup{(ii)} in the same way as in Theorem~\ref{th:coherence-upper-general}, but with the use of Theorem~\ref{th:maintheorem} instead of Theorem~\ref{th:maintheorem-2}.

Now, assume that condition~\textup{(ii)} holds. 
We verify condition~\eqref{eq:8.1}, again, in the same way as in Theorem~\ref{th:coherence-upper-general}.
Every $k$-increasing semicopula $C$ is also $2$-increasing and thus 1-Lipschitz. By \textup{(ii)} $B$ is a supremum of 1-Lipschitz functions, so it is 1-Lipschitz and thus continuous. By Lemma~\ref{lem:cont} it holds that $\POk^{(A,B)}(\x)\le B(\x)-A(\x)$. Together with \eqref{eq:8.1}, condition~\textup{(i)}  follows.
\end{proof}

The proofs of the following two theorems rely on the proofs of Theorems~\ref{th:maintheorem-2} and \ref{th:maintheorem} by constructing~$C$ from above, replacing the function $\gkD^{(A,B)}$ by $\dkD^{(A,B)}$ and using Equation~\eqref{eq:Cd1-above} instead of Equation~\eqref{eq:Cd1-below} (see also Section~\ref{se:CbyloweringB_short}).

\begin{theorem}\label{th:coherence-lower-general}
Let $A,B \colon \II^n \to \II$ be standardized functions with $A \leq B$  and fix some integer $k\in[n]$. 
Suppose that at least one of the functions $A$ and $B$ satisfies Condition~{\bf S} and that  
    $L_k^{(A,B)}(\dub)\ge 0$ for all $\dub\in\R_k(\II^n)$. 
Then the following are equivalent:
\begin{itemize}
    \item[\textup{(i)}] for all $\x\in\II^n: \quad \PMk^{(A,B)}(\x)\ge B(\x)-A(\x)$;
    \item[\textup{(ii)}] for all $\x\in\II^n\colon$
     \begin{equation*}
         A(\x)=\inf\{C(\x)\mid C\colon\II^n\to\II,\ A\le C\le B,\ C\textrm{ is a $k$-increasing standardized function}\}.
     \end{equation*}
\end{itemize}
\end{theorem}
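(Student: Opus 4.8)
The plan is to follow the proof of Theorem~\ref{th:coherence-upper-general} almost verbatim, but to work with the construction of $C$ \emph{from above} in place of the construction from below, i.e., to use Propositions~\ref{pr:step3B} and~\ref{pr:step4B} instead of Propositions~\ref{pr:step3A} and~\ref{pr:step4A}, the functions $\dkD^{(A,B)}$ and $\PMkD^{(A,B)}$ instead of $\gkD^{(A,B)}$ and $\POkD^{(A,B)}$, and Equation~\eqref{eq:Cd1-above} instead of Equation~\eqref{eq:Cd1-below}, exactly as announced before the statement. Throughout, one uses that standardized functions agree at every vertex of $\II^n$ (groundedness forces the value $0$ at every vertex with a zero coordinate, and $A(\1)=B(\1)=1$), so that the hypothesis $A(\vv)=B(\vv)$ needed in Propositions~\ref{pr:step3B}--\ref{pr:step4B} is automatic, and so that at a vertex $\x$ of $\II^n$ both assertions~\textup{(i)} and~\textup{(ii)} hold trivially; in particular the empty-infimum convention only matters at such vertices, where $B(\x)-A(\x)=0\le\PMk^{(A,B)}(\x)$.

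For \textup{(i)} $\Rightarrow$ \textup{(ii)}: fix $\x\in\II^n$ and choose a dense countably infinite mesh $\D\subset\II^n$ with $\x\in\D$ and $S^n\subseteq\D$. Since $\R_k(\D)\subseteq\R_k(\II^n)$, we have $\PMkD^{(A,B)}(\x)\ge\PMk^{(A,B)}(\x)\ge B(\x)-A(\x)$ by hypothesis~\textup{(i)}. Rerunning the "from above" variant of the proof of Theorem~\ref{th:maintheorem-2}, with the elements of $\D$ enumerated so that $\dd_1=\x$, Equation~\eqref{eq:Cd1-above} shows that the resulting $k$-increasing function $C$ on $\D$ satisfies
\[
C(\x)=\max\bigl\{A(\x),\,B(\x)-\PMkD^{(A,B)}(\x)\bigr\}=A(\x);
\]
extending $C$ to $\II^n$ via Proposition~\ref{pr:step5:generalfunctions} preserves this value because $\x\in\D$. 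As any $k$-increasing function squeezed between the standardized functions $A$ and $B$ is itself standardized, this exhibits a competitor attaining the value $A(\x)$, and since every competitor $C$ satisfies $C(\x)\ge A(\x)$, the infimum in~\textup{(ii)} equals $A(\x)$. Repeating this for all $\x$ gives~\textup{(ii)}.

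For \textup{(ii)} $\Rightarrow$ \textup{(i)}: fix $\x\in\II^n$ and $\varepsilon>0$, and pick by~\textup{(ii)} a $k$-increasing standardized function $C$ with $A\le C\le B$ and $C(\x)<A(\x)+\varepsilon$. Because $A\le C$, the only terms in which $\LkAB(\dub)$ and $\LkCB(\dub)$ differ are those with $m_\dub(\y)<0$, where $m_\dub(\y)A(\y)\ge m_\dub(\y)C(\y)$; hence $\LkAB(\dub)\ge\LkCB(\dub)$ for every $\dub\in\R_k(\II^n)$, so $\PMk^{(A,B)}(\x)\ge\PMk^{(C,B)}(\x)$. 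For any $\dub\in\R_k(\II^n)$ with $m_\dub(\x)>0$,
\[
\LkCB(\dub)=\Lk{C,C}(\dub)+\sum_{\substack{\y\in\II^n\\ m_\dub(\y)>0}}m_\dub(\y)\bigl(B(\y)-C(\y)\bigr)\ge m_\dub(\x)\bigl(B(\x)-C(\x)\bigr),
\]
using $\Lk{C,C}(\dub)\ge 0$ ($C$ being $k$-increasing) and $B\ge C$; dividing by $|m_\dub(\x)|$ gives $\LkCB(\dub)/|m_\dub(\x)|\ge B(\x)-C(\x)>B(\x)-A(\x)-\varepsilon$. Taking the infimum over all such $\dub$ yields $\PMk^{(C,B)}(\x)\ge B(\x)-A(\x)-\varepsilon$, hence $\PMk^{(A,B)}(\x)\ge B(\x)-A(\x)-\varepsilon$, and letting $\varepsilon\to 0$ gives~\textup{(i)}. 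I do not expect any genuine obstacle: the whole argument is a direct dualization of Theorem~\ref{th:coherence-upper-general}. The only points requiring care are tracking which monotonicity inequalities reverse because one now pushes $B$ down toward $A$ rather than $A$ up toward $B$, and checking that the "from above" auxiliary results (Propositions~\ref{pr:step3B}, \ref{pr:step4B}, and~\ref{pr:step5:generalfunctions}) apply verbatim, in particular that their vertex hypothesis $A(\vv)=B(\vv)$ holds automatically for standardized functions.
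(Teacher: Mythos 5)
Your proposal is correct and follows exactly the approach the paper itself indicates: it dualizes the proof of Theorem~\ref{th:coherence-upper-general} by constructing $C$ from above via Propositions~\ref{pr:step3B} and~\ref{pr:step4B}, Equation~\eqref{eq:Cd1-above}, and $\dkD^{(A,B)}$, $\PMkD^{(A,B)}$ in place of their from-below counterparts. All the reversed monotonicity inequalities check out (in particular $\LkAB\ge\LkCB$ from $A\le C$, and $\LkCB(\dub)\ge m_\dub(\x)(B(\x)-C(\x))$ for $m_\dub(\x)>0$), and your observation that the vertex hypothesis $A(\vv)=B(\vv)$ is automatic for standardized functions correctly justifies the applicability of the auxiliary propositions, which the paper uses implicitly.
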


\begin{theorem}\label{th:coherence-lower-semicopulas}
Let $A,B\colon\II^n\to\II$ be semicopulas with $A\le B$ and fix some integer $k\in[n]\setminus\{1\}$. 
Assume that  
    $L_k^{(A,B)}(\dub)\ge 0$ for all $\dub\in\R_k(\II^n)$. 
Then the following are equivalent:
\begin{itemize}
    \item[\textup{(i)}] for all $\x\in\II^n: \quad \PMk^{(A,B)}(\x)= B(\x)-A(\x)$;
    \item[\textup{(ii)}] for all $\x\in\II^n:$
     \begin{equation*}
         A(\x)=\inf\{C(\x)\mid C\colon\II^n\to\II,\ A\le C\le B,\ C\textrm{ is a $k$-increasing semicopula}\};
     \end{equation*}
\end{itemize}
Each of the conditions \textup{(i)} and \textup{(ii)} implies that $A$ is a quasi-copula.
\end{theorem}

\section*{Concluding remarks}

We discuss $k$-increasing $n$-variate standardized functions, semicopulas and quasi-copulas. The case of $k=n$ refers to the characterization of $n$-variate copulas and the problem of relating a true copula to an imprecise copula has been solved in~\cite{OmlSto20a}. In~\cite{AriDeB19,AriMesDeB17,AriMesDeB20,MueSca00} several aspects of $k$-increasing $n$-quasi-copulas have been discussed, in particular for $k=2$ covering the case of supermodularity. One of our aims has been to further reduce the conditions imposed on the functions we start with. Thus we work with a larger class of functions but still obtain results on their avoidance of sure loss, i.e., provide a characterization of the existence of a $k$-increasing $n$-variate standardized function and of a semicopula. Furthermore, we also show coherence results in this general setting. 
We expect that our results contribute to a deeper understanding of probability and imprecise probabilities. 

\section*{Acknowledgments}

The support by the WTZ AT-SLO grant SI 12/2020 of the OeAD (Austrian Agency for International Cooperation in Education and Research) and grant  BI-AT/20-21-009 of the ARIS (Slovenian Research and Innovation Agency) is gratefully acknowledged. 
Damjana Kokol Bukovšek, Blaž Mojškerc, and Nik Stopar acknowledge financial support from the ARIS (Slovenian Research and Innovation Agency, research core funding No. P1-0222).

\end{document}